\definecolor{lime}{HTML}{A6CE39}
\DeclareRobustCommand{\orcidicon}{%
	\begin{tikzpicture}
	\draw[lime, fill=lime] (0,0) 
	circle [radius=0.16] 
	node[white] {{\fontfamily{qag}\selectfont \tiny ID}};
	\draw[white, fill=white] (-0.0625,0.095) 
	circle [radius=0.007];
	\end{tikzpicture}
	\hspace{-2mm}
}
\xdef\csname orcid\x\endcsname{\noexpand\href{https://orcid.org/\csname orcidauthor\x\endcsname}{\noexpand\orcidicon}}
\theoremstyle{plain}
\newtheorem{theorem}{Theorem}[section]
\newtheorem{lemma}[theorem]{Lemma}
\newtheorem{proposition}[theorem]{Proposition}
\theoremstyle{definition}
\newtheorem{definition}[theorem]{Definition}
\theoremstyle{remark}
\newtheorem{remark}{Remark}
\newtheorem{assumption}{Assumption}
\begin{document}


\title{Global minimization of a minimum \\of a finite collection of functions}

\author{
\name{Guillaume Van Dessel\textsuperscript{a}\orcidA{}\thanks{CONTACT Guillaume Van Dessel. Email: guillaume.vandessel@uclouvain.be} and Fran\c{c}ois Glineur\textsuperscript{a,b}\orcidB{}}
\affil{\textsuperscript{a}UCLouvain, ICTEAM (INMA), 4 Avenue Georges Lema\^{\i}tre, Louvain-la-Neuve, BE; \\\textsuperscript{b}UCLouvain, CORE, 34
Voie du Roman Pays, Louvain-la-Neuve, BE}
}

\maketitle

\begin{abstract}
We consider the global minimization of a particular type of minimum structured optimization problems wherein the variables must belong to some basic set, the feasible domain is described by the intersection of a large number of functional \emph{constraints} and the objective stems as the pointwise minimum of a collection of functional \emph{pieces}. Among others, this setting includes all non-convex piecewise linear problems. The global minimum of a minimum structured problem can be computed using a simple enumeration scheme by solving a sequence of individual problems involving each a single \emph{piece} and then taking the smallest individual minimum. 

We propose a new algorithm, called Upper-Lower Optimization (\texttt{ULO}), tackling problems from the aforementioned class. Our method does not require the solution of every individual problem listed in the baseline enumeration scheme, yielding potential substantial computational gains. It alternates between (a) local-search steps, which minimize upper models, and (b) suggestion steps, which optimize lower models. We prove that \texttt{ULO} can return a solution of any prescribed global optimality accuracy. According to a computational complexity model based on the cost of solving the individual problems, we analyze in which practical scenarios \texttt{ULO} is expected to outperform the baseline enumeration scheme. Finally, we empirically validate our approach on a set of piecewise linear programs (PL) of incremental difficulty.

\end{abstract}

\begin{keywords}
non-convex optimization, global optimization, improve and suggest, minimum structured optimization, piecewise linear problems
\end{keywords}

\vspace{-20pt}
\section{Introduction}
\label{intro_notes}

        Let $\mathcal{X}$ denote a subset of $\mathbb{R}^d$. We consider a collection of $n \in \mathbb{N}$ continuous objective \emph{pieces} $\big\{f^{(i)}\big\}_{i \in [n]}$
        and a collection of $m \in \mathbb{N}$ functional \emph{constraints} $\big\{c^{(j)}\big\}_{j \in [m]}$.\\ We are concerned with \textit{minimum structured} (MS) optimization problems of the form
\begin{equation}
\begin{aligned}
F^* := \min_{x \,\in \,\mathcal{X}} \quad &  \min_{i=1,\dots,n}\,f^{(i)}(x)\label{eq:min_problem}
 \\
\textrm{s.t.} \quad & c^{(j)}(x) \leq 0 & \forall j \in [m]  \\ 
\end{aligned}
\tag{MS}
\end{equation}
for which we assume that the optimal value is finite, i.e. $-\infty<F^*<\infty$.  \\
The combinatorial nature of \eqref{eq:min_problem} in terms of both dimensions $(n,m)$ suggests the following two notations. For any subset $S \subseteq [m]$, we introduce
\begin{equation} 
\mathcal{D}^{(S)} := \Big\{x\in \mathbb{R}^d\,\Big|\,c^{(j)}(x) \leq 0 \quad \forall j \in S\bigg\},
\label{eq:domain_implicit} 
\end{equation}
the implicit domain defined by the intersection of the constraints indexed in $S$. \\Then, building upon this notation, we define for every $H\subseteq [n]$ the function
\begin{equation} 
x \to F^{(H,S)}(x) := \begin{cases} \min_{i \in H}\,f^{(i)}(x)& x \in \mathcal{D}^{(S)}\\ \infty & x \not \in \mathcal{D}^{(S)}. \end{cases} 
\label{eq:small_F} 
\end{equation}
$F^{(H,S)}$ represents a simplified version of the overall objective $F$ where, instead of considering the minimum over all the $n$ \emph{pieces} while satisfying each of the $m$ \emph{constraints}, one only cares about the minimum over \emph{pieces} indexed in $H$ while only taking into account \emph{constraints} in $S$.  
We can now rewrite more compactly,
\begin{equation} F^* = \min_{x \,\in\,\mathcal{X}}\,F(x):=F^{([n],[m])}(x).\label{eq:min_problem_equiv}
\end{equation}
Although non-convex and intrinsically hard, the \eqref{eq:min_problem} framework allows for building successive global approximations that are either upper models or lower models of the objective function. We will assume throughout this paper the availability of an oracle that can perform exact global minimization of any the aforementioned models $F^{(H,S)}$. More specifically, the oracle can solve any problem 
\begin{equation}
\nu(H,S) := \min_{x \in \mathcal{X}}\,F^{(H,S)}(x).
\label{eq:small_min_problem} \tag{Oracle}
\end{equation} 
Note that it is sufficient to assume a black-box computing for any $i \in [n]$, 
\begin{equation}
\nu(\{i\},S) = \min_{x \in \mathcal{X}}\,f^{(i)}(x) \quad \text{s.t.}\quad \max_{j \in S}\,c^{(j)}(x) \leq 0
\label{eq:small_min_problem_i}
\end{equation}
since it trivially implies $\nu(H,S) = \min_{i \in H}\,\nu(\{i\},S)$.
Obviously, calculating $F^*$ can be achieved by calling $n$ times this black-box as in  
\begin{equation}
F^* = \min_{i \in [n]}\,\underbrace{\nu(\{i\},[m])}_{\,:=\,\nu^{(i)}}.
\label{eq:enumeration_scheme_ref} 
\end{equation}
In the absence of any further simplification, the time complexity of the above \emph{enumeration} strategy \eqref{eq:enumeration_scheme_ref} to solve \eqref{eq:min_problem} inevitably scales linearly with $n$. This might become prohibitive if $n$ is big and the efforts taken by the black-box to obtain values $\nu^{(i)}$ for every $i \in [n]$ are not negligible. For instance, when \emph{pieces} belong to the same class and so do \emph{functional constraints}, every black-box call is expected to spend a computational time that essentially depends on the number of constraints involved, i.e.\@ $|S|$. In the practical context of conic programming \cite{Nemi08}, this usually amounts to a polynomial of $d$ and $|S|$ with a leading term of order $\text{poly}(d)\cdot|S|^r$ for a suitable $r >0$,\,e.g. $r \in \{\frac{1}{2},\frac{3}{2}\}$. 
\\ \\
Naturally, one may question the possibility of avoiding the $n$ black-box calls needed to get all the $\nu^{(i)}$'s while sticking to a global optimization approach for \eqref{eq:min_problem}. In this work, our aim is to satisfy the following demand:

\begin{center}
    \textit{Can we exploit the structure in \eqref{eq:min_problem} to devise an exact algorithm that can certify (approximate) global optimality while requiring in practice much fewer than $n$ computations $\big\{\nu^{(i)}\big\}_{i\in[n]}$ required in the baseline enumeration strategy ?}
\end{center}
We answer this question positively by proposing a new algorithm dubbed \texttt{ULO} (Upper-Lower Optimization) that alternates between (a) local-search steps, which minimize upper models, and (b) suggestion steps, which optimize lower models of $F$. By carefully choosing which upper models to optimize in step (a), we can ensure the \emph{local optimality} of our final output iterate. On the other hand, within our step (b), the lower models optimizations either improve global certifications about the candidates and/or suggest a new restart for step (a). In order to illustrate our framework, we detail a first example for which we point out which functions are the \emph{pieces} and which are the \emph{constraints}. 

\example[Optimistic Robust LP] \label{example:orlp} A nominal \emph{optimistic} program (see \cite{Sinha16} for more examples) reads as follows. Suppose one can choose an option $(\beta,\gamma) \in \mathbb{R}^p \times \mathbb{R}$ among a dataset $\{(\beta^{(i)},\gamma^{(i)})\}_{i\in[n]}$ of possibilities. Then, depending on the retained option, one optimizes an affine cost $\langle \beta,u\rangle + \gamma$ over a polyhedral set of feasible actions $\mathcal{U} \subseteq \mathbb{R}^p$. Given a positive semidefinite matrix $\mathbf{B}$ and an allowance $R>0$, one also needs to satisfy a quadratic budget constraint $\langle \mathbf{B} u,u\rangle \leq R$. As such, the nominal problem, already nonconvex in general, boils down to
\begin{equation}
\min_{i \,\in\,[n]}\,\min_{u \,\in \,\mathcal{U}}\, \langle \beta^{(i)},u\rangle+\gamma^{(i)} \quad\text{s.t.} \quad \langle\mathbf{B}u,\,u\rangle \leq R.
\label{eq:nominal}
\end{equation}
One can tackle an even more challenging problem. Like in \cite{Bertsimas09} (Section 2), we take into account possible implementation errors $\Delta_u \in \mathbb{R}^p$, i.e.\@ instead of the chosen $u$ vector, $u+\Delta_u$ is implemented so that the budget constraint is eventually exceeded. Let $\bar{\alpha}>0$ describe the radius of maximal $\infty$-norm perturbation so that $\Delta_u \sim \text{Uni}(\mathbb{B}_{\infty}(\mathbf{0}_p\,;\,\bar{\alpha}))$.\\We would like the budget constraint to be satisfied in any case while optimizing the expected perturbed cost. 
At any fixed $(\beta,\gamma)$, the expected value of the objective, i.e. $\mathbb{E}_{\Delta_u}[\langle \beta, u+\Delta_u\rangle+\gamma]$, still equals $\langle \beta,u\rangle + \gamma$. The robust counterpart of \eqref{eq:nominal} becomes 
\begin{equation}\min_{i \,\in\,[n]}\,\min_{u \,\in \,\mathcal{U}}\, \langle \beta^{(i)},u\rangle+\gamma^{(i)} \quad\text{s.t.} \quad \max_{||\Delta_u||_\infty \,\leq \,\bar{\alpha}}\langle \mathbf{B}(u+\Delta_u),\,(u+\Delta_u) \rangle \leq R. \label{eq:first_ref} \end{equation}
At fixed $u \in \mathcal{U}$, \cite{Aras22} proposes a sophisticated adjustable robust optimization (ARO) mixed-integer convex reformulation of the right-hand side of \eqref{eq:first_ref} to compute the perturbation $\Delta_u^*(u)$ effectively achieving the worst-case scenario in terms of budget. \\Nevertheless, since $u$ itself is a variable, bilinear terms $\langle \mathbf{B}u,\Delta_u\rangle$ together with possible high dimensionality $p$ prevent from establishing an efficient convex reformulation in $(u,\Delta_u)$ of the robust budget constraint. To keep things tractable, one possible approximation of \eqref{eq:first_ref} is to sample $m \in \mathbb{N}$ different scenarios $\{\Delta_u^{(j)}\}_{j\in [m]}$ among the $2^p$ vertices (at least one of these achieves the worst perturbation) of $\mathbb{B}_\infty(\mathbf{0}_p\,;\,\bar{\alpha})$, and to require
$$\max_{j\,\in\,[m]}\,\langle \mathbf{B}(u+\Delta_u^{(j)}),\,(u+\Delta_u^{(j)}) \rangle =\max_{j\,\in\,[m]}\, \langle \mathbf{B} u,u\rangle + \langle 2 \mathbf{B} \Delta_u^{(j)},u\rangle + \langle \mathbf{B} \Delta_u^{(j)}, \Delta_u^{(j)}\rangle\leq R.$$ 
Note that for every $m \leq 2^p$ (equality ensured if $m=2^p$) and every $u \in \mathbb{R}^p$,$$\max_{j\,\in\,[m]}\langle \mathbf{B}(u+\Delta_u^{(j)}),\,(u+\Delta_u^{(j)}) \rangle\leq\max_{||\Delta_u||_\infty \,\leq \,\bar{\alpha}}\langle \mathbf{B}(u+\Delta_u),\,(u+\Delta_u) \rangle.$$
Hence, the following \eqref{eq:min_problem} problem with the extra variable $\eta \geq \langle \mathbf{B}u,u\rangle $,
\begin{equation}
\min_{u \,\in\,\mathcal{U},\,\langle \mathbf{B}u,u\rangle \leq \eta}\,\min_{i \,\in\, [n]}\,\langle \beta^{(i)},u\rangle+\gamma^{(i)}\hspace{2pt}:\hspace{2pt}\max_{j \in [m]} \eta + \langle 2 \mathbf{B} \Delta_u^{(j)},u\rangle + \langle \mathbf{B} \Delta_u^{(j)}, \Delta_u^{(j)}\rangle -R \leq 0.
\label{eq:example1}
\end{equation}
\begin{table}[ht] 
\centering
\renewcommand*{\arraystretch}{1.23}
\begin{tabular}{|c||c|c|}
\hline
   \textbf{elements} & expression & \textit{typical} structure \\
    \hline
    $d$ & $p+1$ & $10^2 \to 10^3$ \\
    \hline
    $\mathcal{X}$ & $\{(u,\eta) \in \mathcal{U} \times \mathbb{R}\,|\,\langle \mathbf{B}u,u\rangle \leq \eta\}$ & polyhedron $\cap$ second-order cone\\
    \hline
    $f^{(i)}$ & $\langle \beta^{(i)},u\rangle+\gamma^{(i)}$ &  \multirow{2}{*}{linear in $u$}\\
    $c^{(j)}$ & $ 
 \eta + \langle 2 \mathbf{B} \Delta_u^{(j)},u\rangle +\langle \mathbf{B}\Delta_u^{(j)},\Delta_u^{(j)}\rangle -R$ & \\
    \hline
\end{tabular}
\vspace{5pt}
\caption{Optimistic Robust LP translated as an instance of \eqref{eq:min_problem}.}
\label{tab:orlp}
\end{table}

\subsection{Related work}
\label{rel_work}
The class \eqref{eq:min_problem} includes many problems of interest. For example, 
in \cite{Pansari17} authors study the minimization of a sum of \emph{$\lambda$-truncated} (or \emph{clipped} \cite{Liu18}) convex functions like 
\begin{equation}
F^* = \min_{x \in \mathbb{R}^d}\, \frac{1}{N}\,\sum_{s=1}^{N}\,\min\{h_1^{(s)}(x),\lambda^{(s)}\}
\label{eq:truncated_cvx}
\end{equation}
where for each $s\in [N]$, $h_1^{(s)} : \mathbb{R}^d \to \mathbb{R}$ represents a continuous convex \emph{component loss} whose value can not exceed a fixed threshold $\lambda^{(s)} \in \mathbb{R}$. We can cast  \eqref{eq:truncated_cvx} as an instance of \eqref{eq:min_problem} where the set of \emph{functional constraints} is empty ($m=0$) and the basic set $\mathcal{X} = \mathbb{R}^d$. Since there exists $n=2^N$ ways to select a subset $I \subseteq [N]$, one can link every $i \in \{1,\dots,2^N\}$ to a single $I^{(i)}\subseteq[N]$, itself assigned to a convex \emph{piece} $f^{(i)}$ defined as 
$$x \to f^{(i)}(x) =  \frac{1}{N}\,\Bigg(\sum_{s \in I^{(i)}}\,h_1^{(s)}(x) + \sum_{s \in [N]\backslash I^{(i)}}\,\lambda^{(s)}\Bigg).$$
More recently, \cite{Zuo23} extends \eqref{eq:truncated_cvx} by replacing the constant $\lambda^{(s)}$ by convex functions $h_2^{(s)}$.\\ Actually, any problem that fits the structure of sum of minimum-of-convex (SMC) programming \eqref{eq:smc_ex}, can be seen as a \eqref{eq:min_problem}. (SMC) reads as follows. For every $s\in[N]$, one considers $n_s \in \mathbb{N}$ continuous convex functions $h^{(s)}_l$ for every $l \in [n_s]$. Thenceforth, a possible extension of \eqref{eq:truncated_cvx} stems as 
\begin{equation}
F^* = \min_{x \in \mathbb{R}^d}\, \frac{1}{N}\,\sum_{s=1}^{N}\,\min_{l\,\in\,[n_s]} \,h_l^{(s)}(x).
\label{eq:smc_ex}
\end{equation}
Notably, generalized clustering \cite{Ding24} with $n_s = B$ for every $s\in [N]$, $x=(x^{(1)},\dots,x^{(B)})$, $h^{(s)}_l(x) = h(x^{(l)},\bar{x}^{(s)})$ and $h(\cdot,\bar{x}^{(s)})$  continuous convex (e.g. $h(x,\bar{x}^{(s)})=||x-\bar{x}^{(s)}||_2^2$), falls into this (SMC) category. One proceeds to the \eqref{eq:min_problem} reformulation of \eqref{eq:smc_ex} by simply choosing that every
\emph{piece} $f^{(i)}$ is constructed based upon an unique selection $\sigma=(\sigma_1,\dots,\sigma_N) \in \bigtimes_{s=1}^{N}\,[n_s]$ of $h^{(s)}_l$ terms as in 
\begin{equation}
x \to f^{(i)}(x) =  \frac{1}{N}\,\sum_{s=1}^N\,h^{(s)}_{\sigma_s}(x).
\label{eq:SMC}
\end{equation}
    About \eqref{eq:SMC}, one counts $n = \Pi_{s=1}^{N}\,n_s$ different $\sigma$-selections and thereby as many \emph{pieces} regarding our \eqref{eq:min_problem} framework, generalizing the case \eqref{eq:truncated_cvx} for which $n_s=2$ for every $s \in[N]$ and thus $n=2^N$. Again, the number $n$ grows exponentially with $N$. Then, solving (SMC) using the baseline \emph{enumeration} strategy \eqref{eq:enumeration_scheme_ref} amounts to solve a finite yet exponential number of convex problems, out of scope for a large $N$. To the best of our knowledge, a single paper \cite{Liu18} managed to propose an exact algorithm to solve \eqref{eq:truncated_cvx} while only computing the optimal value of a polynomial number of convex problems. \\In some quite specific situations and in a low dimensional setting ($d \in \{1,2\}$), \cite{Liu18} shows that it is possible to efficiently segment the basic feasible set $\mathcal{X}$ in $\mathcal{O}(N^d)$ distinct regions; each of which one is aware of the unique \emph{piece} $f^{(i^*)}$ achieving the minimum among the $n$ \emph{pieces}. Hence, scanning through all these $\mathcal{O}(N^d)$ regions one at the time and optimizing its associated active \emph{piece} will solve \eqref{eq:truncated_cvx} with certainty. However, when $d>2$, the exponential complexity of an \emph{enumeration} strategy when tackling \eqref{eq:truncated_cvx} and, by extension, \eqref{eq:SMC}, seems unavoidable. Actually, as noted by \cite{Barratt20}, a particular instance of \eqref{eq:truncated_cvx} resides in the subset sum problem (SSP), proven to be NP-complete. \\
    
    \noindent Thereby, local-search heuristics are employed to tackle \eqref{eq:truncated_cvx} in \cite{Sinha16,Barratt20} without global guarantees in the general case. That being stated, under mild additional assumptions, authors in \cite{Barratt20} propose an equivalent mixed-integer conic programming (MICP) reformulation of \eqref{eq:truncated_cvx} based on perspective functions. By relaxing its integrality constraints, one can then compute a lower bound $\check{F} \leq F^*$. The optimal solution of the relaxed reformulation serves as starting point to the aforementioned heuristics. These latter return a candidate solution associated with an objective value $\hat{F} \geq F^*$. 
    In such circumstances, this candidate is certified with $\hat{F}-\check{F}$ absolute global optimality. \\ \vspace{-5pt}\\ Although (SMC) falls in the scope of \eqref{eq:min_problem}, our algorithm \texttt{ULO} is not the method of choice for a very large number $n$ of \emph{pieces}, as it is often the case in \eqref{eq:truncated_cvx} for machine-learning experiments for which lots of \emph{data} appears as a desirable feature. As previously seen with the clustering example, $N$ matches the number of data points hence $n$ would scale exponentially as $B^N$. On the contrary, we are focused on globality results. Unlike previously cited works, our main attention is driven by \emph{functional constraints} that render the evaluation of $\nu^{(i)}$ for every piece $i \in [n]$ costful, like in \eqref{eq:example1}. By computing values $\nu(\{i\},S)$ as in \eqref{eq:small_min_problem_i} for small-sized $S\subseteq [m]$, we produce lower bounds $\check{F}$ at reduced time expenses, eventually leading to a satisfying optimality certificate.  

\subsection{Contributions} 
 We summarize our main contributions as follows.
 \begin{itemize} 
     \item First, we give a valuable insight (Section \ref{sec:insight}) into \eqref{eq:min_problem} together with meaningful examples (Section \ref{sec:motivation}). Furthermore, we provide sufficient local optimality conditions (Section \ref{sec:opt_cond}), tailored for \eqref{eq:min_problem}. We present (Section \ref{sec:rationale}) and prove the exactness (Section \ref{sec:correctness}) of our new algorithm \texttt{ULO}, using the above conditions to elect our output iterate among visited candidates. That is, if \texttt{ULO} is stopped before exact global optimality, it will return a local minimum of \eqref{eq:min_problem}, usually hard to certify in a nonconvex nonsmooth setting. \\
     \item Second, assuming a time complexity model when using our black-box \eqref{eq:small_min_problem_i}, we conduct a theoretical study (Section \ref{subsec:dag}) of the overall computational cost of both the baseline \texttt{Enumeration Scheme} \eqref{eq:enumeration_scheme_ref}, and our algorithm \texttt{ULO}. \\To that end, we develop a stochastic directed acyclic graph (DAG) abstraction of \eqref{eq:min_problem} instances whose difficulty relies on controllable parameters. By simulating \texttt{ULO} many times on those graph abstract problems, we identify regimes, defined by the control parameters, for which \texttt{ULO} is expected to outperform the baseline.\\
     \item Third, we demonstrate the validity of our approach with some numerical experiments on practical tasks (Section \ref{num_experiments}). We compare (emphasis put on effective wall-clock time) \texttt{ULO} against a pure \texttt{Enumeration Scheme} but also against a \texttt{Restarted Alternating Minimization} algorithm, based on an extension of the bi-convex formulation in \cite{Barratt20} (Section 3. \textsection \,Alternating minimization).

 \end{itemize}

\subsection{Preliminaries}
\label{prel} 

For any $d\in \mathbb{N}$,
we endow $\mathbb{R}^{d}$ with the usual dot product and the classical Euclidean norm. 
For any $p$-norm $||\cdot||_p$ ($p \geq 1$) on $\mathbb{R}^d$, $R \geq 0$ and $\bar{x} \in \mathbb{R}^d$,
\begin{equation}
\mathbb{B}_{p}(\bar{x}\,;\,R) := \big\{x \in \mathbb{R}^d\,|\, ||x-\bar{x}||_p \leq R \big\}.
\end{equation}

\noindent The handy notation $\mathbf{1}_d$ (respectively $\mathbf{0}_d$) represents the \emph{all-ones} (respectively \emph{all-zeroes}) vector of size $d$. However, when the context is clear, we will drop the subscript. \\
Let $h : \mathbb{R}^d \to \mathbb{R} \cup \{\infty\}$, the (natural) \emph{domain} of $h$ noted $\text{dom} \,h$ is defined as 
\begin{equation}
\text{dom} \,h := \{x\in\mathbb{R}^d\,|\,h(x)<\infty\}.
\label{def:domain}
\end{equation}
Let $N \in \mathbb{N}$. The $N$-\emph{standard simplex} reads
\begin{equation}
\Delta^N := \{q \in \mathbb{R}^N\,|\,\langle\mathbf{1}_N,q\rangle = 1,\, q\geq \mathbf{0}_N\}.
\end{equation}
The \emph{convex hull} of a set $X \subseteq \mathbb{R}^d$ written $\textbf{conv}(X)$ represents the smallest convex set containing $X$. This set is described as
\begin{equation}
\textbf{conv}\big(X\big) := \Bigg\{\sum_{s=1}^{\hat{N}}\,q_s \cdot g^{(s)}\,\bigg|\,\hat{N} \in \mathbb{N},\,g^{(s)} \in X\hspace{3pt}\forall s \in [\hat{N}] \wedge q \in \Delta^{\hat{N}}\Bigg\}. 
\label{eq:cxv_hull_gen}
\end{equation}
If it turns out that $X=\{g^{(s)}\}_{s=1}^N$ then this set explicitly boils down to
\begin{equation}
\textbf{conv}\big(X\big) = \Bigg\{\sum_{s=1}^{N}\,q_s\cdot g^{(s)}\,\bigg|\,q \in \Delta^N\Bigg\}.
\label{eq:cxv_hull}
\end{equation}
Let $h : \mathbb{R}^d \to \mathbb{R}$ be locally Lipschitz continuous (differentiable almost everywhere \cite{NSNC20}), its \emph{Clarke subdifferential set} at any $x \in \mathbb{R}^d$ is well-defined and corresponds to
\begin{equation}
\partial_C h(x) = \textbf{conv}(D_h(x)), \quad D_h(x) :=\Bigg\{ g \in \mathbb{R}^d \,:\begin{array}{l}
\exists \,x_k \to x,\,\nabla h(x_k) \text{ exists}\\
\nabla h(x_k) \to g
\end{array}
\Bigg\}.
\label{eq:clarke}
\end{equation}
\remark \label{rem:nondiff}If $\partial_C h(x)$ is not a singleton then $h$ is not differentiable at $x$.

\begin{definition}[Local optimality] \label{def:locopt} A point $\hat{x} \in \mathcal{X}$ is called a \emph{local minimum} of \eqref{eq:min_problem} if $F(\hat{x})<\infty$, i.e. $\hat{x} \in \text{dom}\,F$, and there exists a radius $\alpha>0$ such that 
\begin{equation}
 F(\hat{x}) = \min_{x\, \in \,\mathcal{X} \,\cap \,\mathbb{B}_2(\hat{x}\,;\,\alpha)}\,F(x).
\end{equation}
\end{definition}
\noindent Let $x\in \mathcal{X}$. We will often need to collect all the indices of \emph{pieces} that achieve the minimal value among all the \emph{pieces}, i.e. $\min_{i\,\in\,[n]}\,f^{(i)}(x)$. Since numerical calculations are prone to (small) computational errors, we set a tolerance $\rho\geq0$ and look after the \emph{pieces} whose value at $x$ does not exceed $\min_{i\,\in\,[n]}\,f^{(i)}(x)+\rho$, as suggested below.
\begin{definition}[Active sets] \label{def:active_set} Let $\rho\geq0$. The ($\rho$ -)\emph{active set} at $x \in \mathcal{X}$ is the subset 
\begin{equation}
\mathcal{A}_{\rho}(x) := \Big\{ i^*\in [n]\,\big|\, f^{(i^*)}(x) \leq \min_{i\,\in [n]\,}\,f^{(i)}(x)+\rho\Big\}.
\label{eq:as_min}
\end{equation}
\end{definition}
\noindent By convention, $\mathcal{A}(x) := \mathcal{A}_{0}(x)$, simply called \emph{active set} at $x$.\\

\noindent When \emph{pieces} are continuous (which we assume), an important feature of \emph{active sets} resides in their nonexpansiveness in the direct neighborhood of a point $\hat{x}$, as highlighted by the next proposition. 
\begin{proposition} 
\label{local_non_expansiveness}
Let $\hat{x}\in\mathcal{X}$. There exists  $\alpha >0$ such that for every $x \in \mathcal{X} \cap \,\mathbb{B}_2(\hat{x}\,;\,\alpha)$, $$\mathcal{A}(x) \subseteq \mathcal{A}_\rho(\hat{x}) \quad\forall \rho\geq0.$$
\end{proposition}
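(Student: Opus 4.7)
The plan is a straightforward continuity argument leveraging the finite number of pieces and the fact that nonactive pieces at $\hat{x}$ have strictly larger values than active ones.

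First, handle the trivial case in which every piece is active at $\hat{x}$, i.e.\ $\mathcal{A}(\hat{x})=[n]$. Then $\mathcal{A}(x)\subseteq[n]=\mathcal{A}(\hat{x})\subseteq\mathcal{A}_\rho(\hat{x})$ for any $x\in\mathcal{X}$ and any $\rho\geq 0$, so the claim holds with any $\alpha>0$.

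Otherwise, define the positive gap
\begin{equation*}
\delta := \min_{j\,\notin\,\mathcal{A}(\hat{x})}\,f^{(j)}(\hat{x}) - \min_{i\,\in\,[n]}\,f^{(i)}(\hat{x}) > 0,
\end{equation*}
which is strictly positive by Definition \ref{def:active_set} (with $\rho=0$) and well-defined since $[n]\setminus\mathcal{A}(\hat{x})\neq\emptyset$. Using continuity of each $f^{(i)}$ at $\hat{x}$ and finiteness of $[n]$, pick $\alpha>0$ small enough so that for every $x\in\mathbb{B}_2(\hat{x};\alpha)$ and every $i\in[n]$, $|f^{(i)}(x)-f^{(i)}(\hat{x})|<\delta/3$.

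Now fix $x\in\mathcal{X}\cap\mathbb{B}_2(\hat{x};\alpha)$. For any $i^*\in\mathcal{A}(\hat{x})$, $f^{(i^*)}(x)<\min_{i\in[n]}f^{(i)}(\hat{x})+\delta/3$, whereas for any $j\notin\mathcal{A}(\hat{x})$, $f^{(j)}(x)>f^{(j)}(\hat{x})-\delta/3\geq\min_{i\in[n]}f^{(i)}(\hat{x})+2\delta/3$. Hence $f^{(j)}(x)>f^{(i^*)}(x)$, so no index outside $\mathcal{A}(\hat{x})$ can attain the minimum at $x$. This gives $\mathcal{A}(x)\subseteq\mathcal{A}(\hat{x})$, and since $\mathcal{A}(\hat{x})\subseteq\mathcal{A}_\rho(\hat{x})$ for every $\rho\geq 0$, the proposition follows.

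There is no genuine obstacle here; the only point worth care is the degenerate case $\mathcal{A}(\hat{x})=[n]$, which must be treated separately to ensure $\delta$ is well-defined, and the use of a uniform radius $\alpha$, which is justified by taking the minimum of finitely many piece-wise continuity radii.
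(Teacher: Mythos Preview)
Your proof is correct and follows essentially the same continuity argument as the paper: both show that nonactive pieces remain strictly above the active minimum in a small ball, yielding $\mathcal{A}(x)\subseteq\mathcal{A}(\hat{x})\subseteq\mathcal{A}_\rho(\hat{x})$. Your version is in fact slightly tidier, using a single gap $\delta$ and a uniform $\delta/3$ bound rather than per-piece radii, and you explicitly handle the degenerate case $\mathcal{A}(\hat{x})=[n]$ which the paper leaves implicit.
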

\begin{proof}
Let $\bar{h} = \min_{i \in \mathcal{A}(\hat{x})}\,h^{(i)}$, $\bar{h}$ is continuous since $h^{(i)}$ is continuous for every $i\,\in\, [n]$. By definition, for every $\tilde{i}\in [n]\backslash \mathcal{A}(\hat{x})$, there must exist a slack $\delta(\tilde{i})>0$ such that $$h^{(\tilde{i})}(\hat{x}) = \bar{h}(\hat{x})+\delta(\tilde{i}).$$
Noticing that the function $\varrho^{(\tilde{i})} := h^{(\tilde{i})}-\bar{h}$ must also be continuous, it comes that there exists a strictly positive radius $R(\tilde{i})>0$ such that for every $x \in \mathcal{X}\,\cap\,\mathbb{B}_2(\hat{x}\,;\,R(\tilde{i}))$, $$\varrho^{(\tilde{i})}(x) > 0$$since $\varrho^{(\tilde{i})}(\hat{x}) = \delta(\tilde{i})>0$ by construction. 
Then, taking $\alpha = \min_{\tilde{i}\,\in \,([n]\backslash \mathcal{A}(\hat{x}))}\,R(\tilde{i})$ leads to a neighborhood of $\hat{x}$, i.e. $\mathcal{X} \,\cap \,\mathbb{B}_2(\hat{x}\,;\,\alpha)$, on which $\varrho^{(\tilde{i})}$ functions are all strictly positive. Hence, no index $\tilde{i} \in [n]\backslash \mathcal{A}(\hat{x})$ is such that $h^{(\tilde{i})}(x) < \bar{h}(x)$ for any $x$ of that neighborhood. It comes then that for every $x \in \mathcal{X} \,\cap \,\mathbb{B}_2(\hat{x}\,;\,\alpha)$, $$ \mathcal{A}(x) \subseteq \mathcal{A}(\hat{x}).$$
Finally, for any $\rho\geq0$, it always holds $\mathcal{A}(\hat{x})=\mathcal{A}_0(\hat{x}) \subseteq\bar{\mathcal{A}_\rho}(\hat{x})$ by definition.  \end{proof}
\noindent In other words, there exists a ball around $\hat{x}$ over which no other \emph{piece} than those already \emph{active} at $\hat{x}$ (i.e. contained in $\mathcal{A}(\hat{x})$) become \emph{active}. Subsequently, the analytical expression of $F$ on such ball relies on the (probably reduced number of) \emph{pieces} in $\mathcal{A}(\hat{x})$.\\We will formally lay down and use this fact in the proof of Lemma \ref{lemma:lo}.

\begin{assumption}

We make the following assumptions. Let $(H,S) \in [n] \times [m]$.

\begin{itemize}
\item[A.] One has access to an oracle computing  the value 
\begin{equation}
\nu(H,S) = \min_{x \in \mathcal{X}}\,\min_{i\, \in \,H}\,f^{(i)}(x) \quad \text{s.t.}\quad \max_{j \in S}\,c^{(j)}(x) \leq 0
\label{eq:oc1}.
\end{equation}

\item[B.] In addition, we further assume the existence and the availability of a (finite) minimizer $x^*(H,S) \in \mathcal{X} \cap \mathcal{D}^{(S)}$ such that,
\begin{equation}
\min_{i \in H}\,f^{(i)}(x^*(H,S)) = \nu(H,S).
\label{eq:oc2}
\end{equation}
Moreover, the oracle deterministically outputs $x^*(H,S)$. That is, it will always return the same $x^*(H,S)$ when provided with $(H,S)$ as arguments.\\

\item[C.] For every $i\in [n]$, $f^{(i)}$ is continuous and convex on its domain with 
\begin{equation} \mathcal{X} \subseteq \text{dom}\,f^{(i)}.
\label{eq:inclusion}
\end{equation}
\end{itemize}
\label{A1}
\end{assumption}

\section{Minimum structured optimization}
\label{models}
We start by discussing what causes \eqref{eq:min_problem} problems to be hard. We emphasize that even when \emph{pieces} $f^{(i)}$ and \emph{constraints} $c^{(j)}$ are smooth convex, our objective may be nevertheless nonconvex and nonsmooth in general. However, in this latter case, the source of nonconvexity is entirely explained and understood through the \texttt{min} operator.  \\
We recall at this stage that the objective $F$ amounts to $F^{([n],[m])}$ as expressed in \eqref{eq:min_problem_equiv}.
\subsection{Main features}
\label{sec:insight}
According to \eqref{eq:small_F} and \eqref{eq:min_problem_equiv}, the (natural) \emph{domain} of $F$ comes as follows \begin{equation}
\text{dom}\,F = \bigcup_{i=1}^{n}\,\Big(\mathcal{D}^{([m])} \,\cap\,\text{dom}\,f^{(i)}\Big) = \mathcal{D}^{([m])}\,\cap\,\bigcup_{i=1}^{n}\,\text{dom}\,f^{(i)}.
\label{eq:domain_F_equiv}
\end{equation} It represents the set of points that satisfy all the \emph{functional constraints} of \eqref{eq:min_problem} and for which at a least one \emph{piece} takes a finite value. When every \emph{piece} is continuous over the whole ambient space $\mathbb{R}^d$, we observe that $\text{dom}\,F = \mathcal{D}^{([m])}$. Looking back to Assumption \ref{A1} (C), from the fact that $\mathcal{X}\subseteq \text{dom}\,f^{(i)}$ for every $i\in[n]$, we deduce that 
\begin{equation}
F(x) < \infty \quad \forall x \in \mathcal{X}\,\cap\,\mathcal{D}^{([m])}.
\label{eq:overall_domain}
\end{equation}
We notice that \eqref{eq:min_problem} can also be rewritten compactly as 
\begin{equation}
F^* =\min_{x \,\in \, \mathcal{X}\,\cap\,\mathcal{D}^{([m])}} \,\min_{i \in [n]} f^{(i)}(x).
\label{eq:min_problem_compact}
\end{equation}
Equation \eqref{eq:min_problem_compact} underlines that our problem is nothing else than a constrained optimization problem for which the feasible set $\mathcal{X}\,\cap\,\mathcal{D}^{([m])}$ might be convex but for which the objective $\min_{i\in[n]}\,f^{(i)}$ is usually not smooth nor convex. Even in the simple case where \emph{pieces} and \emph{constraints} are all affine, $\mathcal{X}\,\cap\,\mathcal{D}^{([m])}$ is convex if $\mathcal{X}$ is convex (e.g.\@ a polyhedron if $\mathcal{X}$ is a polyhedron) but $\min_{i\in[n]}\,f^{(i)}$ would be piecewise-linear concave.\\

\noindent Without loss of generality, there exists a partition of the feasible set of \eqref{eq:min_problem_compact}  into $Y \in \mathbb{N}$ distinct regions $\{\mathcal{R}^{(e)}\}_{e \in [Y]}$ such that for every $e \in [Y]$, there is $i^*(e)\in[n]$ with  \begin{equation}F(x)= \min_{i \,\in\,[n]}\,f^{(i)}(x) = f^{(i^*(e))}(x) \quad \forall x \in \mathcal{R}^{(e)}.\label{eq:determination} \end{equation} \paragraph*{Nonsmoothness.}
Every $x \in \text{dom}\,F$ for which $|\mathcal{A}(x)|>1$ represents a point where the objective is not differentiable unless active \emph{pieces}' subdifferential sets boil down to a common singleton. When \emph{pieces} are smooth, the \emph{Clarke subdifferential set} reads \cite{NSNC20}
\begin{equation}
\partial_C F(x) = \textbf{conv}\big(\{\nabla f^{(i)}(x)\,|\,i\in\mathcal{A}(x)\}\big).
\label{eq:clarke_subdiff}
\end{equation}
If $\nabla f^{(\star)}(x)\not=\nabla f^{(\times)}(x)$ for $\star\not=\times$ indices in $\mathcal{A}(x)$ then $\partial_C F(x)$ contains more than one element thus implying the non-differentability of $F$ at $x$ (see Remark \ref{rem:nondiff}). \paragraph*{Nonconvexity.} Let $\nu\in \mathbb{R}$ and let $I \subseteq [n]$, the $\nu$-sublevel set of $\min_{i \,\in\,I}\,f^{(i)}$ reads $$ \mathcal{S}_{I,\nu} = \bigcup_{i\,\in\,I}\,\big\{x \in \mathbb{R}^d\,|\, f^{(i)}(x) \leq \nu\big\}.$$ 
Thereby, the term $\min_{i \,\in\,[n]}\,f^{(i)}$ fails to be convex as soon as $\mathcal{S}_{[n],\nu}$ is not convex for a fixed value of $\nu$. This happens when there exists $\tilde{i}\in[n]$ such that $\mathcal{S}_{\{\tilde{i}\},\nu} \backslash \mathcal{S}_{[n]\backslash\{\tilde{i}\},\nu}$ is not convex. From a geometrical point of view, this would imply that there are points $x_\star \in \mathcal{S}_{\{\tilde{i}\},\nu}$ and $x_\times \in \mathcal{S}_{[n]\backslash\{\tilde{i}\},\nu}$ such that the segment $[x_\star,x_\times]$ is not contained in $\mathcal{S}_{[n],\nu}$.\\

\noindent In order to clarify the notations covered so far, we illustrate on Figure \ref{fig:MS_intro} a toy example, relatively general case of \eqref{eq:min_problem}. Its global optimum is located at $x^* = \sqrt{3/2}$.
\begin{equation}
\begin{aligned}
\min_{x \,\in \,\mathbb{R}} \quad &  \min \{10^{-1}(x-1)^2,\,\max\{x-2,-2x\}-1,\,1+\exp(x/5),\,10(x+4)^2\}
\label{eq:illustration_1D}\\
\textrm{s.t.} \quad & \max \{(3/2)-x^2,\,x-5,\,-x-5\} \leq 0  \\ 
\end{aligned}
\end{equation}
One identifies $\mathcal{X} = \mathbb{R}$,\,$n=4$ (every \emph{piece} is convex),\,$m=3$ (the two last \emph{functional constraints} are convex but not the first one). Overall, $\text{dom}\,F = [-5,-\sqrt{3/2}] \cup [\sqrt{3/2},5]$.
\begin{figure}[h]
\vspace{-5pt}
\centering
\includegraphics[width=0.8\textwidth]{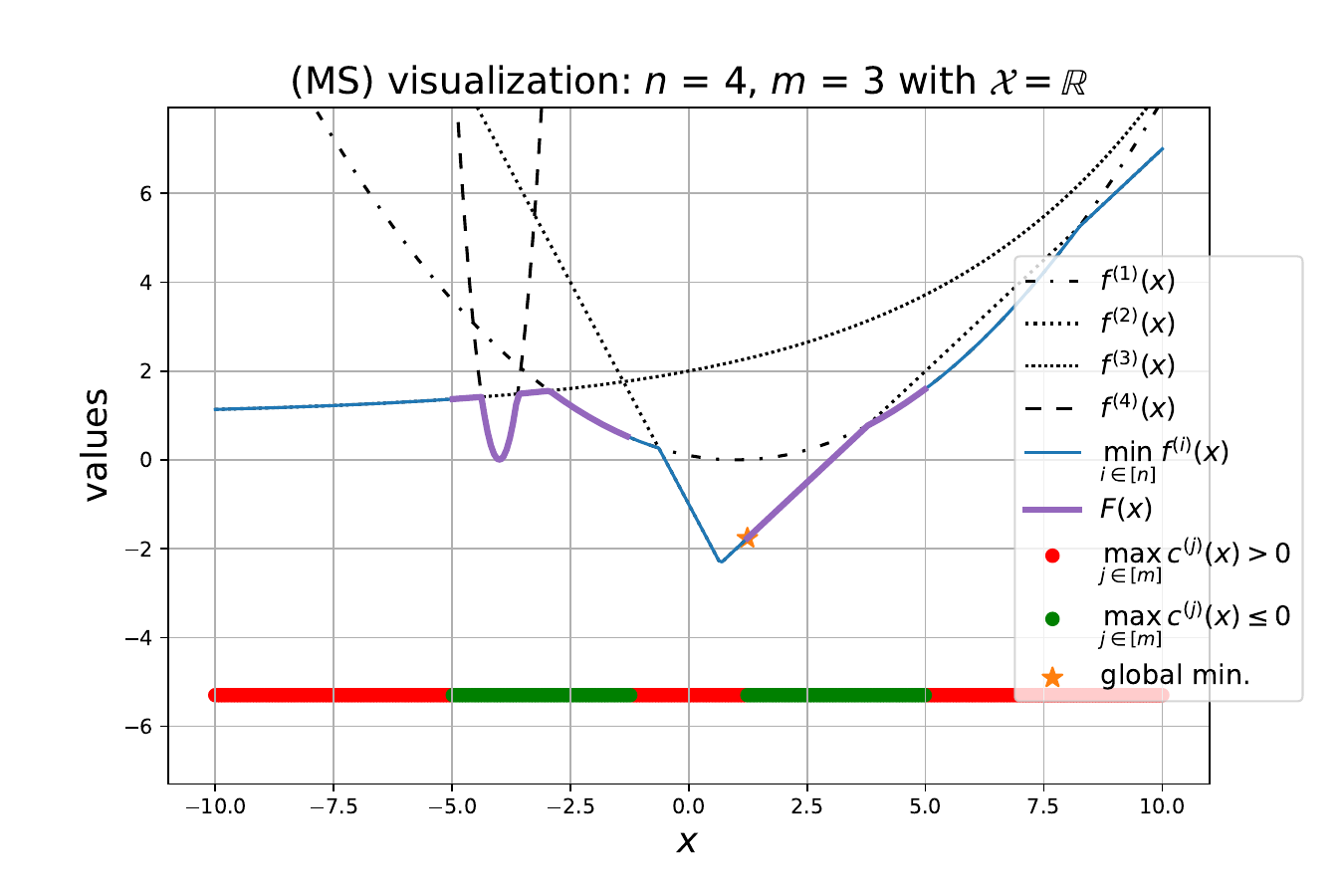}
\vspace{-10pt}
\caption{Graph and illustration of problem \eqref{eq:illustration_1D}.}
\label{fig:MS_intro}
\end{figure}\\
Let us now detail two more examples that motivate the framework of \eqref{eq:min_problem} and the further use of our algorithm \texttt{ULO} as an efficient tool to tackle them. 
\subsection{Further examples}
\label{sec:motivation}
Every piecewise-linear objective can be rewritten as a difference between two convex piecewise affine functions \cite{Schulze87}. Among others, these functions appear in recent works \cite{plmr20,Ho21,Bagirov22} to fit models as cheap surrogates to complicated and potentially nonconvex general objectives. 
 It is straightforward to reformulate the optimization of piecewise-linear objectives as instances of \eqref{eq:min_problem} as shown in the next example, very similar to our introductory Example \ref{example:orlp}.
    \example[Piecewise-Linear objectives] \label{example:pl} Let $\{(\beta_1^{(j)},\gamma_1^{(j)})\}_{j \in [m]}$, $\{(\beta_2^{(i)},\gamma_2^{(i)})\}_{i \in [n]}$ be two collections of tuples from $\mathbb{R}^p \times \mathbb{R}$ defining a difference-of-convex (piecewise-linear) function $f : \mathbb{R}^p \to \mathbb{R}$,
\begin{equation}
    u \to f(u) = \Bigg[\max_{j \in [m]}\,\langle \beta_1^{(j)},u\rangle+\gamma_1^{(j)}\Bigg]-\Bigg[\max_{i \in [n]}\,\langle \beta_2^{(i)},u\rangle+\gamma_2^{(i)}\Bigg].
\label{eq:pl_def}
\end{equation}
Using a standard epigraph technique, minimizing $f$ on any polyhedral set $\mathcal{U}\subseteq \mathbb{R}^p$ can be equivalently stated as 
\begin{equation}
\begin{aligned}
\min_{u \,\in\,\mathcal{U}}\,f(u) \equiv \min_{(u,\eta) \,\in \,\mathcal{U}\times\mathbb{R}} \quad &  \min_{i\in[n]}\, \eta -\gamma_2^{(i)}-\langle \beta_2^{(i)},u\rangle
\label{eq:example_pl_opt}\\
\textrm{s.t.} \quad & \max_{j\in [m]}\, -\eta+\gamma_1^{(j)}+\langle \beta_1^{(j)},u\rangle \leq 0. \\ 
\end{aligned}
\end{equation}
\vspace{-10pt}
\begin{table}[ht] 
\centering
\renewcommand*{\arraystretch}{1.23}
\begin{tabular}{|c||c|c|}
\hline
   \textbf{elements} & expression & \textit{typical} value \\
    \hline
    $d$ & $p+1$ & $10^2 \to 10^4$ \\
    \hline
    $\mathcal{X}$ & $\mathcal{U} \times \mathbb{R}$ & polyhedron \\
    \hline
    $f^{(i)}$ & $\eta-\gamma_2^{(i)}-\langle \beta_2^{(i)},u\rangle$ &  \multirow{2}{*}{linear in $u$}\\
    $c^{(j)}$ & $-\eta+\gamma_1^{(j)}+\langle \beta_1^{(j)},u\rangle $ & \\
    \hline
\end{tabular}
\vspace{7pt}
\caption{Piecewise-linear program translated as an instance of \eqref{eq:min_problem}.}
\label{tab:pl_prog}
\end{table}

A different example is the \emph{continuous relaxation} of difficult \emph{disjunctive programming} problems \cite{Balas18}. That is, one lifts and strongly penalizes the least violated constraint among the set of disjunctive constraints as explained in the following example.

\example[Relaxed Disjunctive Programming] \label{example:disprog} Let $(\mathbf{B},g) \in \mathbb{R}^{m \times p} \times \mathbb{R}^m$ be a data matrix together with linked noisy measurements. The typical $\ell_\infty$ regression task asks to recover coefficients $\bar{u} \in \mathbb{R}^p$ such that $\mathbf{B} \bar{u}\simeq g$ in the sense of minimal worst deviation, i.e.\@ with the smallest residual $||\mathbf{B}\bar{u}-g||_\infty$. In certain circumstances, one knows prior results based on previous regressions with large amounts of data. That is, thanks to the results of $n \in \mathbb{N}$ previous fits, one obtained a collection $\{u^{(i)}\}_{i \in [n]}$ of $n$ \emph{a-priori} plausible values for $\bar{u}$. The goal is then to update one of these previous models at the light of the new data $(\mathbf{B},g)$ at hand. More precisely, one looks for $\bar{u}$ that best explains the aforementioned new data while keeping $\bar{u}$ at a maximal distance of $R>0$ from at least one of the previous models $u^{(i)}$. Hence, the disjunctive model 
\begin{equation}
\begin{aligned}
\min_{u \,\in \,\mathbb{R}^p} \quad &  ||\mathbf{B}u-g||_\infty
\label{eq:example_rdp}\\
\textrm{s.t.} \quad & u \in \bigcup_{i\in[n]}\,\mathbb{B}_2(u^{(i)}\,;\,R) \\ 
\end{aligned}
\end{equation}
A possible relaxation of the above allows to search for $u$ outside the feasible set of \eqref{eq:example_rdp} but penalizes the excess distance from $u$ to the closest \emph{a-priori} explanation $u^{(i)}$, i.e.$$\min_{i\,\in\,[n]}\,\max\{||u-u^{(i)}||_2-R,0\} = \begin{cases} 0 & \exists\,i \,\in\,[n],\,||u-u^{(i)}||_2\leq R\\\min_{i\,\in\,[n]}\,||u-u^{(i)}||_2-R&\text{otherwise}.\end{cases}$$ \noindent To strengthen the relaxation, we multiply the penalty by a large constant $C_{\text{pen}} >0$. \\For any $j \in [m]$, let $(\beta^{(j)},\gamma^{(j)})$ denote $(\mathbf{B},g)$'s $j$-th observation, the problem becomes 
\begin{equation}
\begin{aligned}
\min_{(u,\eta) \,\in \,\mathbb{R}^p \times \mathbb{R}} \quad & \min_{i\in[n]}\, \eta + C_{\text{pen}}\cdot\max\{||u^{(i)}-u||_2-R,0\}
\label{eq:example_rdp_new}\\
\textrm{s.t.} \quad & \max_{j \in [m]}\,-\eta + |\langle \beta^{(j)},u\rangle-\gamma^{(j)}|\leq 0 \\ 
\end{aligned}
\end{equation}
\vspace{-10pt}
\begin{table}[ht] 
\centering
\renewcommand*{\arraystretch}{1.23}
\begin{tabular}{|c||c|c|}
\hline
   \textbf{elements} & expression & \textit{typical} value \\
    \hline
    $d$ & $p+1$ & $ 10^2 \to 10^4$ \\
    \hline
    $\mathcal{X}$ & $\mathbb{R}^p \times \mathbb{R}$ & whole domain \\
    \hline
    $f^{(i)}$ & $\eta + C_{\text{pen}}\cdot\max\{||u^{(i)}-u||_2-R,0\}$ &  convex \\
    $c^{(j)}$ & $-\eta + |\langle \beta^{(j)},u\rangle-\gamma^{(j)}|$& polyhedral convex\\
    \hline
\end{tabular}
\vspace{5pt}
\caption{Relaxed disjunctive program translated as an instance of \eqref{eq:min_problem}.}
\label{tab:rdp}
\end{table}
\vspace{-15pt}
\subsection{Upper and lower models} 
We derive now global models of the objective $F$ that ultimately lead to subproblems of \eqref{eq:min_problem}, either yielding upper bounds $\hat{F}$ or lower bounds $\check{F}$ on the optimal value $F^*$. These models are not necessarily tight in the sense that there might not exist a point in the set $\mathcal{X} \cap \mathcal{D}^{([m])}$ such that the value of the models and the value of $F$ coincide. 

\paragraph*{Upper models} Let $H\subseteq [n]$. For both $F^{(H,[m])}$ and $F=F^{([n],[m])}$, all the \emph{constraints} are included in the model. In general, $\text{dom}\,F \supseteq \text{dom}\,F^{(H,[m])}$ but equality holds, for example, if all the \emph{pieces} in $H$ are full-domain. Nevertheless, at fixed $x \in \mathcal{D}^{([m])}$, one only chooses a minimal value among \emph{pieces} in $H \subseteq [n]$ for $F^{(H,[m])}$, hence the inequality 
\begin{equation}
 F^{(H,[m])}(x) \geq F(x) \quad \forall x \in \text{dom}\,F^{(H,[m])}.
 \label{eq:upper_model}
\end{equation}
If $\mathcal{A}(x) \cap H = \emptyset$ for every $x \in \mathcal{X} \cap \mathcal{D}{([m])}$, $F^{(H,[m])}$ is not tight since \emph{pieces} from $H$ are nowhere \emph{active} on the feasible set.
Minimizing both sides of \eqref{eq:upper_model} on $\mathcal{X}$ yields
\begin{equation}
    \hat{F}(H) := \min_{x\in \mathcal{X}}\,F^{(H,[m])}(x) =\min_{i \,\in\, H}\,\nu^{(i)} \geq F^*.
\label{eq:upper_bound_gen}
\end{equation}
\paragraph*{Lower models} Symmetrically to \eqref{eq:upper_model} where all the \emph{constraints} are considered in $S$ in the expression $F^{(H,S)}$, here one keeps all the \emph{pieces} in $H$. For any $S\subseteq [m]$, 
\begin{equation}
 F^{([n],S)}(x) \leq F(x) \quad\forall x \in \text{dom}\,F^{([n],S)}.
 \label{eq:lower_model}
\end{equation}
When $x \in \mathcal{D}^{([m])}$, $F^{([n],S)}$ matches $F$ in the sense that either $F(x)=F^{([n],S)}(x)=\infty$ if $x\not \in \cup_{i \in [n]}\,\text{dom}\,f^{(i)}$ (see \eqref{eq:domain_F_equiv}) or $F(x)=F^{([n],S)}(x)<\infty$ otherwise. On the contrary, if $x \in \text{dom}\,F^{([n],S)} \cap (\mathbb{R}^d \backslash \mathcal{D}^{([m])})$, $F(x)=\infty$ whereas $F^{([n],S)}$ takes a finite value. One concludes that $\text{dom}\,F^{([n],S)}\supseteq \text{dom}\,F$.
Again, minimizing both sides of \eqref{eq:lower_model} on $\mathcal{X}$ yields 
\begin{equation}
    \check{F}(S) := \min_{x\in \mathcal{X}}\,F^{([n],S)}(x) \leq F^*.
\label{eq:lower_bound_gen}
\end{equation}
\subsection{Optimality conditions}\label{sec:opt_cond} We end this section by proving a lemma providing a convenient sufficient condition \eqref{eq:scan} to deduce \emph{local optimality} as understood in Definition \ref{def:locopt}.
\begin{lemma}
\label{lemma:lo}
Let $i^*\in [n]$, $\rho\geq0$ and $x^* \in \arg\min_{x\,\in\,\mathcal{X}}\,F^{(\{i^*\},[m])}(x)$. If it holds
\begin{equation} \min_{x \,\in\,\mathcal{X}}\,F^{(\{i\},[m])}(x)= \nu^{(i)} \geq \nu^{(i^*)} = F^{(\{i^*\},[m])}(x^*) \quad \forall i \in \mathcal{A}_\rho(x^*)
\label{eq:scan}
\end{equation}
then $x^*$ is a local optimum of \eqref{eq:min_problem}.
\end{lemma}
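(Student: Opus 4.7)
The plan is to apply Proposition~\ref{local_non_expansiveness} at $x^*$ to pin down a neighborhood in which the active set cannot escape $\mathcal{A}_\rho(x^*)$, and then use the hypothesis \eqref{eq:scan} to bound $F$ from below by $F(x^*)$ on that neighborhood.

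First I would verify that $F(x^*) = \nu^{(i^*)}$. The upper bound $F(x^*) \leq f^{(i^*)}(x^*) = \nu^{(i^*)}$ is immediate since $x^*$ minimizes $F^{(\{i^*\},[m])}$ on $\mathcal{X}$ and therefore lies in $\mathcal{X} \cap \mathcal{D}^{([m])}$. For the reverse bound, pick any $i' \in \mathcal{A}(x^*)$ attaining $\min_{i \in [n]} f^{(i)}(x^*)$. Feasibility of $x^*$ gives $F(x^*) = f^{(i')}(x^*) \geq \nu^{(i')}$, and since $\mathcal{A}(x^*) = \mathcal{A}_0(x^*) \subseteq \mathcal{A}_\rho(x^*)$, the hypothesis \eqref{eq:scan} forces $\nu^{(i')} \geq \nu^{(i^*)}$. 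The two bounds collapse to $F(x^*) = \nu^{(i^*)}$.

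Next, Proposition~\ref{local_non_expansiveness} applied at $\hat{x} = x^*$ produces a radius $\alpha > 0$ such that $\mathcal{A}(x) \subseteq \mathcal{A}_\rho(x^*)$ for every $x \in \mathcal{X} \cap \mathbb{B}_2(x^*;\alpha)$. Given such an $x$, if $x \notin \mathcal{D}^{([m])}$ then $F(x) = \infty \geq F(x^*)$ by the convention in \eqref{eq:small_F}. Otherwise $x$ is feasible and, picking any $i_x \in \mathcal{A}(x)$, I would write
$$F(x) = f^{(i_x)}(x) \geq \nu^{(i_x)} \geq \nu^{(i^*)} = F(x^*),$$
where the first inequality uses feasibility and the second invokes \eqref{eq:scan} at the index $i_x \in \mathcal{A}_\rho(x^*)$. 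This delivers exactly the inequality required by Definition~\ref{def:locopt}.

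The subtle point, and the one I expect to be the main obstacle to spot, is the intermediate identity $F(x^*) = \nu^{(i^*)}$. A priori the hypothesis only controls the values $\nu^{(i)}$ of pieces that are nearly active at $x^*$, so it is not obvious that no other piece could undercut $f^{(i^*)}$ at $x^*$ itself. The resolution is that any such undercutting piece automatically lies in $\mathcal{A}(x^*) \subseteq \mathcal{A}_\rho(x^*)$, so \eqref{eq:scan} rules it out and makes the comparison across the whole neighborhood go through cleanly.
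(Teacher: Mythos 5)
Your proposal is correct and follows essentially the same route as the paper: invoke Proposition~\ref{local_non_expansiveness} at $x^*$ to confine active sets in a neighborhood to $\mathcal{A}_\rho(x^*)$, then chain $F(x)\geq \nu^{(i_x)}\geq \nu^{(i^*)}\geq F(x^*)$ using \eqref{eq:scan} and the upper-model inequality $f^{(i^*)}(x^*)\geq F(x^*)$ (the paper phrases this by interchanging the two minimizations rather than pointwise, and only needs the inequality $\nu^{(i^*)}\geq F(x^*)$ where you prove the stronger equality $F(x^*)=\nu^{(i^*)}$, a fact the paper relegates to Remark~\ref{rem:tightness}).
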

\begin{proof} Let $\rho\geq 0$ be fixed, Proposition \ref{local_non_expansiveness} implies the existence of a radius $\alpha>0$ such that $\mathcal{A}(x) \subseteq \mathcal{A}_{\rho}(x^*)$ for every $x \in \mathcal{X} \cap \mathbb{B}_{2}(x^*\,;\,\alpha)$. By Definition \ref{def:active_set} of \emph{active set}, $$F(x) = \min_{i\,\in\,[n]}\,f^{(i)}(x) = \min_{i\, \in \,\mathcal{A}(x)}\, f^{(i)}(x)\quad \forall x \in \mathcal{X} \cap \mathcal{D}^{([m])}.$$
Combining both last statements, we can write that for any $x \in \mathcal{X} \,\cap\, \mathbb{B}_{2}(x^*\,;\,\alpha) \,\cap\, \mathcal{D}^{([m])}$,
$$F(x) = \min_{i\, \in \,\mathcal{A}_\rho(x^*)}\, f^{(i)}(x).$$\vspace{-10pt}
Moreover, we notice that 
\begin{align*}\min_{x \in \mathcal{X} \,\cap\, \mathbb{B}_{2}(x^*\,;\,\alpha)}\,F(x)&= \min_{x \in \mathcal{X} \,\cap\, \mathbb{B}_{2}(x^*\,;\,\alpha) \,\cap\, \mathcal{D}^{([m])}}\,F(x)\\
&=\min_{i\, \in \,\mathcal{A}_\rho(x^*)}\, \min_{x \in \mathcal{X} \,\cap\, \mathbb{B}_{2}(x^*\,;\,\alpha) \,\cap\, \mathcal{D}^{([m])}}\,f^{(i)}(x)\\
&=\min_{i\, \in \,\mathcal{A}_\rho(x^*)}\,\min_{x \in \mathcal{X} \,\cap\, \mathbb{B}_{2}(x^*\,;\,\alpha)}\,F^{(\{i\},[m])}(x)\\
&\geq \min_{i\, \in \,\mathcal{A}_\rho(x^*)}\,\min_{x \in \mathcal{X}}\,F^{(\{i\},[m])}(x)=\min_{i\,\in\,\mathcal{A}_{\rho}(x^*)}\,\nu^{(i)}.
\end{align*}
Condition \eqref{eq:scan} finally tells us that 
$$\min_{x \in \mathcal{X} \,\cap\, \mathbb{B}_{2}(x^*\,;\,\alpha)}\,F(x) \geq F^{(\{i^*\},[m])}(x^*)\geq F(x^*).$$
The last inequality comes from the fact that $x^* \in \text{dom}\,F^{(\{i^*\},[m])}$ hence $\text{dom}\,F$. \\Since $x^* \in \mathcal{X} \,\cap\, \mathbb{B}_{2}(x^*\,;\,\alpha)$, we also have that 
$$\min_{x \in \mathcal{X} \,\cap\, \mathbb{B}_{2}(x^*\,;\,\alpha)}\,F(x) \leq F(x^*)$$.
We can conclude that $x^*$ locally minimizes $F$ over $\mathcal{X} \,\cap\, \mathbb{B}_{2}(x^*\,;\,\alpha)$.
\end{proof}
\remark \label{rem:tightness}It can also be shown, independently, thanks to a simple \emph{reductio ad absurdum}, the \emph{piece} $i^*$ that generated $x^*$ actually belongs itself to the set $\mathcal{A}(x^*)$. 

\section{Upper-Lower Optimization algorithm}
\label{ulo_disclosure} 
We postpone the full description of our Upper-Lower Optimization algorithm (\texttt{ULO}), to explain first what are the observations that led us out to devise it as it is.
\subsection{Rationale}  \label{sec:rationale}
    Looking back at the motivating examples and, more broadly, at the core definition of \eqref{eq:min_problem}, one could ask which \emph{functional constraints} really influence the value of $F^*$. \\ \\Recalling Assumption \ref{A1} (B), the minimization of any \emph{piece} $f^{(i)}$ on $\mathcal{X}\,\cap \,\mathcal{D}^{([m])}$ yields a well-defined $x^*(i) := x^*(\{i\},[m])$. The set of \emph{active constraints} at $x^*(i)$ is
    \begin{equation}
        \mathcal{C}(i) = \{j \in [m]\,|\,c^{(j)}(x^*(i)) = 0\}.
        \label{eq:active_constraints}
    \end{equation}
    Removing the \emph{passive constraints} does not alter the optimization,\,i.e.
    \begin{equation}
    \min_{x \in \mathcal{X} \,\cap\,\mathcal{D}^{([m])}}\,f^{(i)}(x) = \nu^{(i)} = \min_{x \in \mathcal{X} \,\cap\,\mathcal{D}^{(\mathcal{C}(i))}}\,f^{(i)}(x).
    \label{eq:neutral}
    \end{equation}
        The naive \texttt{enumeration} visits the candidates $x^*(i)$ for $i=1,\dots,n$ without keeping in memory which \emph{constraints} were active at previously computed $x^*(1), ,\dots,x^*(i-1)$.\\A global minimum $x^*=x^*(i^*)$ of \eqref{eq:min_problem} is found among $\{x^*(i)\}_{i=1}^{n}$ for a certain $i^* \in [n]$.  \\ \\Two favourable situations can occur at this stage. When constraints are fairly different from each other, e.g.\@ affine and linearly independent, most likely only a small subset of them will be active at a given $x^*(i)$. For instance, if the problem defining $x^*(i)$ is an LP, usually $|\mathcal{C}(i)| \in \Theta(d)$ \cite{LY21}, which can be significantly smaller than $m$. \\It could also happen that sets $\mathcal{C}(i)$ share a substantial part of their indices in common, lowering considerably the cardinality of their union \eqref{eq:union_C}. Based on both these statements, if one was able to detect the overall union of \emph{active constraints} sets \begin{equation}
        \bar{\mathcal{C}} := \bigcup_{i=1}^{n}\,\mathcal{C}(i),
        \label{eq:union_C}
        \end{equation} 
        then even the baseline \texttt{enumeration} scheme efficiency could be enhanced in the sense that every $x^*(i)$ could be obtained for less computational efforts as 
        $$x^*(i) = \arg \min_{x \in \mathcal{X} \,\cap\,\mathcal{D}^{(\bar{\mathcal{C}})}}\,f^{(i)}(x) = x^*(\{i\},\bar{\mathcal{C}}).$$
        In this paper, we intend in this paper to iteratively build a set $S \subseteq [m]$ of indices that will serve as surrogate to $\bar{\mathcal{C}}$. Roughly speaking, the minimization of upper models \eqref{eq:upper_model} will give us new candidates $x^*(\bar{i})$ with new active indices $\mathcal{C}(\bar{i})$ to be added to our set $S$. Furthermore, we also keep track of visited indices in $H \subseteq [n]$ to avoid testing a \emph{piece} $i \in [n]$ twice.  The minimization of lower models \eqref{eq:lower_model} includes a minimization pass over every \emph{piece} not in $H$ but only taking into account constraints belonging to set $S$, ideally close from set $\bar{\mathcal{C}}$. By minimizing both upper and lower models, we obtain accordingly upper and lower bounds on $F^*$ that hopefully allow us to skip testing most of the \emph{pieces} in the \texttt{enumeration} phase.

\subsection{Correctness}
\label{sec:correctness}
Now that the rationale behind \texttt{ULO} has been enlightened, we detail our algorithm.\\ We start by presenting its pseudo-code in Algorithm \ref{alg:ulo}. It takes as arguments 
a \emph{piece} index $\hat{i} \in [n]$ to start with as well as two parameters $\tilde{\epsilon} \geq 0$ and $\epsilon\geq0$, respectively relative and absolute approximate globality tolerances. As previously announced, each outer-iteration of \texttt{ULO} involves two successive distinct steps, namely phase (a) (Algorithm \ref{alg:phase_a}) and phase (b) (Algorithm \ref{alg:phase_b}), that we thoroughly explain and analyze. Each time phase (a) is exited, $\hat{F}$ (smallest upper bound computed so far) and the output iterate $x$ are updated whereas at the end of phase (b), $\check{F}$ (biggest lower bound computed so far) is revised and, possibly, approximate global optimality is detected according to the gap $\hat{F}-\check{F}$. After a maximum of $n$ (respectively $m$) updates, $\hat{F}$ (respectively $\check{F}$) must be equal to $F^*$, ultimately pushing the gap $\hat{F}-\check{F}$ towards $0$. 
\vspace{5pt}
\begin{algorithm} [H]
\caption{$\texttt{ULO}(\hat{i},\tilde{\epsilon},\epsilon)$}\label{alg:ulo}
\begin{algorithmic}[1]
\Require $\rho\geq0$, $k=0$,\,$H_0 = \emptyset=S_0$,\,$(\hat{F}_0,\check{F}_0)=(-\infty,\infty),\,\tilde{\epsilon}\geq0,\,\epsilon\geq0$.
\Ensure $x_{k}$ \emph{local minimum} such that $F(x_k)-F^*\leq \max\{\epsilon,\max\{1,|F(x_k)|\} \cdot \tilde{\epsilon}\}$.
\While{$\hat{F}_k-\check{F}_k> \max\{\epsilon,\max\{1,|\hat{F}_k|\} \cdot \tilde{\epsilon}\}$}
\State $(i^*_k,H_{k+1},\hat{F}_{k+1},x_{k+1}) \gets \texttt{phase-(a)}(\hat{i}_k,H_k,\hat{F}_k,x_k)$
\State $(\hat{i}_{k+1},S_{k+1},\check{F}_{k+1}) \gets \texttt{phase-(b)}(i^*_k,S_k,\check{F}_k,H_k)$
\If{$\check{F}_{k+1} = \hat{F}_{k+1}$} 
\Return $x_{k+1}$ \Comment{\textcolor{purple}{\emph{exact global optimality}}}
\Else
\State $k \gets k+1$
\EndIf
\EndWhile\\
\Return $x_k,\hat{F}_k,\check{F}_k$
\end{algorithmic}
\end{algorithm}
\vspace{5pt}
\noindent We dig now into phases (a) \& (b) specifically. For the sake of clarity, $k$ subscripts (the outer-iteration count) were dropped within their respective pseudo-code.  \newpage

\paragraph*{Phase (a)} From an input $V \subseteq [n] \backslash H$, one selects uniformly at random the index $\bar{i}$ of a \emph{piece} not tested yet. According to Assumption \ref{A1} (B), this leads to a well-defined new candidate $\tilde{x}=x^*(\bar{i})=x^*(\{\bar{i}\},[m])$ as the result of the minimization of $F^{(\{\bar{i}\},[m])}$ over $\mathcal{X}$ or, equivalently, the minimization of $f^{(\bar{i})}$ over $\mathcal{X} \cap \mathcal{D}^{([m])}$. The set of tested pieces, $H$, is then updated accordingly and we start over this process until there is \textbf{no new} reachable \emph{piece} from $\bar{x}$ (output iterate of phase (a)), i.e.\@ $\mathcal{A}_{\rho}(\bar{x})\backslash H = \emptyset$. We accept a new candidate if $\nu^{(\bar{i})}$ is strictly smaller than $R$, i.e. the objective value achieved by the last accepted candidate $\bar{x}$. $R$ is initialized at $\infty$ to make the first inner-iteration pass.

\begin{algorithm} [H]
\caption{| $\texttt{phase-(a)} (\hat{i},H,\hat{F},x)$}\label{alg:phase_a}
\begin{algorithmic}[1]
\Require set of visited \emph{pieces} $H$, starting \emph{piece} $\hat{i} \not\in H$, $x$ best \emph{local minimum} encountered so far (or initial iterate), $\hat{F}$ upper bound on $F^*$.
\Ensure $i^*$ such that no $\bar{i} \in \mathcal{A}_{\rho}(\bar{x})$ with $\nu^{(i)}<F(\bar{x})$, the sound update of $(H,\hat{F},x)$. 
\State $(H_+,V,R) \gets (H,\{\hat{i}\},\infty)$\Comment{\textbf{init.}}
\While{$V \not= \emptyset$} 
\State $\bar{i} \sim \text{Uni}\big(V)$
\State $(\tilde{x}, H_+) \gets (x^*(\bar{i}),H_+ \cup \big\{\bar{i}\big\}\big)$
\If{$\nu^{(\bar{i})}\geq R$} 
 $V \gets V\backslash \{\bar{i}\}$
\Else \State $(\bar{x},V,R,i^*) \gets (\tilde{x},\mathcal{A}_{\rho}(\bar{x}) \backslash H_+,F(\tilde{x}),\bar{i})$
\EndIf
\EndWhile 
\If{$F(\bar{x})<\hat{F}$}  $(\hat{F},x) \gets (F(\bar{x}),\bar{x})$ \Comment{update of $\hat{F}$}
\EndIf\\
\Return $(i^*,H_+,\hat{F},x)$
\end{algorithmic}
\end{algorithm}

\noindent When a new step is taken within the \texttt{while} loop of phase (a), a  \emph{piece} $\bar{i} \in \mathcal{A}_\rho(\bar{x})$ is tested. We show in Lemma \ref{lemma_um} that its value $\nu^{(\bar{i})}$ cannot exceed $F(\bar{x})$ by more than $\rho$. 
\begin{lemma}
\label{lemma_um}
Let $\bar{i} \in \mathcal{A}_\rho(\bar{x})$ for any $\bar{x} \in \mathcal{X}\,\cap\,\mathcal{D}^{([m])}$ and $\rho \geq 0$. Then we have 
\begin{equation}
F(x^*(\bar{i})) \leq \nu^{(\bar{i})} = f^{(\bar{i})}(x^*(\bar{i}))\leq f^{(\bar{i})}(\bar{x})\leq F(\bar{x})+\rho. 
\label{eq:cert_decrease}
\end{equation}
\end{lemma}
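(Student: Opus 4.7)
The plan is to unfold the chain of (in)equalities in \eqref{eq:cert_decrease} from right to left, since each step follows directly from an explicit definition introduced in the paper; no analytic work is needed. First I would note that, by hypothesis, $\bar{x} \in \mathcal{X} \cap \mathcal{D}^{([m])}$, so $F(\bar{x}) = F^{([n],[m])}(\bar{x}) = \min_{i \in [n]} f^{(i)}(\bar{x})$ is finite (cf.\ \eqref{eq:overall_domain} and Assumption \ref{A1}(C)). The rightmost inequality $f^{(\bar{i})}(\bar{x}) \leq F(\bar{x}) + \rho$ is then exactly the defining condition of $\mathcal{A}_\rho(\bar{x})$ in Definition \ref{def:active_set}, applied to the index $\bar{i}$.

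Next, for the middle inequality $f^{(\bar{i})}(x^*(\bar{i})) \leq f^{(\bar{i})}(\bar{x})$, I would invoke Assumption \ref{A1}(A--B): the oracle returns $x^*(\bar{i}) := x^*(\{\bar{i}\},[m]) \in \mathcal{X} \cap \mathcal{D}^{([m])}$ achieving
\[
\nu^{(\bar{i})} = \nu(\{\bar{i}\},[m]) = \min_{x \in \mathcal{X} \cap \mathcal{D}^{([m])}} f^{(\bar{i})}(x) = f^{(\bar{i})}(x^*(\bar{i})),
\]
which is precisely the stated equality $\nu^{(\bar{i})} = f^{(\bar{i})}(x^*(\bar{i}))$. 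Since $\bar{x}$ is itself a feasible point of that same minimization, the minimality of $x^*(\bar{i})$ immediately yields $f^{(\bar{i})}(x^*(\bar{i})) \leq f^{(\bar{i})}(\bar{x})$.

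Finally, the leftmost inequality $F(x^*(\bar{i})) \leq f^{(\bar{i})}(x^*(\bar{i}))$ follows from the pointwise bound $F = \min_{i \in [n]} f^{(i)} \leq f^{(\bar{i})}$ on $\mathcal{D}^{([m])}$: since $x^*(\bar{i}) \in \mathcal{X} \cap \mathcal{D}^{([m])}$, the value $F(x^*(\bar{i}))$ is finite and cannot exceed the particular term $f^{(\bar{i})}(x^*(\bar{i}))$ appearing in the minimum. Chaining the four elementary facts produces \eqref{eq:cert_decrease}.

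There is no real obstacle here; the only point requiring care is to verify that every $f^{(i)}$ evaluated along the chain is finite, which is guaranteed by Assumption \ref{A1}(C), namely $\mathcal{X} \subseteq \text{dom}\,f^{(i)}$ for all $i \in [n]$, combined with $\bar{x}, x^*(\bar{i}) \in \mathcal{X} \cap \mathcal{D}^{([m])}$.
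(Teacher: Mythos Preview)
Your proof is correct and follows essentially the same approach as the paper's: both unfold the chain by invoking, in order, the definition of the $\rho$-active set, the optimality of $x^*(\bar{i})$ for $f^{(\bar{i})}$ over $\mathcal{X}\cap\mathcal{D}^{([m])}$, and the pointwise inequality $F\leq f^{(\bar{i})}$. Your version is slightly more explicit about finiteness via Assumption~\ref{A1}(C), but the logical content is identical.
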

\begin{proof} 
By definition of $\rho$\emph{-active set} at $\bar{x}$, $f^{(\bar{i})}(\bar{x})$ is at most $\rho$ above the value $\min_{i \in [n]}\,f^{(i)}(\bar{x}) = F(\bar{x})$. Then, since $x^*(\bar{i})$ realizes the minimum of \emph{piece} $\bar{i}$ over $\mathcal{X} \cap \mathcal{D}^{([m])}$, one must have $\nu^{(\bar{i})} = f^{(\bar{i})}(x^*(\bar{i}))\leq f^{(\bar{i})}(\bar{x})$. Finally, one observes that $f^{(\bar{i})}(x^*(\bar{i})) \geq \min_{i \in [n]}\, f^{(i)}(x^*(\bar{i})) = F(x^*(\bar{i}))$ to conclude.
\end{proof}

\noindent 
Exiting the \texttt{while} loop of phase (a), one among two possible outcomes occurs:
\begin{itemize}
\item $F(\bar{x}) < \hat{F}$. In this case, we can conclude that $\bar{x}$ is a \emph{local minimum} of \eqref{eq:min_problem}.\\ Indeed, we know that $\bar{x}=x^*(i^*)$ for some $i^*\in H_{+}\backslash H$. Let's assume \emph{ad absurdum} that $\bar{x}$ is not a \emph{local minimizer}. As a consequence of Proposition \ref{local_non_expansiveness}, there exists $\alpha >0$ such that on $\mathcal{X} \,\cap\,\mathbb{B}_2(\bar{x}\,;\,\alpha)\, \cap\, \mathcal{D}^{([m])}$, $F = \min_{i\,\in\,\mathcal{A}_\rho(\bar{x})}\,f^{(i)}$. Since $\bar{x}$ is not \emph{locally optimal}, there must be a \emph{piece} $\tilde{i} \in \mathcal{A}_{\rho}(\bar{x})$ whose minimum over any $\mathcal{X} \,\cap\, \mathbb{B}_2(\bar{x}\,;\, \alpha)\, \cap\, \mathcal{D}^{([m])}$ strictly falls below $F(\bar{x})$. Thereby, it would also mean that $\nu^{(\tilde{i})} < F(\bar{x})$ since $\nu^{(\tilde{i})}$ is the optimal value of $f^{(\tilde{i})}$ over $\mathcal{X}\, \cap\, \mathcal{D}^{([m])}$. Then, either $\tilde{i} \in H$ and, by construction, one would have had $\hat{F}_k \leq F(x^*(\tilde{i}))< F(\bar{x})$ (contradiction) or $\tilde{i} \not\in  H$ but then $x^*(\tilde{i})$ should have been visited ($\tilde{i} \in \mathcal{A}_{\rho}(\bar{x})$) and selected ($\nu^{(\tilde{i})} < F(\bar{x})$) so that $\bar{x}$ could not be the output iterate of phase (a) (contradiction). As pointed out in Remark \ref{rem:tightness}, it also holds $\nu^{(i^*)}=F(\bar{x})$.\\

\item $F(\bar{x}) \geq \hat{F}$. In this case, $\bar{x}$ might possibly be a \emph{local minimum} of \eqref{eq:min_problem} but we already recorded a candidate $x$ associated with a better functional value.
\end{itemize}
To summarize, prior to entering phase (b), \texttt{ULO} ensures that output iterate $x_k$ is the best \emph{local minimum} visited so far and $\hat{F}_k = F(x_k) = \min_{i \in H_k}\,\nu^{(i)}$. Obviously, since $|H_{k+1}\backslash H_k|\geq 1$, there can be at most $n$ passes through phase (a) in total during an execution of Algorithm \ref{alg:ulo}. Hence there must be  exist $\hat{k} \in [n]$ such that $\hat{F}_{\hat{k}} = F^*$.
\vspace{-10pt}
\paragraph*{Phase (b)} Once the set of considered \emph{constraints} $S_+$ is enriched with \emph{active constraints} $\mathcal{C}(i^*)$ at the output iterate $\bar{x}$ of phase (a) as well as with a random $\bar{j}\in [m]\backslash S$ (hence $S\subsetneq S_{+}$), we look after a new \emph{piece} $\hat{i}$ to restart phase (a). To that end, we compute the values $\check{\nu}^{(\hat{i})}$ for every $i \in [n]$, each of which representing a lower bound on $\nu^{(i)}$, as highlighted hereafter: 
\begin{equation}
\nu^{(i)} \geq \check{\nu}^{(i)} = \begin{cases} \nu^{(i)} & i \in H_+ \\ \nu(\{i\},S_{+}) & i \not \in H_+. \end{cases}
\vspace{-6pt}
\end{equation}
Then, we select $\hat{i}\in[n]$ as the most promising \emph{piece}, i.e. $\hat{i} = \arg\min_{i\,\in\,[n]}\,\check{\nu}^{(i)}$, achieving the lower bound $\min_{i \in [n]}\,\check{\nu}^{(i)} \leq F^*$. 
\begin{algorithm} [H]
\caption{| $\texttt{phase-(b)} (i^*,S,\check{F},H_+)$}\label{alg:phase_b}
\begin{algorithmic}[1]
\Require $i^*$ output \emph{piece} from phase (a), $S$ set of previously stored \emph{active constraints}, $\check{F}$ lower bound on $F^*$,$H_+$ set of visited \emph{pieces} (i.e. $\nu^{(i)}$ is known for every $i\in H_+$).
\Ensure $\hat{i} \not\in H_+$ such that 
\State $\bar{j} \sim \text{Uni}([m]\backslash S)$
\State $S_+ \gets S \cup \mathcal{C}(i^*)\cup \{\bar{j}\}$  \Comment{\textbf{init.}}
\State $\hat{i} = \arg\min_{i\in [n]}\,\check{\nu}^{(i)} = \begin{cases} \nu^{(i)} & i \in H_+ \\ \nu(\{i\},S_+) & i \not \in H_+\end{cases}$ 
\State $\check{F} \gets \max\{\check{F},\check{\nu}^{(\hat{i})}\}$\Comment{update of $\check{F}$}\\
\Return $(\hat{i},S_+,\check{F})$
\end{algorithmic}
\end{algorithm}
\noindent At the end of phase (b), one can face the following scenario: $\hat{i}_{k+1} \in H_{k+1}$.\\
Recalling that $\hat{F}_{k+1} = \min_{i\in H_{k+1}}\,\nu^{(i)}$, we can write $$F^* \geq \min_{i \in [n]}\,\check{\nu}^{(i)} =  \check{\nu}^{(\hat{i}_{k+1})} = \nu^{(\hat{i}_{k+1})}\geq \hat{F}_{k+1}\geq F^* \Rightarrow x_{k+1} \in \arg \min_{x\in\mathcal{X}}\,F(x).
$$
Note that for every $\hat{i}_{k+1} \in [n]$, one can safely update $\check{F}_{k+1}= \max\big\{\check{F}_k,\check{\nu}^{(\hat{i}_{k+1})}\big\}. $\\

\noindent In summary, either \texttt{ULO} certifies that $x_{k+1}$ is \emph{globally optimal} or phase (a) is launched again with starting \emph{piece} $\hat{i}$. $\check{F}_{k+1}$ is monotonically increasing. Since $|S_{k+1}\backslash S_k|\geq 1$, there can be at most $m$ phases (b). That is, there exists $\check{k} \in [m]$ such that $S_{\check{k}+1} = [m]$ and $\check{F}_{\check{k}+1} = \min_{i \in [n]}\,\check{\nu}^{(i)} = \min_{i \in [n]}\,\nu(\{i\},S_{\check{k}+1}) = \min_{i \in [n]}\, \nu^{(i)} = F^*$.

\paragraph*{Illustration} 
Let us illustrate the behaviour of \texttt{ULO} on an example, closely related to Example \ref{example:pl}. The original problem is a Tikhonov-regularized piecewise-linear program
\vspace{-17pt}
\begin{equation}
\min_{|u|\leq 5} \,\frac{u^2}{2} +\bigg[ \max_{j \in [6]} \, \beta_1^{(j)} \cdot u + \gamma_1^{(j)} \bigg] -\bigg[ \max_{i \in [6]} \, \beta_2^{(i)}\cdot u + \gamma_2^{(i)} \bigg]
\label{eq:ulo_1D_raw}
\vspace{-5pt}
\end{equation}
with $(\beta_1^{(1)},\dots,\beta_1^{(6)})= \big(\frac{1}{4},-\frac{1}{2},\frac{1}{3},2,0,3\big)$, $(\gamma_1^{(1)},\dots,\gamma_1^{(6)})= \big(-2,-1,0,-2,-\frac{1}{4},-4\big)$ and  $(\beta_2^{(1)},\dots,\beta_2^{(6)})= \big(\frac{3}{2},1,-1,4,-2,0\big)$, $(\gamma_2^{(1)},\dots,\gamma_2^{(6)})= \big(0,2,1,-1,1,2\big)$.\\

\vspace{-2pt}
\noindent Translated into the \eqref{eq:min_problem} framework, the problem now reads
\vspace{-10pt}
\begin{equation*}
\min_{x=(u,\eta) \,\in \,\mathbb{B}_\infty(0\,;\,5)\times \mathbb{R}} \quad   \min_{i\in[6]}\bigg\{\frac{u^2}{2} + \eta - \beta_2^{(i)}\cdot u - \gamma_2^{(i)}\bigg\}\quad
\textrm{s.t.} \quad  \max_{j\in[6]}\bigg\{\beta_1^{(j)}\cdot u + \gamma_1^{(i)} - \eta \bigg\} \leq 0
\end{equation*}
We display on Figure \ref{fig:visu_ulo2} two iterations of \texttt{ULO} applied on the above problem. The values of different models are computed on $\mathcal{R}=\{(u,\eta)\,|\,\eta = \max_{j \in [6]}\, \beta_1^{(j)}\cdot u + \gamma_1^{(j)},\,|u|\leq 8\}$.
\vspace{-15pt}
\begin{figure}[ht]%
    \centering
    \vspace{-20pt}
    \subfloat[\centering $H_1 = \{1,4\}$, $\bar{x}_{1}=(2,2)$, $\hat{F}_1 = -3$]{{\includegraphics[width=7cm]{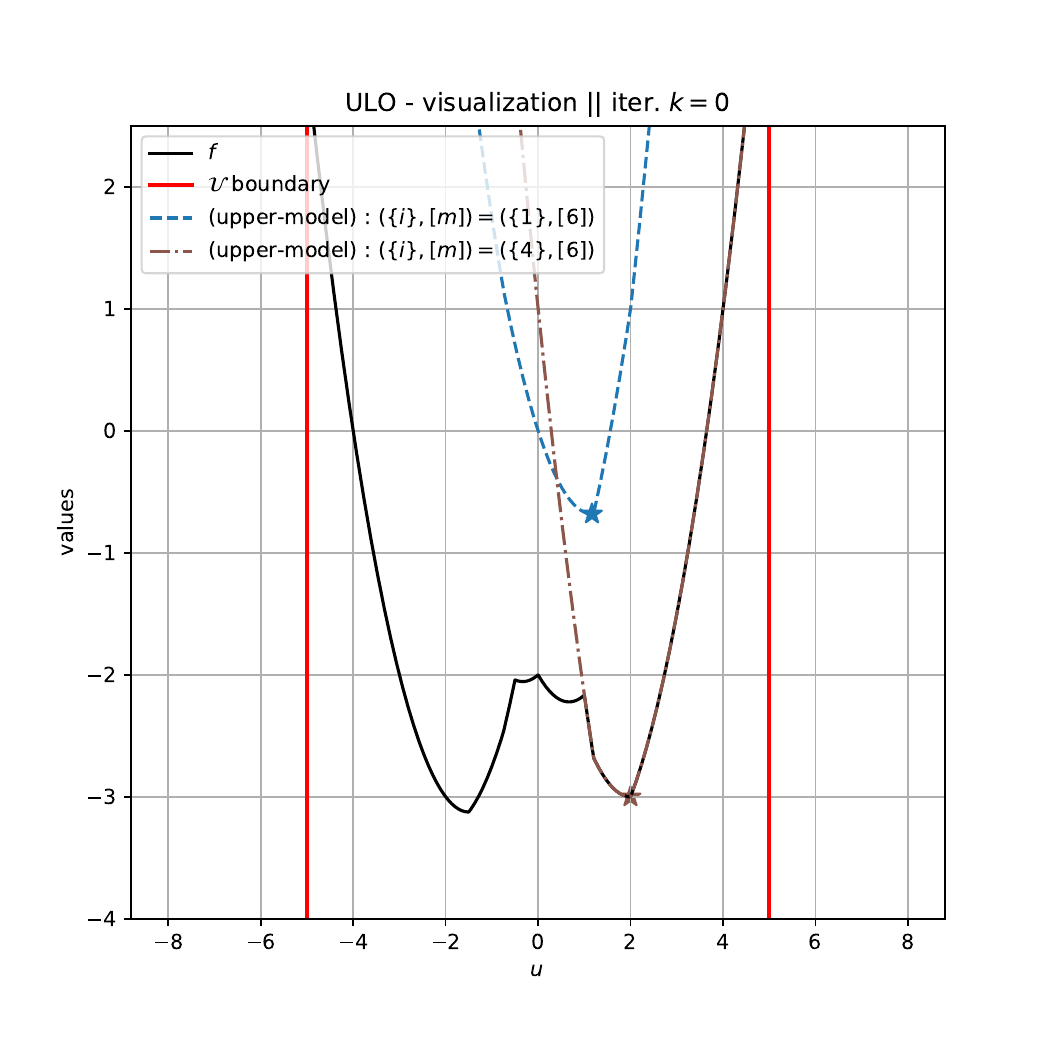} }}%
    \subfloat[\centering $S_1 = \{4, 6\}$, $\check{x}_{1}=(-4,-10)$, $\check{F}_1 = -11$]{{\includegraphics[width=7cm]{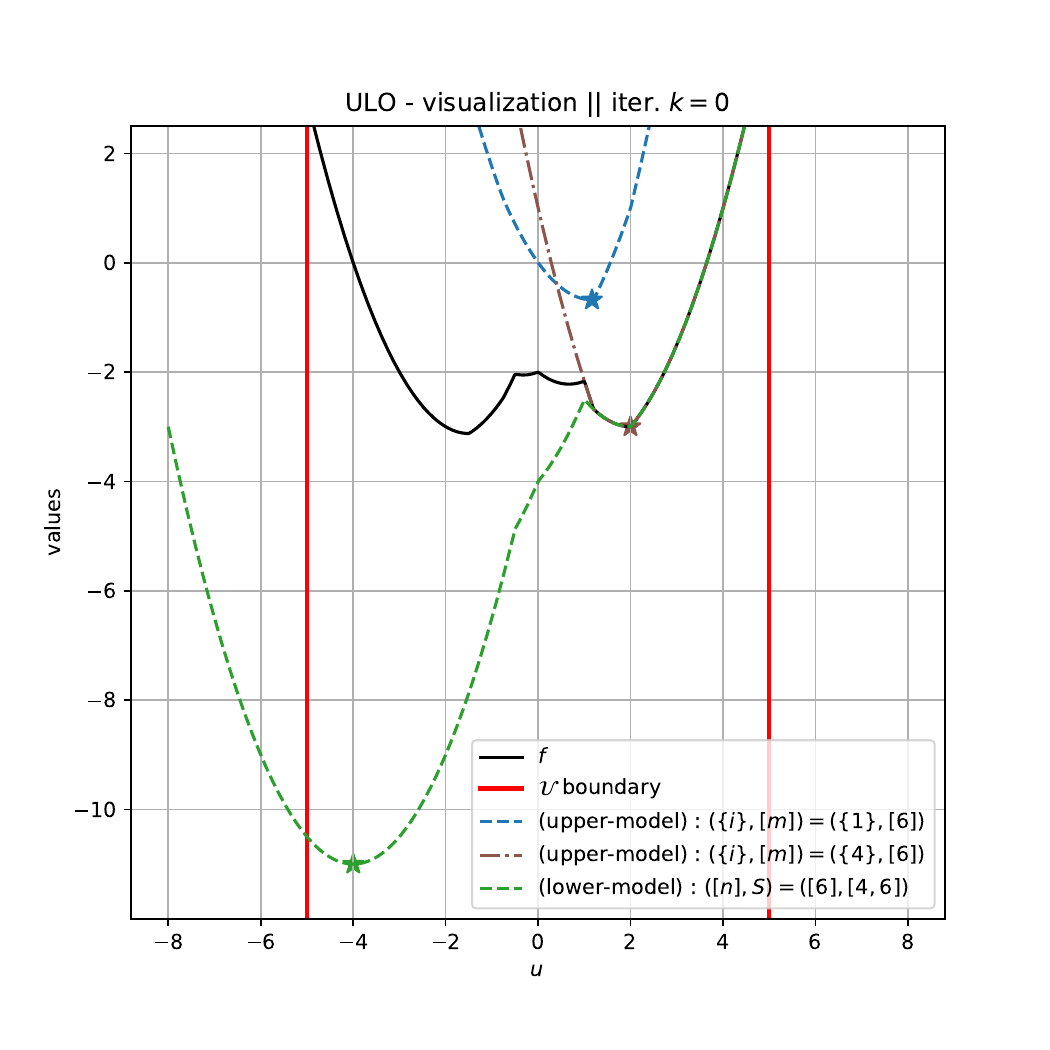} }}%
    \label{fig:visu_ulo}%
\end{figure}
\vspace{-20pt}
\begin{figure}[ht]%
    \centering
    \vspace{-20pt}
    \subfloat[\centering $H_2 = \{1,4,5\}$, $\bar{x}_{2}=(-3/2,-1/4)$, $\hat{F}_2 = -25/8$]{{\includegraphics[width=7cm]{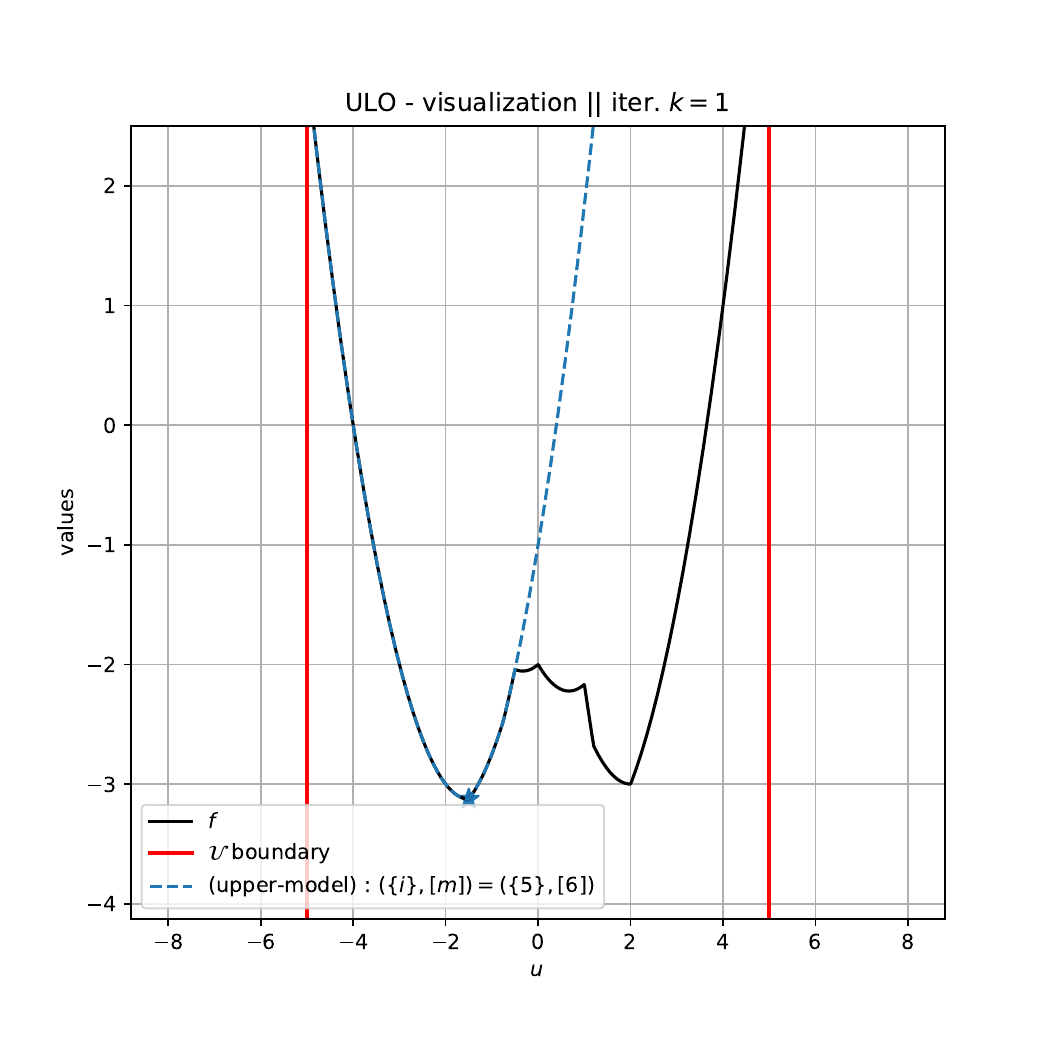} }}%
    \subfloat[\centering $S_2 = \{2,4,5,6\}$, $\check{x}_{2}=(-3/2,-1/4)$, $\check{F}_2 = -25/8$]{{\includegraphics[width=7cm]{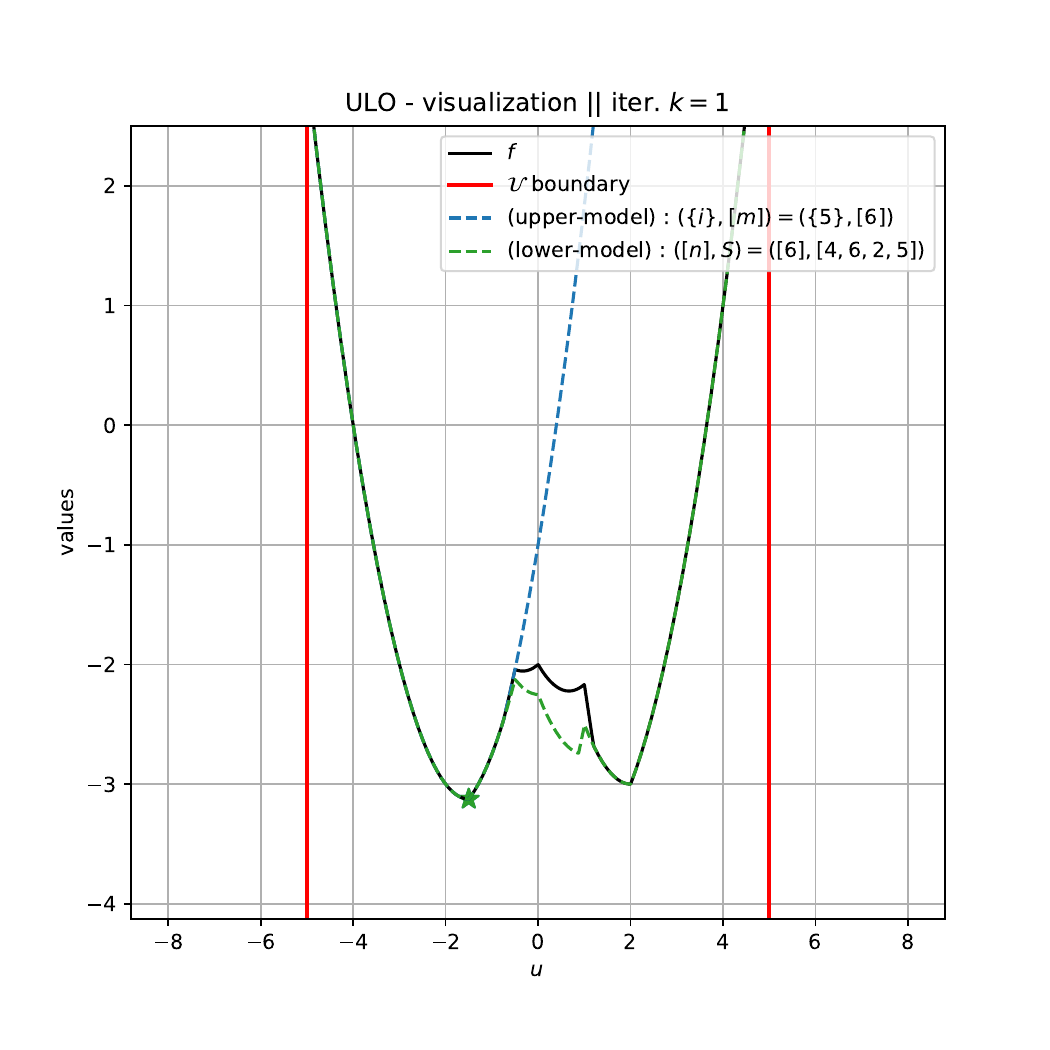} }}%
    \caption{Representation of $f(u)=F(u,\eta)$ and various iterates/models for every $(u,\eta) \in \mathcal{R}.$\\The gap $\hat{F}_2-\check{F}_2 = 0$ certifies that $x_2 = \bar{x}_2$ is \emph{globally optimal} whereas $x_1 = \bar{x}_1$ was only \emph{locally optimal}.}%
    \label{fig:visu_ulo2}%
\end{figure}
\newpage 
\noindent We now state the main theorem characterizing the convergence of \texttt{ULO}.
\begin{theorem}
\label{theorem_ulo} Let $\tilde{\epsilon}\geq0,\hspace{2pt}\epsilon \geq0$ be prescribed accuracies and let $\hat{i} \in [n]$ be any starting piece to test. $\texttt{ULO}\,(\hat{i},\tilde{\epsilon},\epsilon)$ returns a point $x_K \in \mathcal{X}$ such that \begin{equation}
    F(x_K)-F^* \leq \max\big\{\epsilon, \max\{1,|F(x_K)|\} \cdot \tilde{\epsilon}\big\}
\label{eq:opt_gap}
\end{equation}
in $K\leq \min\{n,m+1\}$ iterations, each of which involving a finite number of oracle calls. Moreover, $x_K$ is a local optimum of \eqref{eq:min_problem}.
\end{theorem}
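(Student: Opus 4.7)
The plan is to establish four claims in order: (i) every call to phase (a) and phase (b) involves finitely many oracle calls, (ii) the sandwich invariant $\hat{F}_k \geq F^* \geq \check{F}_k$ is maintained, together with the equality $\hat{F}_k = F(x_k)$, (iii) the outer \texttt{while} loop terminates in at most $\min\{n, m+1\}$ iterations, and finally (iv) the returned iterate is a local optimum of~\eqref{eq:min_problem}. The optimality-gap bound~\eqref{eq:opt_gap} then drops out from (ii) and the termination criterion of the \texttt{while} loop.

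For (i), I would observe that the inner loop of phase (a) strictly enlarges $H_+ \subseteq [n]$ at every iteration (one fresh index $\bar{i}$ per pass), while $V$ is always a subset of $[n]\setminus H_+$, hence the loop halts after at most $n$ oracle calls to $x^*(\bar{i})$. Phase (b) is explicitly bounded: it requires at most one oracle call per $i \in [n]\setminus H_+$ to compute $\nu(\{i\}, S_+)$. For (ii), the upper bound $\hat{F}_k \geq F^*$ follows from~\eqref{eq:upper_bound_gen}, since $\hat{F}_k = \min_{i \in H_k}\nu^{(i)}$, and the lower bound $\check{F}_k \leq F^*$ from~\eqref{eq:lower_bound_gen} applied to $\check{\nu}^{(\hat{i})} = \min_{i \in [n]}\check{\nu}^{(i)}$ together with the monotone $\max$-update. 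Both sequences are monotone by construction, and $\hat{F}_k = F(x_k)$ is preserved because phase (a) only replaces $x$ when a strictly better $\bar{x}$ is found.

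For (iii), I would show $|H_{k+1}| > |H_k|$ and $|S_{k+1}| > |S_k|$ at each outer iteration. The second is immediate from the random draw $\bar{j} \in [m]\setminus S$ inserted into $S_+$; the first holds because either phase (b) returned $\hat{i}_{k+1} \in H_{k+1}$, in which case the exact termination test on line~5 triggers, or $\hat{i}_{k+1} \notin H_{k+1}$ and phase (a) at the next iteration adds it as the first element of its visited set. After at most $n$ iterations one has $H = [n]$, so $\hat{F} = F^*$; after at most $m$ iterations $S = [m]$, so $\check{\nu}^{(i)} = \nu^{(i)}$ for every $i$, whence $\check{F} = F^*$. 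Either situation closes the gap and triggers exit, giving $K \leq \min\{n, m+1\}$. The bound~\eqref{eq:opt_gap} then follows since $F(x_K) - F^* \leq \hat{F}_K - \check{F}_K$ is at most the \texttt{while}-loop threshold upon exit.

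The main obstacle, and the content of (iv), is verifying local optimality of the output via Lemma~\ref{lemma:lo}. At the end of the last phase (a) call producing $x_K = \bar{x}$, the loop exits because $V = \mathcal{A}_\rho(\bar{x}) \setminus H_+ = \emptyset$, meaning every $i \in \mathcal{A}_\rho(\bar{x})$ has already been tested. For each such tested index $i$, either $i$ was the piece that produced $\bar{x}$ (so $\nu^{(i)} = F(\bar{x})$, as noted in Remark~\ref{rem:tightness}), or it was rejected under the rule $\nu^{(i)} \geq R$ with $R = F(\bar{x})$ at the moment of testing; because $R$ can only decrease when $\bar{x}$ itself is updated, this guarantees $\nu^{(i)} \geq F(\bar{x})$ for every $i \in \mathcal{A}_\rho(\bar{x})$. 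Hypothesis~\eqref{eq:scan} of Lemma~\ref{lemma:lo} is thus fulfilled with $i^*$ being the piece that generated $\bar{x}$, so $\bar{x}$ is a local minimum. The remaining detail is that when $x_{k+1}$ replaces an earlier incumbent $x_k$, the same argument applies to $x_{k+1}$, so the returned point is always the best local minimum visited.
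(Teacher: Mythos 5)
Your overall architecture --- the monotone sandwich $\check F_k \le F^* \le \hat F_k = F(x_k)$, termination by counting $|H_k|$ and $|S_k|$, the extra iteration accounting for the $+1$ in $\min\{n,m+1\}$, and local optimality via Lemma~\ref{lemma:lo} --- matches the paper's proof, and your claims (i)--(iii) are sound.

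The step in (iv) that does not hold as written is the assertion that every $i \in \mathcal{A}_\rho(\bar x)$ was either the generating piece or ``rejected under the rule $\nu^{(i)} \ge R$,'' hence $\nu^{(i)} \ge F(\bar x)$ for all active $i$. In phase (a) the set $V$ is always taken relative to $H_+ \supseteq H$, so a piece $i \in \mathcal{A}_\rho(\bar x) \cap H$ that was visited in an \emph{earlier} outer iteration is never re-tested against the current $R$; and for such a piece $\nu^{(i)} \ge F(\bar x)$ can genuinely fail when the sink $\bar x$ of the current walk is worse than the incumbent (for instance, $i$ could be the piece that produced the incumbent, with $\nu^{(i)} = \hat F < F(\bar x)$). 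The inequality for previously visited pieces must instead come from $\nu^{(i)} \ge \min_{i' \in H}\nu^{(i')} = \hat F$, which yields $\nu^{(i)} > F(\bar x)$ exactly in the case $F(\bar x) < \hat F$ where $\bar x$ is promoted to incumbent --- this is the two-case split ($\tilde{i} \in H$ versus $\tilde{i} \notin H$) that the paper makes explicit in its discussion following phase (a). Since only promoted points are ever returned, your conclusion survives, but hypothesis \eqref{eq:scan} has to be verified through that case analysis rather than through the rejection rule alone.
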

\begin{proof} We have already proven that $x_k$ is maintained as the best \emph{local minimum} encountered across the iterations and therefore, as suggested by Remark \ref{rem:tightness}, $\hat{F}_k = F(x_k)$. By construction, $\check{F}_k \leq F^*$ for any $k \in \mathbb{N}$ and thereby, 
$$F(x_k)-F^* = \hat{F}_k-F^* \leq \hat{F}_k-\check{F}_k.\vspace{-5pt}$$
We have shown that sequence $\{\hat{F}_k\}_{k \in \mathbb{N}}$ (respectively $\{\check{F}_k\}_{k \in \mathbb{N}}$) was monotonically decreasing (respectively increasing) so that the optimality gap is monotonically decreasing with $k \in \mathbb{N}$. Moreover, we noticed that in any case, $\hat{F}_k = F^*$ for every $k \geq n$ and $\check{F}_k = F^*$ for every $k \geq m$. Considering the first case, if $k=n$, it is certain that $\check{F}_n = F^* = \min_{i \in [n]}\,\nu^{(i)}$ since $H_{n} = [n]$. On the other hand, as previously mentioned, it is sure that $S_{k} = [m]$ for $k \geq m$. In such circumstance, $\check{F}_k = F^*$ since for any $i \not \in H_k$, $\check{\nu}^{(i)} = \nu(\{i\},S_k) = \nu(\{i\},[m]) = \nu^{(i)}$ (and, by definition, $\check{\nu}^{(i)} = \nu^{(i)}$ for every $i \in H_k$) thus $\check{F}_k = \min_{i \in [n]}\,\check{\nu}^{(i)} = \min_{i \in [n]}\,\nu^{(i)} = F^*$. Yet, is still possible that $F(x_k)=\hat{F}_k$ prevents \texttt{ULO} to exit at this iteration. Nevertheless, the suggested piece $\hat{i} = \arg \min_{i \in [n]}\,\check{\nu}^{(i)} \not \in H_k$ will lead to the next iterate $x_{k+1}$ being \emph{globally optimal}. Hence the closure of the optimality gap is guaranteed after $\min\{n,m+1\}$ iterations. The estimated gap $\hat{F}_k-\check{F}_k$ can fall below any level $\delta = \max\big\{\epsilon, \min\{1,|F(x_k)| \cdot \tilde{\epsilon}\}\big\}$ of approximate global optimality in $K < \min\{n,m+1\}$ iterations.
\end{proof}
\vspace{-20pt}
\subsection{DAG interpretation} 
\label{subsec:dag}
Let $\rho \geq0$ be fixed. Throughout this section, we assume for simplicity\begin{equation}i_+ \in\mathcal{A}_{\rho}(x^*(i))\,\wedge \,i_+\not=i\, \Rightarrow\, \nu^{(i_+)}<F(x^*(i))\leq \nu^{(i)}\quad \forall (i, i_+) \in [n]^2. \label{eq:dag_assumption}\end{equation} We remind that Lemma \ref{lemma_um} implies $\nu^{(i_+)} \leq f^{(i_+)}(x^*(i)) \leq F(x^*(i))+\rho\leq \nu^{(i)}+\rho$. \\That is, we consider that a \emph{piece} $i_+$ (nearly) achieving the minimum value among the $n$ \emph{pieces} at $x^*(i)$ (i.e. $F(x^*(i))$), when optimized itself on the feasible set $\mathcal{X}\,\cap\,\mathcal{D}^{([m])}$, yields $\nu^{(i_+)}$ strictly improving with respect to $F(x^*(i))$. Bearing this in mind, one can build a simple \emph{directed acyclic graph} $G([n],E)$ that summarizes what are the possible transitions from one candidate $i$ to another $i_+$ within \texttt{ULO}. Its vertices are the \emph{pieces} $i\in[n]$ with values $\nu^{(i)}$ and its set of edges $E$ is defined as 
\begin{equation}
    E := \big\{(i,i_+) \in [n]^2 \,|\,i_+ \in \mathcal{A}_{\rho}(x^*(i))\,\wedge\,i_+\not= i \big\}.
    \label{eq:graph_E}
\end{equation}
One can observe that, indeed,  \eqref{eq:dag_assumption} and \eqref{eq:graph_E} prevent $G$ from containing cycles. If a node $i_+$ is accessible from $i$ in $G$ then $F(x^*(i_+))\leq \nu^{(i_+)} < F(x^*(i))$. Then, it is impossible to trace back a path from $i_+$ to $i$ following vertices $\tilde{i}$ with values $F(x^*(\tilde{i})) < F(x^*(i_+))$. Let $W\subseteq [n]$. Let  $G[W]$ depict the subgraph of $G$ induced by vertices in $W$.\\ For every node $i$ of graph $G[W]$, we denote
$\textbf{outdeg}(i) = \big|\{i_+ \in W\,|\,(i,i_+)\,\in\,E\}\big|$.\newpage 
\noindent
Starting from a node $\hat{i} \in [n]$, phase (a) performs a random walk in $G[[n]\backslash H]$, until a \textcolor{blue}{sink node $i^*$} is reached (i.e. $i^* \in [n]$ with $\textcolor{blue}{\textbf{outdeg}(i^*) = 0}$), see Figure \ref{fig:dag} \& \ref{fig:dag2}. \\

\noindent At the end of phase (a), $H$ is augmented and now includes all the nodes visited alongside the directed path $(\hat{i} \to i^*)$. After phase (b), a new random walk is undertaken with the elected $\hat{i} \in [n]\backslash H$ and the overall procedure is run again.
\example In order to substantiate the present section, we expose the situation below with $n = 9$ and $\rho$ fixed in $[0,1/10]$. As usual, $\nu^{(i)} = f^{(i)}(x^*(i))$ for every $i \in [n]$.
\begin{table}[ht] 
\centering
\renewcommand*{\arraystretch}{1.35}
\begin{tabular}{|c||c|c|c|}
\hline
   \textbf{nodes} $i$ & $\rho$-\emph{active sets} $\mathcal{A}_\rho(x^*(i))$ & values of \emph{active pieces} at $x^*=x^*(i)$ & $\nu^{(i)}$\\
    \hline
    $1$ & $\{1\}$ & $f^{(1)}(x^*) = 5/2$ & $5/2$ \\
    $2$ & $\{4\}$ & $f^{(4)}(x^*) = 5/2$& $16/5$ \\
    $3$ & $\{3\}$ &  $f^{(3)}(x^*)=2$ & $\textcolor{ForestGreen}{2}$\\
    $4$ & $\{3,5\}$ &$f^{(3)}(x^*) = 8/3$, $f^{(5)}(x^*) = 8/3$  & $ 3$\\
    $5$ & $\{5\}$ & $f^{(5)}(x^*)= 7/3$&$7/3$\\
    $6$ &$\{4,9\}$ & $f^{(4)}(x^*) = 9/2$, $f^{(9)}(x^*) = 9/2-\rho/3$& $6$\\
    $7$ & $\{1,2,3\}$ & $f^{(1)}(x^*)= 4$, $f^{(2)}(x^*)= 4$, $f^{(3)}(x^*)= 4$ & $6$\\
    $8$ & $\{8\}$ & $f^{(8)}(x^*)= 5/2$ & $5/2$\\
    $9$ & $\{9\}$ & $f^{(9)}(x^*) = 4$& $4$ \\
    \hline
\end{tabular}
\vspace{5pt}
\caption{Summary of key characteristics of a possible \eqref{eq:min_problem} problem | \textcolor{ForestGreen}{$F^* = \nu^{(3)} = 2$}}
\label{tab:dag_example}
\end{table}

\noindent From this table, we can draw diagrams of Figure \ref{fig:dag} \& \ref{fig:dag2}. \\

\begin{figure}[h!]
\vspace{-10pt}
   \centering
  \hspace{50pt}
\begin{tikzpicture}[node distance={15mm}, thick, main/.style = {draw, circle}] 
\node[main] (1) {$\nu^{(7)}$}; 
\node[main,color=blue!] (0) [left of=1] {$\nu^{(1)}$};
\node[main] (2) [above right of=1] {$\nu^{(2)}$}; 
\node[main,color=blue!] (3) [below right of=1] {$\nu^{(3)}$}; 
\node[main] (4) [above right of=3] {$\nu^{(4)}$}; 
\node[main,color=blue!] (5) [above right of=4] {$\nu^{(5)}$}; 
\node[main] (6) [below right of=4] {$\nu^{(6)}$}; 
\node[main,color=blue!] (7) [right of=6] {$\nu^{(8)}$};
\node[main,color=blue!] (8) [above right of=7] {$\nu^{(9)}$};
\draw[->] (1) -- (2); 
\draw[->] (1) -- (3); 
\draw[->] (6) to [out=90,in=180,looseness=1.5] (8); 
\draw[->] (4) -- (3);
\draw[->] (2) -- (4); 
\draw[->] (4) -- (5); 
\draw[->] (1) -- (0);
\draw[->] (6) --  (4); 
\end{tikzpicture} 
\caption{initial graph | $G[[n]\backslash \emptyset]=G([n],E)$.}
\label{fig:dag}
\end{figure}
\begin{figure}[h!]
\vspace{-10pt}
   \centering
  \hspace{50pt}
\begin{tikzpicture}[node distance={15mm}, thick, main/.style = {draw, circle}] 
\node[main] (1) {$\nu^{(7)}$}; 
\node[main,color=blue!] (0) [left of=1] {$\nu^{(1)}$};
\node[main,color=blue!] (2) [above right of=1] {$\nu^{(2)}$}; 
\node[main,color=blue!] (3) [below right of=1] {$\nu^{(3)}$}; 
\node[main,color=blue!] (6) [below right of=4] {$\nu^{(6)}$}; 
\node[main,color=blue!] (7) [right of=6] {$\nu^{(8)}$};

\draw[->] (1) -- (2); 
\draw[->] (1) -- (3); 
\draw[->] (1) -- (0);
\end{tikzpicture} 
\caption{subgraph after two walks: $
\{4, 5\}$ and $\{9\}$ | $G[[n]\backslash H]$ with $H = \{4,5,9\}$.}
\label{fig:dag2}
\end{figure}
\remark No edge $(1,8)$ is present since $\nu^{(8)} = f^{(8)}(x^*(8)) = 5/2 \not< F(x^*(1)) = 5/2$. By \eqref{eq:dag_assumption}, $f^{(8)}(x^*(1))>5/2+\rho$ because otherwise, $8 \in \mathcal{A}_{\rho}(x^*(1))$ but then $\nu^{(8)}<5/2$.

\newpage 

\subsection{Efficiency simulations} 
As previously teased in our introduction (Section \ref{intro_notes}), the purpose of this section is to analyze, \emph{a priori}, what are the parameters, characterizing the structure and inherent difficulty of an instance of \eqref{eq:min_problem}, for which \texttt{ULO} is expected to outperform the baseline, i.e. the pure \texttt{enumeration strategy} as in \eqref{eq:enumeration_scheme_ref}.\\

\noindent First, we need to state another assumption regarding our oracle time complexity. 

\begin{assumption}
\label{A2} Let $i \in [n]$ and $S \subseteq [m]$.  
\begin{itemize}
\item[] There exists a strictly increasing function $T^{(i)} : [m] \to \mathbb{R}_+$ that outputs the wall-clock time $T^{(i)}(|S|)$ taken by our black-box oracle \eqref{eq:small_min_problem_i} to return $x^*(\{i\},S)$.
\end{itemize}
\end{assumption}

\remark Through Assumption \ref{A2}, we implicitly consider that the computational burden to compute $x^*(\{i\},S)$ is only influenced by the selected \emph{piece} $i \in [n]$ and the number of \emph{constraints} kept in $S \subseteq [m]$. The value of $T^{(i)}$ definitely depends on the solvers at hand, e.g.\@ active-set methods \cite{Wong11}, simplex methods \cite{Spielman04}, interior-point methods \cite{Nemi08}, etc. Yet, the nature of \emph{pieces}, e.g.\@ whether $\{(x,\nu) \in \mathcal{X}\times \mathbb{R}\,|\,f^{(i)}(x)\leq \nu\}$ is a polyhedral cone, second-order cone, exponential cone, etc., plays the most significant role in the definition of the function $T^{(i)}$ once the solver has been chosen.\\

\noindent At this stage, the total time taken by \texttt{ULO} (stopping after $K \in \mathbb{N}$ iterations) and the \texttt{enumeration strategy}, solely based on Assumption \ref{A2}, are as follows:
\begin{table}[ht]
\centering
\renewcommand*{\arraystretch}{1.85}
\begin{tabular}{|l|c||c|}
\hline
\textbf{Algorithm} & \texttt{Enumeration} & \texttt{ULO}\\
    \hline
    \hline
\textbf{Running time} & $\underbrace{\sum_{i\, \in \,[n]}\,T^{(i)}(m)}_{\text{full enumeration}}$ & $\underbrace{\sum_{i \in H_K}\,T^{(i)}(m)}_{\text{phase (a)}} + \underbrace{\sum_{k=1}^K\,\sum_{i\,\in\, [n]\backslash H_k}\,T^{(i)}(|S_k|)}_{\text{phase (b)}}$\\
\hline
\textbf{Guarantees} & \emph{global} optimality & $\epsilon$ \emph{global} or $\tilde{\epsilon}$ \emph{relative} optimality gap \eqref{eq:opt_gap} \\
\hline
\end{tabular}
\vspace{10pt}
\caption{General time complexities of \texttt{ULO} and the baseline \texttt{enumeration strategy}.}
\label{tab:complexities}
\end{table}

\noindent For every $k \in \{0,\dots,K-1\}$, let $\Delta H_{k+1} := H_{k+1}\backslash H_k$ contain the new \emph{pieces} tested at iteration $k$ and let $i^*_k$ (respectively $\bar{j}_k$) be the output \emph{piece} of phase (a) (respectively the drawn \emph{constraint} index $\bar{j}$, see line $1$ of Algorithm \ref{alg:phase_b}) during that same iteration. 
As such, without further assumptions on functions $\{T^{(i)}\}_{i\in[n]}$, Table \ref{tab:complexities} is not yet informative enough to predict situations, symbolized by sequences like 
\begin{equation}
\bigg(\Delta H_1,S_1,\dots,\Delta H_{K},S_K\bigg), \quad S_{k+1} = \bigcup_{\tilde{k}=0}^{k}\,\mathcal{C}(i^*_{\tilde{k}})\,\cup\,\{\bar{j}_{\tilde{k}}\} \quad \forall k \in \{0,\dots,K-1\},
\label{eq:gen_scenario}
\end{equation}
for which the expected time complexity of \texttt{ULO} would be smaller than \texttt{enumeration}'s. \\ 

\noindent Therefore, we present Assumption \ref{A3}, narrowing down the scope of possible interpretations for $\{T^{(i)}\}_{i\in[n]}$. As suggested in our introduction and supported by \cite{Arkadi94}, we introduce a simple complexity model mirroring interior-point methods efficiency \cite{Nemi08} since the use of these latter prevails in our numerical experiments.  
\newpage 
\begin{assumption}
\label{A3}  There exists constants $r>0$ and $C >0$ such that 
\begin{equation}
T^{(i)}(|S|) = T(|S|) = C + |S|^{r}\quad\forall i \in [n]. \label{eq:cost_model}
\end{equation} 
\end{assumption}
\vspace{-5pt}
\remark Constant $C$ serves to better capture the practical behaviour of numerical solvers, usually involving a problem-dependent fixed computational cost to store data in memory and/or encode high-level problem's specification as workable data structures (e.g.\@ matrices of a standard conic formulation, see \textit{matrix stuffing} \cite{Wytock16}).
\vspace{-8pt}
\paragraph*{Practical considerations} Although not mentioned textually, for the sake of clarity, within the pseudo-code of Algorithm \ref{alg:ulo}, it is in practice worth allowing \texttt{ULO} to exit earlier as soon as the update of $\hat{F}$ yields a certificate that   $\hat{F}-\check{F} \leq \max\big\{\epsilon, \max\{1,|\hat{F}|\} \cdot \tilde{\epsilon}\big\} $
 holds for the user prescribed accuracies $\tilde{\epsilon}, \epsilon \geq 0$. That is, \texttt{ULO} can be stopped since the output iterate satisfies the (approximate) optimality requirements and no more computational work should be undertaken. Another possible practical modification resides in storing, if the memory space allows it, subsequent oracle outputs $x^*(\{i\},S_{k+1})$ computed at iteration $k$ for every $i \in [n]\backslash H_{k+1}$. These vectors will serve to \emph{warm-start} problems $\nu(\{i\},S_{k+2})$ at the next iteration, i.e. $k+1$, for every $i \in [n]\backslash H_{k+2}$, reducing in practice the time cost of phase (b), theoretically $(n-|H_{k+2}|)\cdot(C+ |S_{k+2}|^r)$.\\ \vspace{-8pt}
\paragraph*{Simulator}
It order to simulate \texttt{ULO} on various (possible) instances of \eqref{eq:min_problem} and then draw lessons about its efficiency, one must build models that describe relationships between all its constitutive elements. For instance, one must quantify how crude lower bounds $\nu(\{i\},S)$ on \emph{pieces}' values $\nu^{(i)}$ are. Another important feature to fix is deciding which and how many constraints are \emph{active} at points $\{x^*(i)\}_{i\in[n]}$. 
Through the lens of the DAG interpretation and according to \eqref{eq:dag_assumption} and \eqref{eq:graph_E}, it amounts to elect outgoing edges in the graph abstraction $G([n],E)$. We specify our model in Table \ref{tab:simulator}. \\

\noindent As usual, $i \in [n]$, $S\subseteq [m]$ and $(\bar{i},\bar{i}_+) \in E$.\\
\vspace{-10pt}
\begin{table}[ht] 
\centering
\renewcommand*{\arraystretch}{1.5}
\begin{tabular}{|c||c|}
\hline
\textbf{expressions} & parameters \\
    \hline    $|\mathcal{C}(i)| =  \varsigma_{\text{act}} \cdot m, \quad  \quad \mathbb{P}[\mathcal{C}(i)] \propto \sum_{j \in \mathcal{C}(i)}\,j^\upsilon$ & $\upsilon>0$, $\varsigma_{\text{act}} \in (0,1)$\\
    $\nu^{(\bar{i}_+)} \leq \nu^{(\bar{i})} - \text{Exp}(\theta)$ & $\theta > 0$  \\
    $\nu^{(i)}-\nu(\{i\},S)= \min\bigg\{ \bigg(1-\frac{|\mathcal{C}(i) \,\cap \,S|}{|\mathcal{C}(i)|}\bigg)\, ,\, \bigg(1-\frac{|S|}{m}\bigg)^{3/2}\bigg\} \cdot \text{Exp}(\bar{\theta})$ & $\bar{\theta}>0$ \\
    $\sum_{i=1}^n\,\textbf{outdeg}(i) = \varsigma_{\text{deg}} \cdot \frac{n\cdot(n-1)}{2}$ & $\varsigma_{\text{deg}} \in (0,1)$
\\
    \hline
\end{tabular}
\caption{Summary of simulator's parametrized stochastic models.}
\label{tab:simulator}
\end{table}
\vspace{-5pt}
\remark \noindent Viewing $\varsigma_{\text{act}}\cdot m$ as the largest size among the \emph{active constraints} sets $\{\mathcal{C}(i)\}_{i\in[n]}$, our model is intentionally adversarial so that $\bar{\mathcal{C}}$ (see \eqref{eq:union_C}) is the biggest possible, hence rendering phase (b) of \texttt{ULO} more expensive. We want our simulated results to be robust against such bad situations so that if \texttt{ULO} wins (i.e. takes less computational time units to exit),  it would win even more in more realistic situations.

\newpage 

Let us explain the role of each parameter involved in Table \ref{tab:simulator}.\vspace{5pt}
\begin{itemize}
    \item Parameter $\upsilon$ induces a selection bias towards certain constraints. The biggest $\upsilon$ is, the less diversity there will be in the $\varsigma_{\text{act}}\cdot m$ constraints of sets $\big\{\mathcal{C}(i)\big\}_{i\in[n]}$.\\
    Obviously the size of $\bar{\mathcal{C}}$ will likely drop with $\varsigma_{\text{act}}$ getting smaller and $\upsilon$ getting bigger. This intuition will be confirmed on Figure \ref{fig:appetizer} (a).
    \\
    \item 
   Parameter $\theta$ serves as a reference quality difference between two adjacent \emph{pieces} in the sense of the DAG interpretation, it can also be seen as the expected functional value gain one will observe by moving from \emph{piece} $\bar{i}$ to $\bar{i}_+$.\\ 
    \item Parameter $\bar{\theta}$ describes how far, on average, is the worst lower bound $\nu(\{i\},\emptyset)$ from $\nu^{(i)}$. In addition, as underlined in \eqref{eq:neutral}, if $S \supseteq \mathcal{C}(i)$, $\nu(\{i\},S)$ must equal $\nu^{(i)}$. Also, the more constraints were added to $S$, the tightest should the lower bounds be. Based on these observations, we derived the third model of Table \ref{tab:simulator}.
    We can expect an abstract instance of \eqref{eq:min_problem} to get more difficult as $\bar{\theta}$ grows.\\
    \item Parameter $\varsigma_{\text{deg}}$ represents the fraction of (directed) edges present in $G([n],E)$ with respect to the complete (undirected) graph with $n\cdot(n-1)/2$ edges. Parameter $\varsigma_{\text{deg}}$ plays a crucial role in the time spent within phase (a) of \texttt{ULO} since the number of edges influences the number of paths within $G([n],E)$ hence the number of consecutive \emph{pieces} one visits at each iteration $k$ of Algorithm \ref{alg:ulo}.\\
\end{itemize}
To create a single (abstract) instance of \eqref{eq:min_problem} in accordance with the specifications of Table \ref{tab:simulator}, we proceed as follows. Implementation details are freely available on \href{https://github.com/guiguiom/ULO_public}{GitHub}.
\begin{enumerate}
\item \textbf{DAG generation} | $G([n],E)$\\
\begin{itemize}
\item We start from scratch, i.e. $E = \emptyset$, and iteratively add new edges to $E$. To that purpose, we sample one edge candidate uniformly at random among remaining possibilities, initially corresponding to the whole set $[n]^2$. That edge is accepted if no cycle is created or removed from the remaining possibilities otherwise. We loop until $\varsigma_{\text{deg}} \cdot n(n-1)/2$ (directed) edges are added.\\
\end{itemize}
\item \textbf{Values assignment} | $\{\nu^{(i)}\}_{i \in [n]}$ \\
\begin{itemize}
\item For every $i \in [n]$ and $(\bar{i},\bar{i}_+) \in E$, we draw independently $\theta_{(\bar{i},\bar{i}_+)} \sim \text{Exp}(\theta)$, $\bar{\theta_{i}} \sim \text{Exp}(\bar{\theta})$ and $\mathcal{C}(i)$ of size $\varsigma_{\text{act}} \cdot m$. This latter is achieved without replacement and following the probability distribution of the first row of Table \ref{tab:simulator}. We point out that this distribution is non-uniform, unless $\upsilon=0$. Then, we solve the LP \eqref{eq:assign_abstract} to obtain $\mathbf{F}^{(i)} = F(x^*(i))$ and $\nu^{(i)}$ for every $i \in [n]$. Without loss of generality, we fixed a reference upper bound $\Lambda = 10$. Finally, we opt for a threshold $\xi \simeq 10^{-7}>0 $ for strict inequalities \eqref{eq:graph_E}.
\begin{equation}
\begin{aligned}
\max_{\mathbf{F} \,\leq\, \nu\, \leq \,\,\Lambda\cdot\mathbf{1}_n} \quad & \langle \mathbf{1}_n, \nu\rangle \quad & 
\label{eq:assign_abstract}\\
\textrm{s.t.} \quad & \nu^{(\bar{i}_+)} \leq \nu^{(\bar{i})}-\theta_{(\bar{i},\bar{i}_+)} \quad & \forall (\bar{i},\bar{i}_+)\in E \\ 
 \quad & \nu^{(\bar{i}_+)} \leq \mathbf{F}^{(\bar{i})}-\xi \quad & \forall (\bar{i},\bar{i}_+)\in E \\ 
  \quad & \nu^{(i)} = \mathbf{F}^{(i)} \quad & \forall i \in [n],\,\textbf{outdeg(}i\textbf{)} = 0. \\ 
\end{aligned}
\end{equation}
\end{itemize}
\end{enumerate}

\newpage 
\begin{figure}[ht]%
    \hspace{-50pt}
    \vspace{-20pt}
    \subfloat[\centering ]{{\includegraphics[scale=0.4]{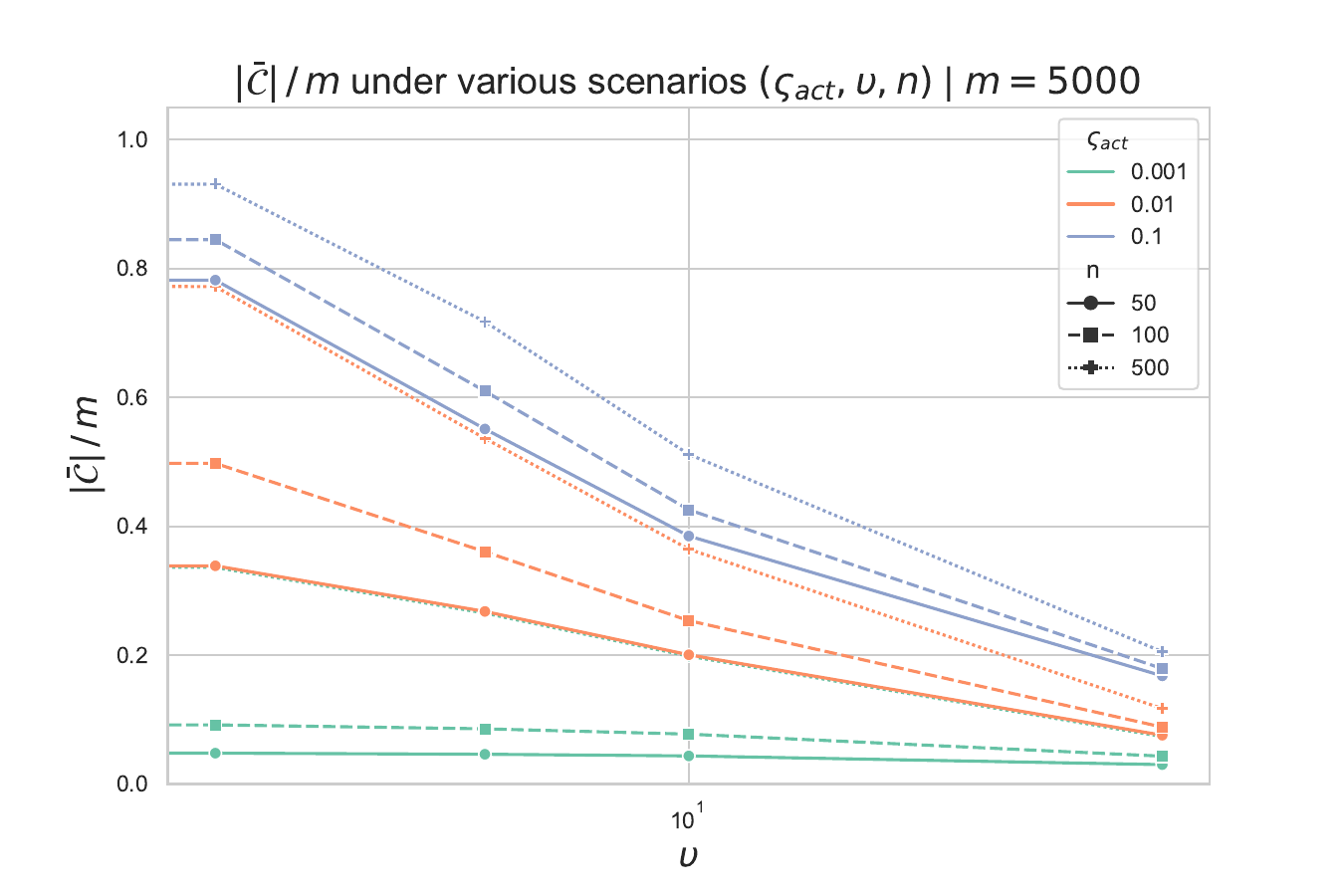} }}%
    \subfloat[]{{\includegraphics[scale=0.4]{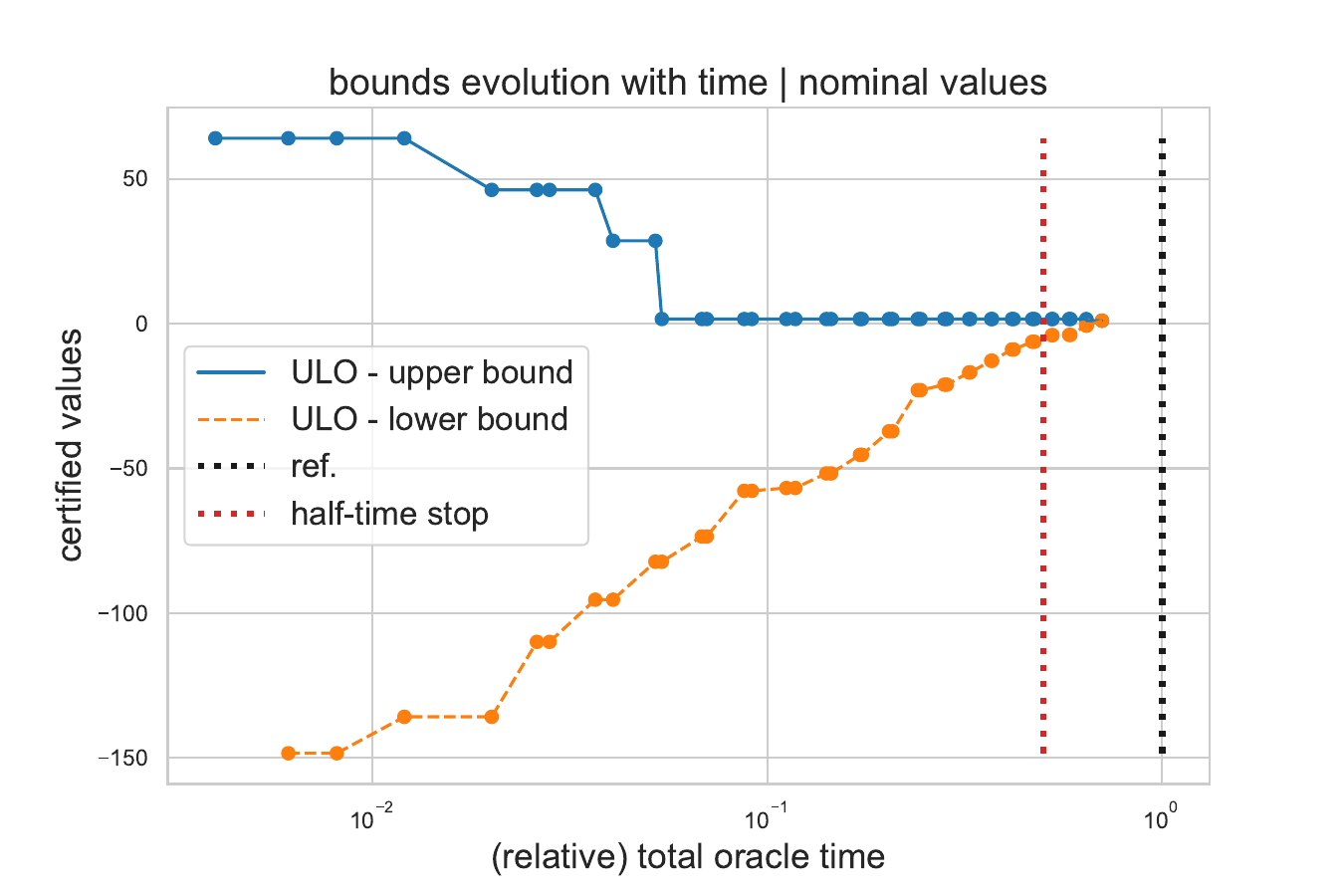} }}%
    \vspace{10pt}
    \caption{Typical trends.}%
    \label{fig:appetizer}%
\end{figure}
\vspace{-15pt}
\begin{figure}[ht]%
     \hspace{-50pt}
    \vspace{-20pt}
    \subfloat[]{{\includegraphics[scale=0.4]{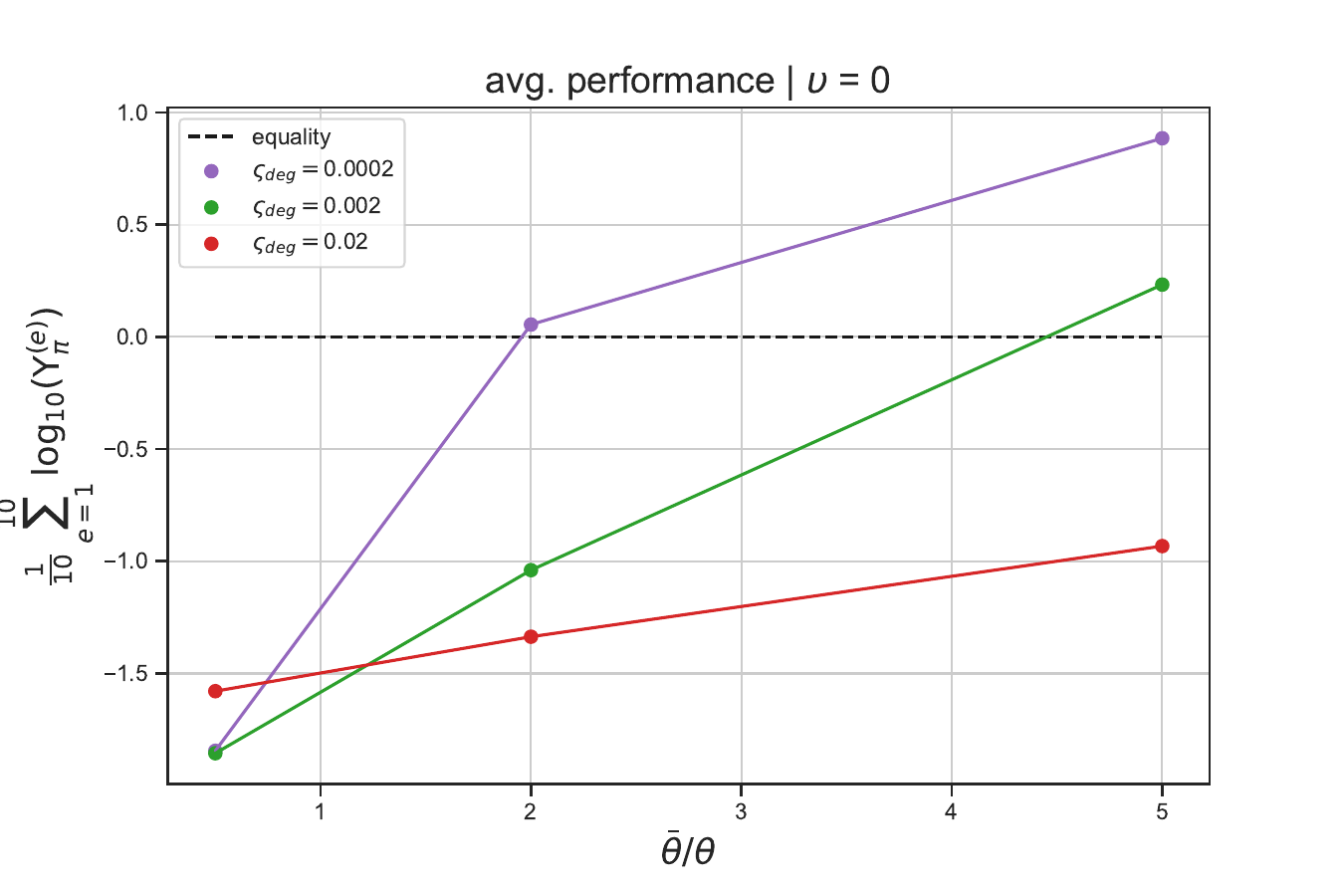} }}%
    \subfloat[]{{\includegraphics[scale=0.4]{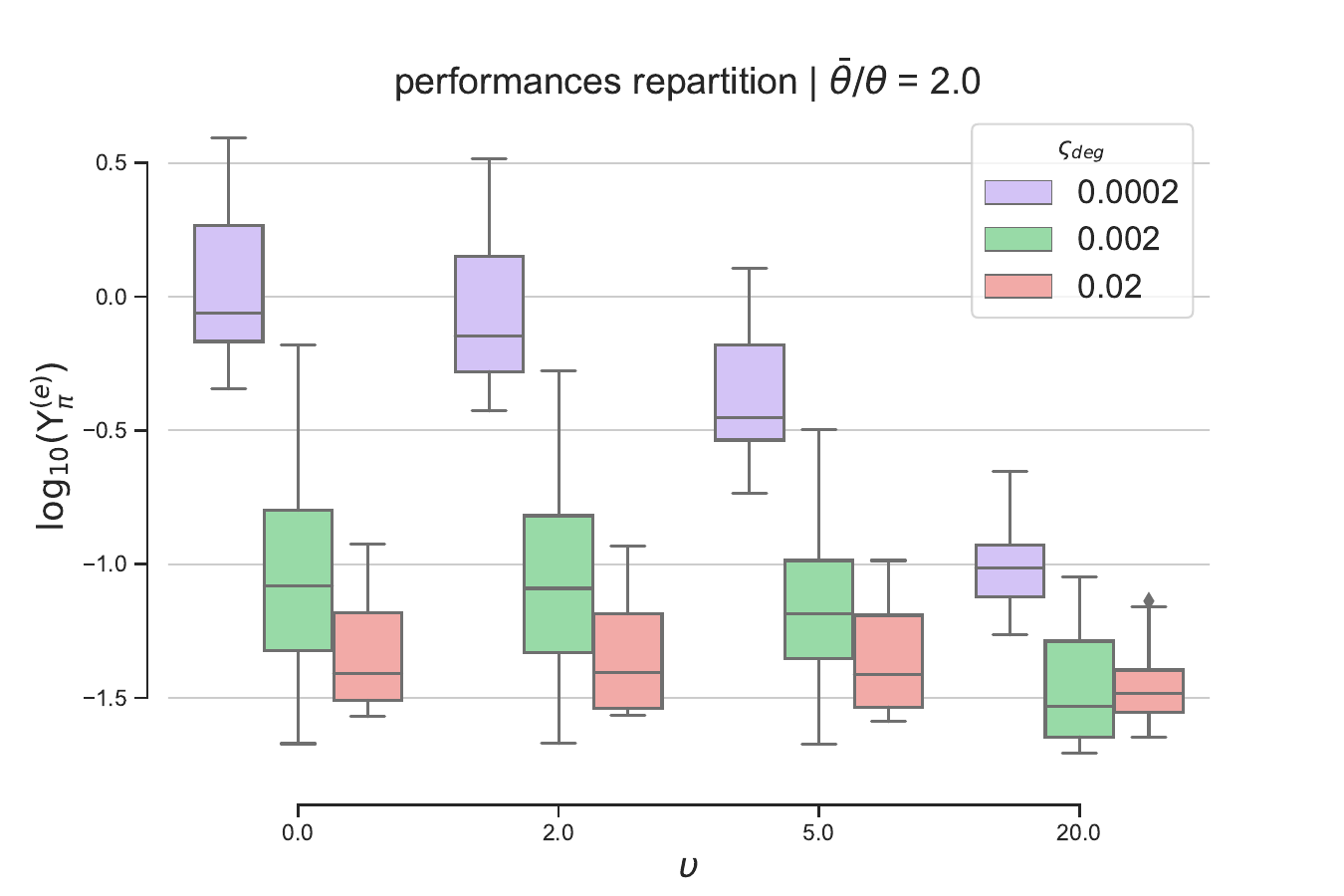} }}%
    \vspace{10pt}
    \caption{Macroscopic insights.}%
    \label{fig:menu}%
\end{figure}
\noindent Once an abstraction is generated, we can simulate Algorithm \ref{alg:ulo} without having to wait for the computations of $x^*(i)$ or $x^*(\{i\},S)$ for numerous values of $i \in [n]$ and $S \subseteq [m]$ as all we actually care about are the quantities $\nu^{(i)}$, $\nu(\{i\},S)$, $F(x^*(i))$ and sets $\mathcal{C}(i)$. Each time a call to the black-box \eqref{eq:small_min_problem_i}
would have been performed in practice to compute $x^*(\{i\},
S)$, we increment a counter by $T(|S|)= C + |S|^r$ units, per Assumption \ref{A3}. The total time taken by the \texttt{enumeration scheme} and by \texttt{ULO} after $k \in \mathbb{N}$ iterations are \begin{align}
   \Gamma_{\texttt{ES}}(k) &= k \cdot \big(C +m^r\big), \label{eq:cost_es_k}\\
   \Gamma_{\texttt{ULO}}(k) &= |H_k|\cdot \big(C+m^r\big)+\sum_{\tilde{k}=1}^{k}\,(n-|H_{\tilde{k}}|)\cdot\big(C + |S_{\tilde{k}}|^r\big).\label{eq:cost_ulo_k}
\end{align}
where one recalls the identities of Table \ref{tab:complexities}. Let $K \in \mathbb{N}$ be the number of iterations performed in a run of \texttt{ULO}, the ratio between total times reads 
\begin{equation}
\Upsilon = \Gamma_{\texttt{ULO}}(K) / \Gamma_{\texttt{ES}}(n).
\label{eq:ratio_perf}
\end{equation}
\newpage 
\begin{table}[ht] 
\centering
\renewcommand*{\arraystretch}{1.5}
\begin{tabular}{c||c}
\hline
\textbf{dimensions / parameters / tolerances} & nominal values \\
    \hline
$(n,m)$ & $(5 \cdot 10^2,10^4)$\\
\hline
$\varsigma_{\text{act}}$ & $10^{-2}$ \\ 
\hline
$(C,r)$ & $(1118,3/2)$\\
\hline
$\theta$ & $10$\\
\hline
$(\tilde{\epsilon},\epsilon)$ & $(5\cdot 10^{-2},10^{-3})$\\
    \hline
\end{tabular}
\vspace{5pt}
\caption{Nominal values of the parameters of our abstract model.}
\label{tab:simulator_parameters}
\end{table}
\vspace{-20pt}
\paragraph*{Results}
To obtain Figure \ref{fig:appetizer} (b) and Figure \ref{fig:menu} (a) \& (b), we used the dimensions and parameters prescribed in Table \ref{tab:simulator_parameters}. Ranges for other parameters are given below. Although there were several possible values for $(C,r)$, we sticked to $r=3/2$ as it was motivated in our introductory section and led to clear interpretable results. Once $r$ was fixed, we decided that $C$ should match the cost of a thousandth of a full oracle call, i.e. $T(m)$ (see Assumption \ref{A3}). Note that the bigger value of $C$, the best for \texttt{ULO} since it requires fewer outer-iterations than the $n$ outer-iterations of the \texttt{enumeration}.\\

\noindent A single run of \texttt{ULO} is displayed on Figure \ref{fig:appetizer} (b) where one can observe a typical trend. The upper bound $\hat{F}$ reaches $F^* = 1$ quite rapidly, i.e. after $\sim 10^{-1}\,\Gamma_{\texttt{ES}}(n)$ units of time. This behaviour will also be observed in practice in our real numerical experiments (Section \ref{num_experiments}). Here, the triplet $(\bar{\theta},\upsilon,\varsigma_{\text{deg}})$ was fixed to $(3,5,2\cdot10^{-3})$. \\ \\ 
For graphs in Figure \ref{fig:menu},  we generated $10$ abstract instances of \eqref{eq:min_problem} for each triplet $(\bar{\theta},\upsilon,\varsigma_{\text{deg}}) \in \{5,20,50\} \times \{0,2,5,20\} \times \{2\cdot10^{-4},2\cdot10^{-3},2\cdot10^{-2}\}$ and we averaged the overall (theoretical) times of \texttt{ULO} over $250$ random starting pieces $\hat{i} \in [n]$. That is, for each triplet $\pi = (\bar{\theta},\upsilon,\varsigma_{\text{deg}})$ and each instance $e\in [10]$, we sampled uniformly at random a $250$-subset of $[n]$, namely $P_\pi^{(e)}$, and ran the simulation for each $\hat{i} \in P_\pi^{(e)}$ to obtain $\Gamma_{\texttt{ULO}}(K^{(e)}_\pi(\hat{i}))$ where $K^{(e)}_{\pi}(\hat{i})$ is the number of iterations \texttt{ULO} took to complete when started from $\hat{i}$ under scenario $\pi$ for the $e$-th abstract problem. We finally aggregate and normalize the performances associated with each abstract problem in the following way,
$$\Upsilon^{(e)}_\pi = \Gamma_{\texttt{ULO},\pi}^{(e)}/ \Gamma_{\texttt{ES}}(n) = (\Gamma_{\texttt{ES}}(n))^{-1}\,\sum_{\hat{i}\in P^{(e)}_\pi}\,\big|P_\pi^{(e)}\big|^{-1}\,\Gamma_{\texttt{ULO}}(K^{(e)}_\pi(\hat{i})) \vspace{-10pt}$$
The exit time of \texttt{ULO} crucially depends on the quality of lower bounds (tuned by $\bar{\theta}$) with respect to the expected difference $\theta$ between two adjacent \emph{pieces} (cfr. Table \ref{tab:simulator}). Hence, we fixed $\theta$ arbitrarily and analyzed different regimes for $\bar{\theta}$, i.e. $\bar{\theta}/\theta \in \{1/2,2,5\}$. Not surprisingly, for lower values of $\bar{\theta}$, \texttt{ULO} clearly outperforms the \texttt{enumeration scheme}, e.g.\@ $10^{-1}\,\sum_{e=1}^{10}\,\log_{10}(\Upsilon^{(e)}_\pi)<-1$ for $\bar{\theta}/\theta=1/2$ as depicted on Figure \ref{fig:menu} (a) with $\upsilon=0$. This is because one obtains a decent $\check{F}\leq F^*$ even when the set of considered \emph{constraints} $S$ is small, at little computational expense in phase (b) of \texttt{ULO}. How impressive it might already look, we note that this setting (i.e. $\upsilon=0$) translates an uniform sampling among the \emph{constraints} to construct the sets of \emph{active constraints} $\mathcal{C}(i)$ for every $i\in [n]$ so that $|\bar{\mathcal{C}}|$ (size of their union) is the biggest (cfr. Figure \ref{fig:appetizer} (a)), usually detrimental for \texttt{ULO}. Therefore, the actual benefits of \texttt{ULO} grow with the value of $\upsilon>0$ as one can observe on Figure \ref{fig:menu} (b). Nevertheless, on that same Figure \ref{fig:menu} (b), we expose results that are more contrasted as we consider a larger ratio $\bar{\theta}/\theta=2$. When $\varsigma_{\text{deg}}$ is small, $|E|$ drops and the graph $G([n],E)$ is not well connected. Consequently, there exists a fair number of \emph{local-minima} (scaling with nodes $i^*\in[n]$ such that $\textbf{outdeg}(i^*)=0$ in $G([n],E)$) and phase (a) of \texttt{ULO} will encounter difficulties in decreasing $\hat{F}$ multiple times in a row since long paths within $G([n],E)$ are limited by the small quantity of edges. Thereby, in general, \texttt{ULO} takes advantage of a good connectivity between \emph{pieces} of a \eqref{eq:min_problem} problem, represented by a graph $G([n],E)$ with more edges. Interestingly enough when $\bar{\theta}/\theta=1/2$, on Figure \ref{fig:menu} (a), the relative performance of \texttt{ULO} with respect to \texttt{enumeration} is predicted to be worst for the largest value of $\varsigma_{\text{deg}}$. We explain it as follows. Looking closely at the recorded data, for almost every simulation with $\pi=(5,0,\varsigma_{\text{deg}})$, there were only $K=2$ outer-iterations meaning that problems were really easy for \texttt{ULO}. Phase (b) likely provided a good $\hat{i}\in[n]$ at the end of the first outer-iteration based on which the second call to phase (a) led from $\hat{i}$ to $i^*$ with $i^*$ being an optimal \emph{piece}. However, both calls to phase (a) were presumably more expensive when $\varsigma_{\text{deg}}$ was the largest since $G([n],E)$ is more connected.
\paragraph*{Takeaway message} A clear correlation between $|\bar{\mathcal{C}}|$ and the efficiency of \texttt{ULO} can be established and is driven by parameters $(\varsigma_{\text{act}}, \upsilon)$: the smaller $|\bar{\mathcal{C}}|$, the better \texttt{ULO} is expected to perform. Moreover, \texttt{ULO}'s efficiency heavily depends on the quality of lower bounds $\{\nu(\{i\},S)\}_{i\,\in\,[n]}$ on \emph{pieces}' values $\{\nu^{(i)}\}_{i\,\in\,[n]}$ for sets $S \subseteq [m]$. When $\nu^{(i)}-\nu(\{i\},S)$ vanishes relatively quickly to $0$ for $|S|\ll m$ then \texttt{ULO} should beat the \texttt{enumeration} strategy.
\vspace{-10pt}
\section{Numerical Experiments}
\label{num_experiments}
In this last section, we benchmark three algorithms to solve actual (generalized) problem instances based on Example \ref{example:orlp}. Aside from \texttt{ULO} and the \texttt{Enumeration Scheme} (\texttt{ES}), we also put into test a third procedure, \texttt{Restart Alternating Minimization} (\texttt{RAM}), which is a mix between \texttt{ULO} and \texttt{ES}.
\paragraph*{Restarted Alternating Minimization}
As developed in the previous section, starting from a piece $i \in [n]$, one can jump to another piece $i_+ \in \mathcal{A}_{\rho}(x^*(i))$ while expecting a functional decrease or, at least, a functional increase of a most $\rho$ (see Lemma \ref{lemma_um}). \\We have observed that iterating these jumps until we are stuck amounts to a random walk in a particular directed (sub)graph made of all the nodes matched with \emph{pieces} that were not visited yet. This corresponds to phase (a) of \texttt{ULO}. Actually, when $\rho \to 0$, it can be seen as an alternating minimization scheme. To see this, we reformulate the discrete minimum over the $n$ \emph{pieces} using variables $q \in \Delta^n$ and rewrite \eqref{eq:min_problem} as 
\begin{equation}
F^* = \min_{x \,\in\, \mathcal{X}\, \cap\, \mathcal{D}^{([m])},\,q\, \in \,\Delta^n}\,\sum_{i=1}^{n}\,q^{(i)}\,f^{(i)}(x)
\end{equation}
Every \emph{piece} $i \in [n]$ is uniquely encoded by $q^{(i)} = 1$ and $q^{(i')}=0$ for every $i' \in [n]\backslash \{i\}$. Let $x \in \text{dom }F$ be fixed. If $i_+ \in \mathcal{A}_0(x)$, the vector $q_+$ that encodes $i_+$ (as described in the last sentence) must satisfy 
$$q_+ \in \arg \min_{q\, \in \,\Delta^n}\,\sum_{i=1}^{n}\,q^{(i)}\,f^{(i)}(x).$$
Conversely, fixing $q \gets q_+$ and minimizing over $x$ this time, $x_{++} = x^*(i_+)$ satisfies
$$x_{++} \in \arg \min_{x \,\in \,\mathcal{X}\, \cap \,\mathcal{D}^{([m])}}\,\sum_{i=1}^{n}\,q_+^{(i)}\,f^{(i)}(x).$$
Phase (b) has been devised to produce meaningful bounds on $F^*$ and provide a clever restart of phase (a), which are the main principles behind \texttt{ULO}. One could adopt a strategy that, instead of executing phase (b) as in \texttt{ULO}, randomly restart phase (a) when reaching a sink. Unlike the \texttt{Enumeration Scheme} that tries every piece $i \in [n]$ in a totally random order (i.e. computes the values $\{\nu^{(i)}\}_{i \in [n]}$ one-by-one according to a given permutation of $[n]$ to solve \eqref{eq:min_problem}), one can expect to initialize phase (a) with a piece $\hat{i} \in [n]$ and follow a directed path leading, in just a few alternating minimization steps, to an optimal \emph{piece} $i^*$. Therefore, we name this intermediate strategy \texttt{Restarted Alternating Minimization} (\texttt{RAM}). We suspect that it will reach, on average, an upper bound $\hat{F} = F^*$ quicker than (\texttt{ES}). To point out the eventual benefits of phase (b) alone, we also account for $\rho=10^{-3}$ in phase (a) of \texttt{Restarted Alternating Minimization}, just like for \texttt{ULO}. They only differ by the way they restart their local-search phase. That being stated, the effect of a small $\rho$ remains, however, limited and we do not observe substantial variations when letting $\rho \to 0$.


\subsection*{Experiment | Pessimistic-Optimistic Piecewise-Linear Programming} 

We chose a parametric piecewise-linear optimization problem. Our problem originates from the continuous interpolation between \emph{optimistic} \cite{Norton2017} and \emph{pessimistic} perturbed linear programming arising in market investment optimization. That is, one disposes of $n$ market scenarios $\{(\beta^{(i)},\gamma^{(i)})\}_{i\in[n]}$ stemming from possible deviations with respect to a \emph{nominal market} $(\bar{\beta},\bar{\gamma}) \in \mathbb{R}^p_+ \times \mathbb{R}_{+}$. From each tuple $(\beta^{(i)},\gamma^{(i)})$, $\beta^{(i)} = (\beta^{(i)}_{1},\dots,\beta^{(i)}_p)$ describes the expected returns for assets $1$ to $p$ of market $i \in [n]$, expressed for one unit of investment whereas $\gamma^{(i)}$ denotes a fixed fee cost entering that specific market. In a \emph{pessimistic} setting, one would like to maximize the expected profit assuming that the worst market scenario would occur. On the contrary, the \emph{optimistic} setting considers that one can choose which market conditions will apply and then optimize the investments accordingly. Let $R>0$ be the amount of investment units at disposal. We also take into account $m$ maximal investment conditions $\{(v^{(j)},W^{(j)})\}_{j \in [m]}$, i.e. $\langle v^{(j)},u\rangle \leq W^{(j)}$ for every $j \in [m]$. Finally, it is allowed for these constraints to be violated by a positive amount of at most $\eta$. This latter will be (heavily) penalized in the objective by $-C_{\text{pen}} \cdot \eta$ ($C_{\text{pen}} \sim 5\cdot 10^4$ in our experiments). The full problem reads
\begin{equation}
\begin{aligned}
\max_{(u,\eta) \,\in \,\mathcal{U}\times\mathbb{R}_+} \quad &  -C_{\text{pen}}\cdot \eta +\omega\cdot\bigg(\min_{\tilde{i}\in[n]}\, \langle \beta^{(\tilde{i})},u\rangle -\gamma^{(\tilde{i})}\bigg) + (1-\omega)\cdot \bigg(\max_{i\in[n]}\, \langle \beta^{(i)},u\rangle-\gamma^{(i)}\bigg)
\label{eq:exp_pl_opt}
\\
\textrm{s.t.} \quad & \max_{j\in [m]}\,\langle v^{(j)},u\rangle -W^{(j)}-\eta\leq 0 
\end{aligned}
\tag{POPLP}
\end{equation}
\\
with parameter $\omega \in [0,1]$ allowing a trade-off between \emph{pessimistic} ($\omega=1$) and \emph{optimistic} ($\omega=0$) settings. For instance, one could optimize $u$ under the hypothesis that there is an equal chance that the \emph{pessimistic} or the \emph{optimistic} scenario occurs, i.e. $\omega = 1/2$. Note that if $\omega=1$, \eqref{eq:exp_pl_opt} becomes convex. Thus, we focus on $\omega \in [0,1)$.\begin{remark} We can reformulate \eqref{eq:exp_pl_opt} so that it fits the \eqref{eq:min_problem} framework with
\begin{table}[ht] 
\centering
\renewcommand*{\arraystretch}{1.5}
\begin{tabular}{|c||c|}
\hline
   \textbf{elements} & expression  \\
    \hline
    $d$ & $p+1$  \\
    \hline
    $\mathcal{X}$ & $\mathcal{U} \times \mathbb{R}_+$ \\
    \hline
    $f^{(i)}$ & $C_{\text{pen}}\cdot\eta +\omega \cdot\big(\max_{\tilde{i}\in[n]}\,\gamma^{(\tilde{i})}- \langle \beta^{(\tilde{i})},u\rangle\big)+(1-\omega)\cdot\big(\gamma^{(i)}-\langle \beta^{(i)},u\rangle\big)$ \\
    $c^{(j)}$ & $ \langle v^{(j)} ,u \rangle - W^{(j)}-\eta $ \\
    \hline
\end{tabular}
\vspace{7pt}
\caption{\eqref{eq:exp_pl_opt} translated as an instance of \eqref{eq:min_problem}.}
\label{tab:polp_prog}
\end{table}
\end{remark}
\vspace{-25pt}
\subsection*{Data Generation}
\noindent We give now the explicit procedure that produced our (synthetic) data for \eqref{eq:exp_pl_opt}.\\

\begin{itemize}
\item 
For every $i \in [n]$, the data has been generated identically and independently as follows. The \emph{nominal scenario} is elected as follows. We set $\bar{\gamma}=0$ and for every entry $l \in [p]$, $\bar{\beta}_l \sim \text{Uni}([15])$. Then, $\Delta^{(i)}_l$ is drawn with respect to
$$\mathbb{P}[\Delta_l^{(i)}=\delta] = \begin{cases}3/5 & \delta=0  \\ 1/10 & \delta\in\{\pm 1 ,\pm 2\} \\ 0 & \text{otherwise}. \end{cases} $$
Moreover, let $I\in\{1,10\}$ be a scaling parameter, we set $\beta^{(i)} = \max\{\mathbf{0}_p,\bar{\beta} + I\cdot\Delta^{(i)}\}$ and $\gamma^{(i)} \sim \text{Uni}([0,\gamma_{\text{max}}])$ with 
$$\gamma_{\text{max}} := \frac{15}{100}\cdot\Big(\max_{\tilde{i}\in[n]} ||\beta^{(\tilde{i})}||_1-\min_{i\in[n]} ||\beta^{(i)}||_1\Big).$$
\item For every $j\in [m]$, $v^{(j)}\geq \mathbf{0}_{p}$ is a $\iota$-sparse vector with $\iota \sim \text{Uni}(\{\lceil \frac{p}{10}\rceil,\dots,p\})$. Positions at which $v^{(j)}_l >0$ were sampled uniformly at random on $[p]$ and were assigned, uniformly at random, a value from $[10]$. Introducing  parameter $\zeta>0$,\\ we decided to set \begin{equation}
W^{(j)} = \zeta\cdot ||v^{(j)}||_1/p>0.
\label{eq:difficulty_param}
\end{equation}

\item According to the classical investment framework, the search space $\mathcal{U}$ takes the form of the $p$-dimensional simplex of radius $R > 0$, $$\mathcal{U} := \{u \in \mathbb{R}^p\,|\,\langle \mathbf{1}_p, u \rangle = R, \,u\geq \mathbf{0}_p\}.$$
\end{itemize}
Finally, we note that problem \eqref{eq:exp_pl_opt} fulfills Assumption \ref{A1} since $\mathcal{U}$ is bounded. \\

\noindent Our \eqref{eq:exp_pl_opt} problems can be described using seven parameters $(n,m,I,\zeta,p,R,\omega)$.
We have investigated three different settings, namely \emph{nominal} (\emph{hard/easy}) and \emph{big-}$n$, for which we detail the \emph{nominal} value of the six first parameters in Table \ref{tab:num_exp_parameters}.
\\

\begin{table}[ht] 
\centering
\renewcommand*{\arraystretch}{1.5}
\begin{tabular}{c||c|c|c}
\hline
\textbf{dimensions / parameters} & nominal (hard) & nominal (easy) & big-$n$\\
    \hline
$(n,m)$ & \multicolumn{2}{c}{$(5 \cdot 10^2,10^4)$}  & $(10^3,10^3)$\\
\hline
$I$ & $1$ & $10$ & $1$ \\ 
\hline
$\zeta$ & \multicolumn{3}{c}{$\{10^{-1},1,5\}$} \\
\hline
$(p,R)$ &$(3\cdot10^2,10^2)$ &\multicolumn{2}{c}{$(10^2,10)$}\\
    \hline
\end{tabular}
\vspace{5pt}
\caption{Nominal values of the \eqref{eq:exp_pl_opt} parameters.}
\label{tab:num_exp_parameters}
\end{table}
\newpage 
\paragraph*{Benchmark procedure} Throughout the sequel, as we benchmark the three studied algorithms (\texttt{ULO}, \texttt{RAM}, \texttt{ES}) on a new instance problem, we run times methods \texttt{ULO} and \texttt{RAM} $N_{\text{rep}} = 15$ with common $N_{\text{rep}}$ initial pieces $(\hat{i}_1,\dots,\hat{i}_{N_{\text{rep}}}) \in [n]^{N_{\text{rep}}}$. 
We have fixed a time limit of 4 minutes for every repetition. We have implemented a log routine within the algorithms to record tuples $(t,\hat{F},\check{F})$
anytime an upper/lower bound candidate is produced at elapsed time $t>0$. We let \texttt{ES} finish until it solves \eqref{eq:exp_pl_opt} by computing the values of the $n$ convex subproblems $\{\nu^{(i)}\}_{i \in [n]}$, as in \eqref{eq:enumeration_scheme_ref}. For any $i\in [n]$, let $t^{(i)}$ depict the time taken to get $\nu^{(i)}$. Then, it is easy to simulate runs of \texttt{ES} without any further numerical experiment by simply considering any bijection of $\varrho : [n]\to [n]$. Each of these bijections induces a permutation of $[n]$ for which we can collect tuples 
\begin{equation}
(t,\hat{F}) = \Bigg(\sum_{k'=1}^{k}\,t^{(\varrho(k'))}, \min_{k'=1,\dots,k} \, \nu^{(\varrho(k'))} \Bigg) \label{eq:evolution}
\end{equation}
at each iteration $k \in [n]$. Therefore, in what concerns algorithm \texttt{ES}, we could afford to sample $10^3$ permutations acting like as many repetitions. 

\paragraph*{Aggregation} In order to compare the algorithms, we defined $N_{\text{ts}} \in \mathbb{N}$ timestamps $0<\tau^{(1)} <\dots< \tau^{(N_{\text{ts}})}$ corresponding to elapsed wall-clock times. Instead of the aforementioned $(t,\hat{F},\check{F})$ involving various unpredictable times $t$, we search for new values $(\tau,\hat{F}_{\text{new}},\check{F}_{\text{new}})$ that will ressemble the most the actual recorded tuples, so that they share a common temporal basis. Thereby, for every setting tested and for every recorded repetition of a given algorithm, we collected the values $(\hat{F},\check{F})$ obtained as late as possible before each timestamp. That is, we picked values of $\hat{F}$ and $\check{F}$ obtained during that repetition corresponding to the last true time $t \leq \tau$. Accordingly, we set $\hat{F}_{\text{new}}\gets \hat{F}$ (respectively $\check{F}_{\text{new}}\gets \check{F}$). We recall that, for algorithms \texttt{ES} and \texttt{RAM}, $\check{F}=-\infty$ unless every piece has been visited, i.e. $k = n$ in \eqref{eq:evolution}. Finally, we take the mean of these new value traces $(\tau,\hat{F}_{\text{new}},\check{F}_{\text{new}})$ to represent the expected behaviour of \texttt{ULO}, \texttt{RAM} or \texttt{ES}.
\remark We point out one minor drawback of the above procedure. For early timestamps, it could happen that for some repetitions, no tuples were recorded yet. Hence, an averaged guarantee (e.g. a primal gap) plotted for a given algorithm would be based on less than $N_{\text{rep}}$ in such case and, eventually, we could observe a deterioration of that guarantee at the next timestamp if ever the new repetitions coming into play present worse $(\hat{F},\check{F})$ values than the average of the previous timestamp.

\newpage 
\vspace{-50pt}
\begin{figure}[H]%
    \hspace{-50pt}
    \vspace{-20pt}
    \subfloat{{\includegraphics[scale=0.4]{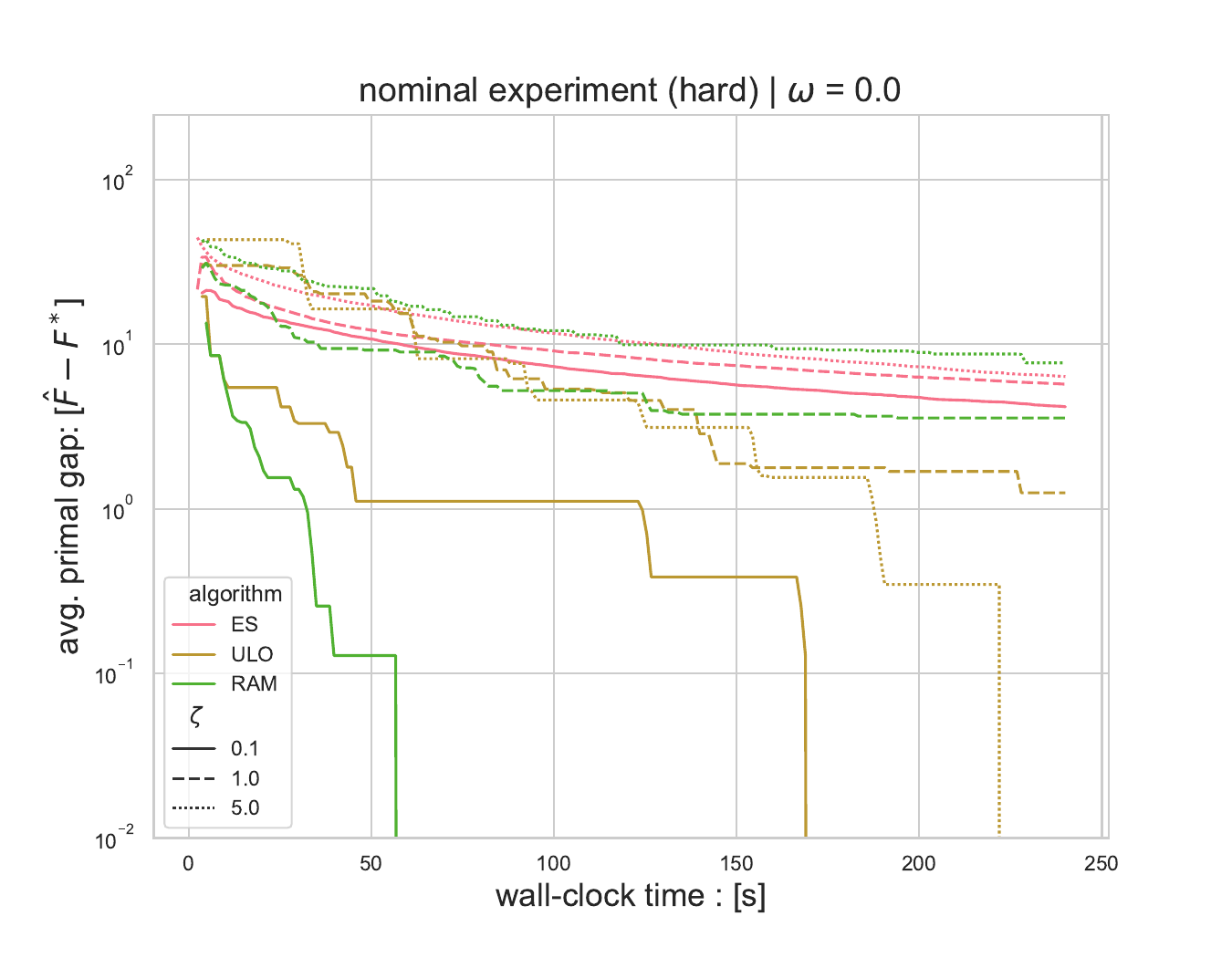} }}%
    \subfloat{{\includegraphics[scale=0.4]{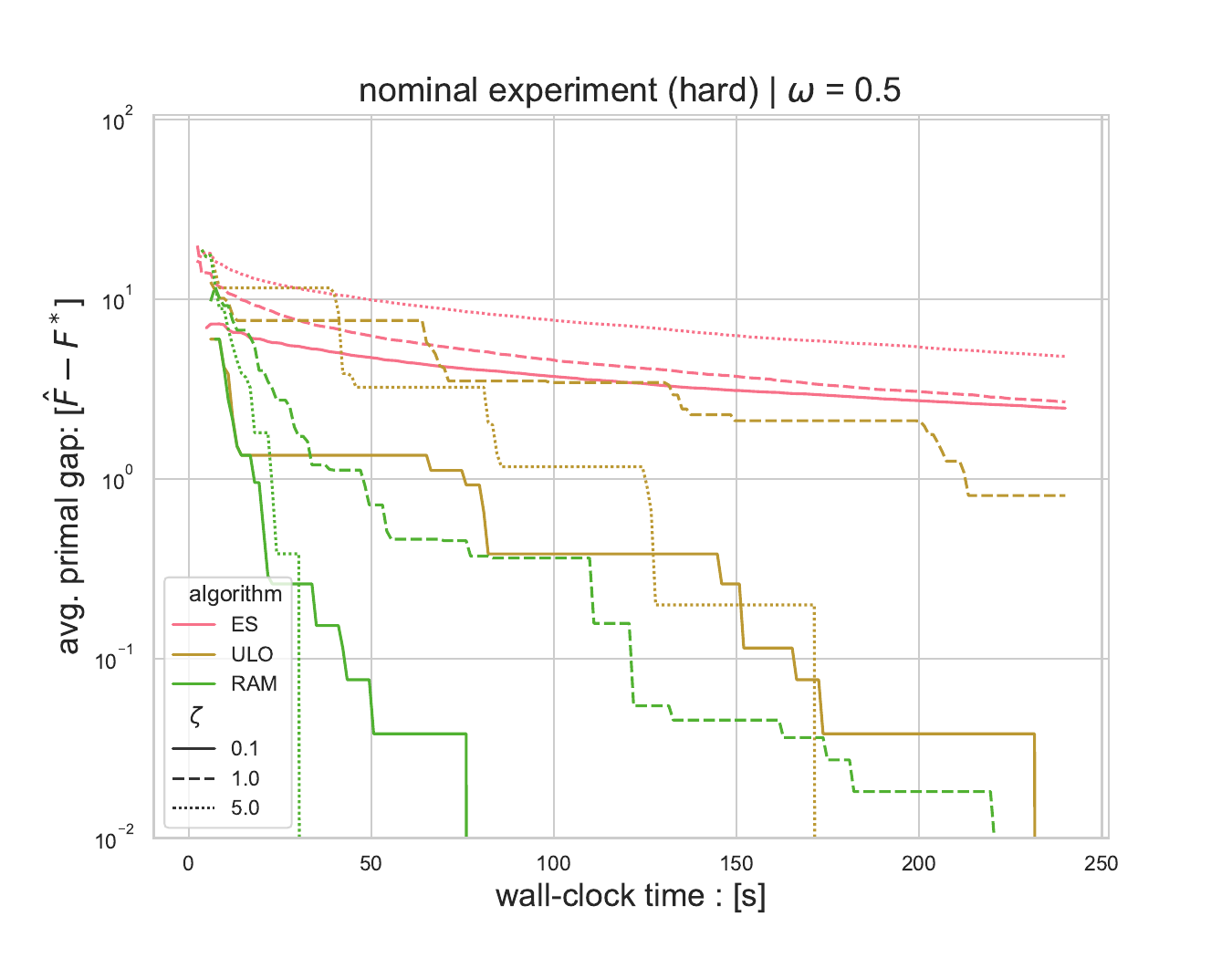} }}%
\label{fig:nominal_experiment}%
\vspace{-20pt}
\end{figure}
\begin{figure}[H]%
    \hspace{-50pt}
    \vspace{-40pt}
    \subfloat{{\includegraphics[scale=0.4]{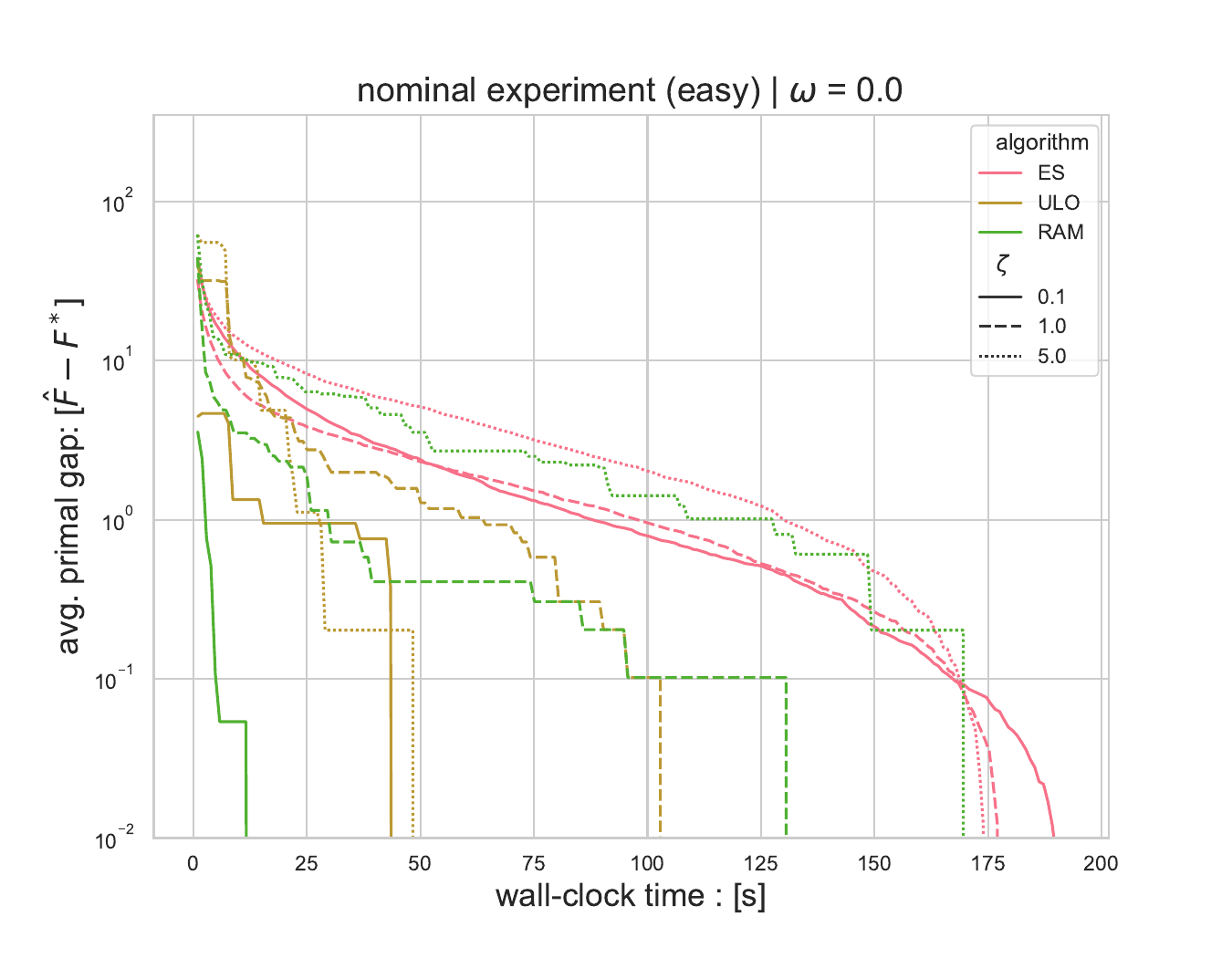} }}%
    \subfloat{{\includegraphics[scale=0.4]{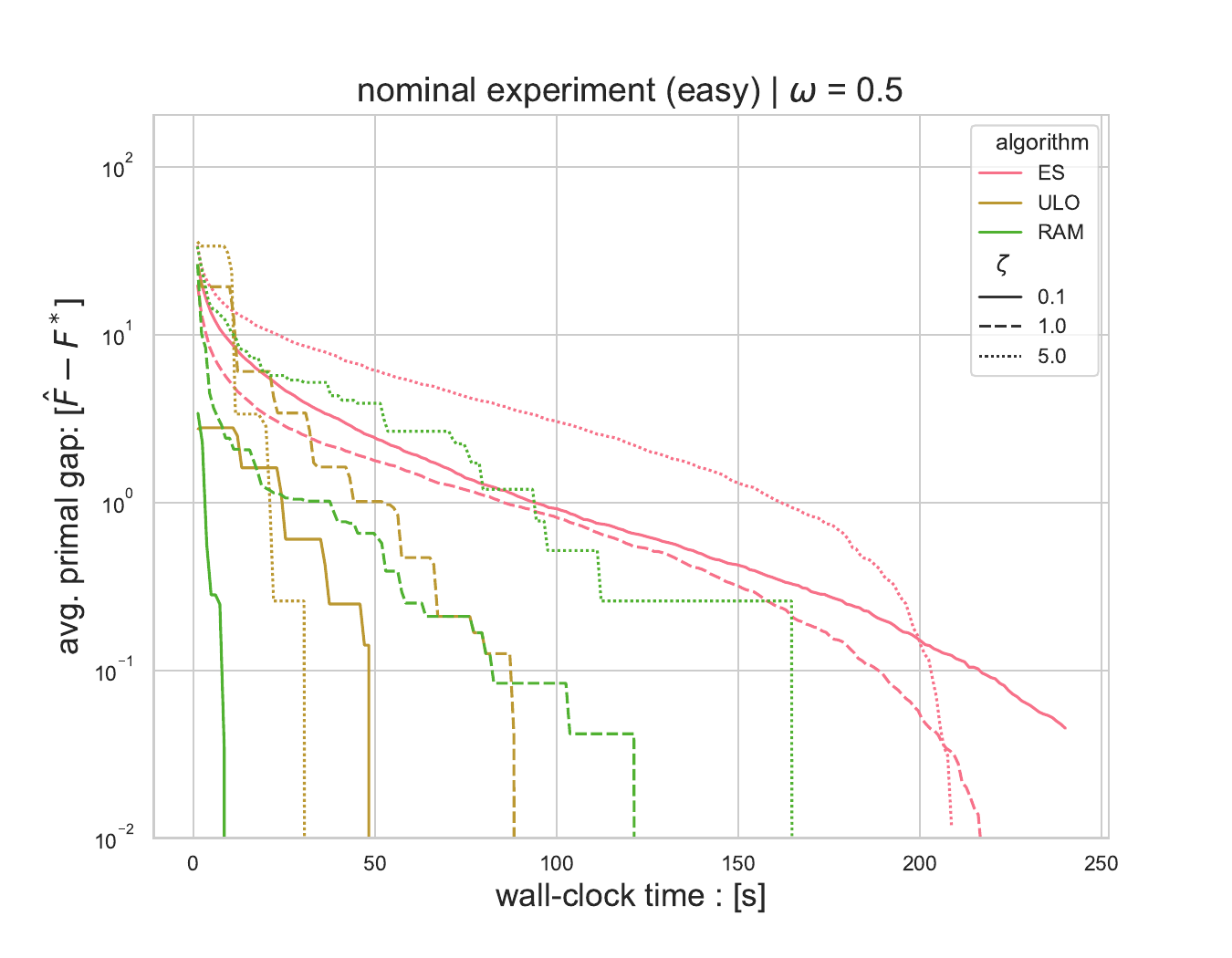} }}%
\label{fig:nominal_experiment2}%
\end{figure}
\begin{figure}[H]%
    \hspace{-50pt}
    \vspace{-30pt}
    \subfloat{{\includegraphics[scale=0.4]{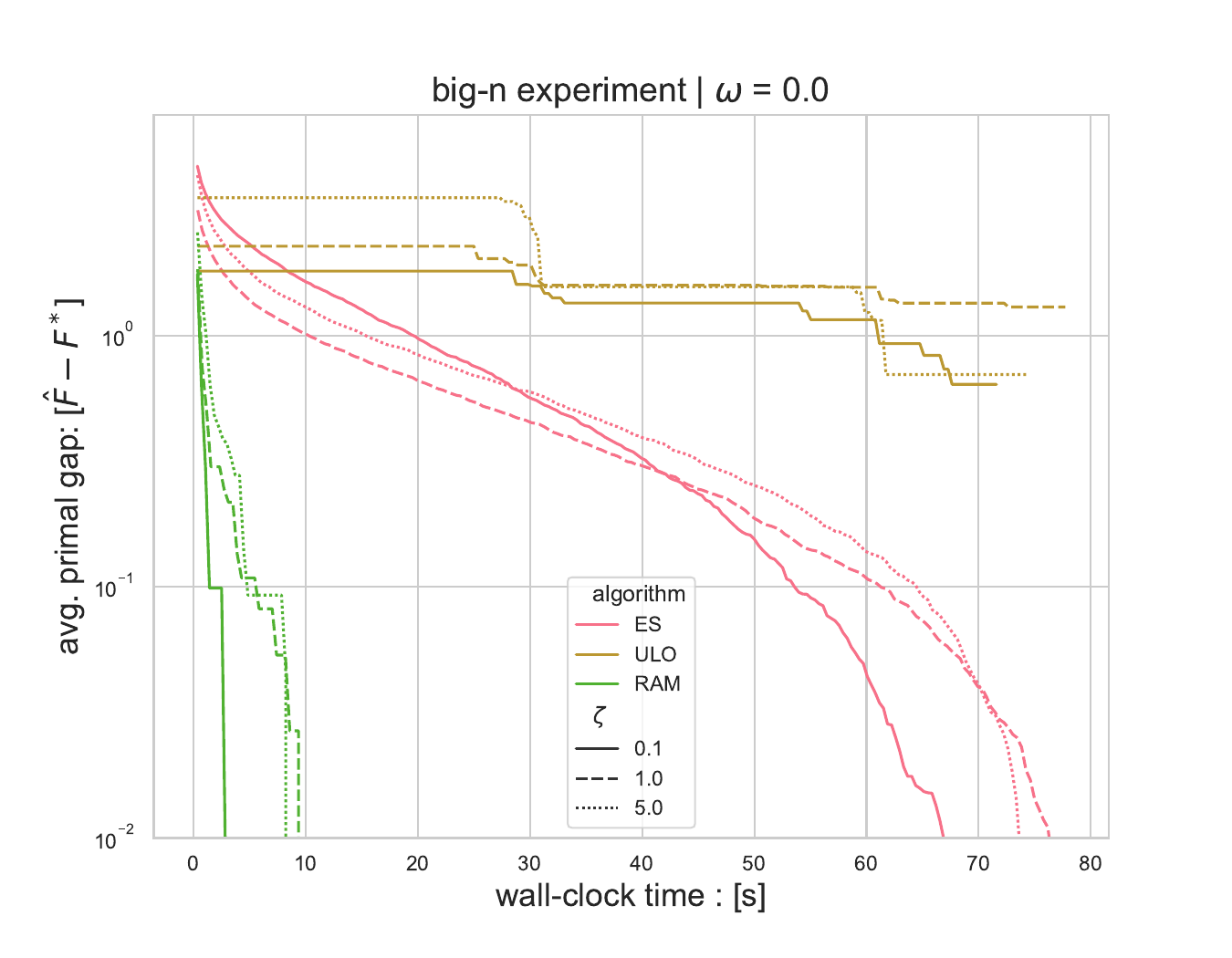} }}%
    \subfloat{{\includegraphics[scale=0.4]{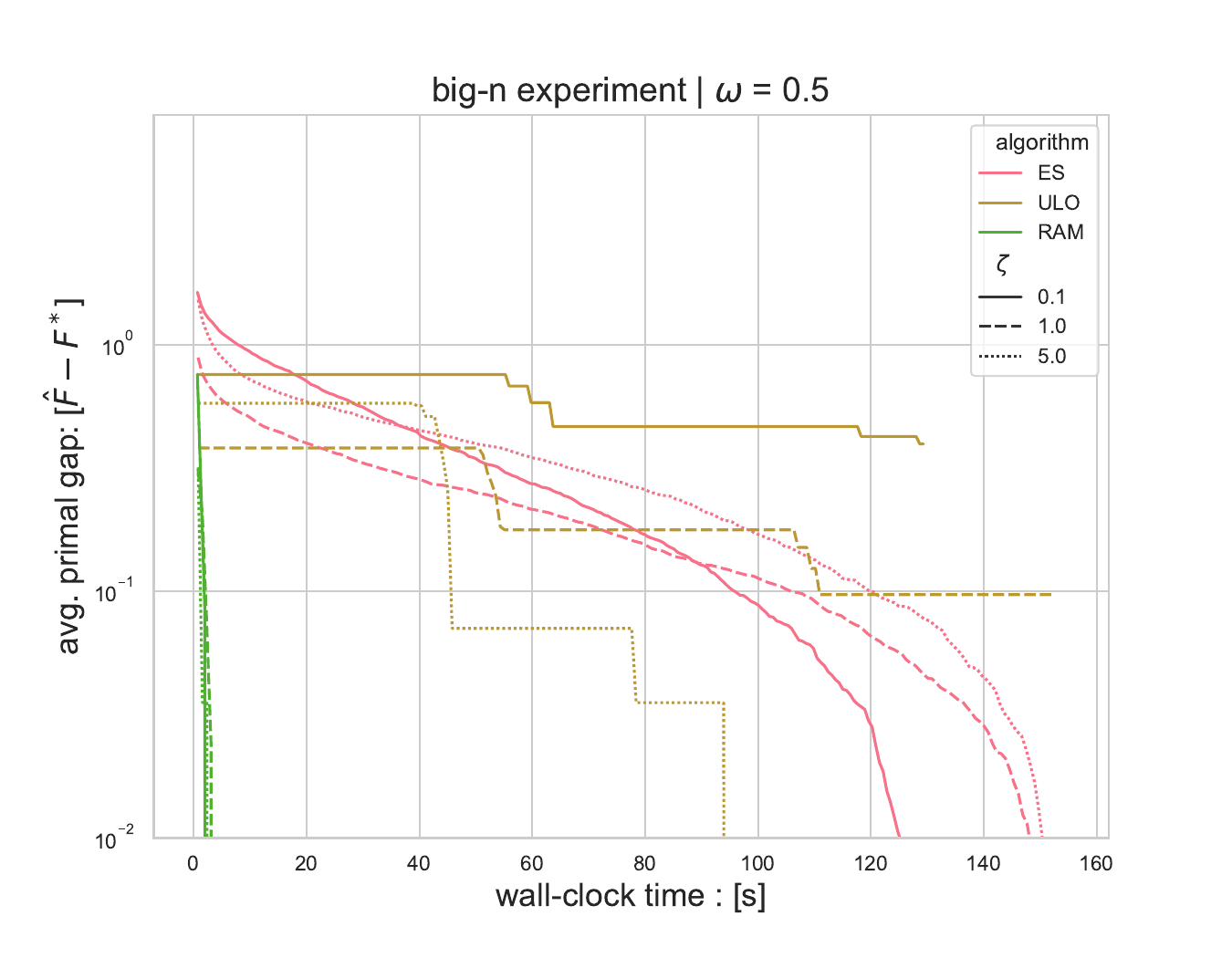} }}%
\vspace{15pt}
\caption{\eqref{eq:exp_pl_opt} experiments: benchmarks of \texttt{ULO}, \texttt{RAM} \& \texttt{ES} strategies.}
\label{fig:bign_experiment}%
\end{figure}

\newpage
\paragraph*{Results} We subdivide the analysis of results in two parts. In the first part, we make generic comments based on raw pictures. In the second, we document \texttt{ULO}'s behaviour. 

\subparagraph*{\underline{Global analysis.}} Graphs in Figure \ref{fig:bign_experiment} clearly showcase the advantages of phase (a) within Algorithm \ref{alg:ulo} (\texttt{ULO}), especially in the most constrained setting, i.e.\@ $\zeta = 10^{-1}$. Indeed, for every investment condition $(v^{(j)},W^{(j)})$ with $j \in [m]$, inequality $\langle v^{(j)},u\rangle \geq 0$ holds for any $u \in \mathcal{U}$. Therefore, with small values of $\zeta$ in \eqref{eq:difficulty_param}, we bound the quantities $\langle v^{(j)},u\rangle$ from above more strictly, reducing the effective feasible domain for variable $u$ and, as a byproduct, the minimizers $x^*(i)$ of several \emph{pieces} might become identical. Yet, their values $\nu^{(i)}$ presumably differ. As a consequence, \emph{pieces} tend to be more connected in the sense of \eqref{eq:graph_E}. Algorithm \texttt{RAM} can fully exploit that structure since it skips phase (b) from \texttt{ULO} to immediately relaunch phase (a) at a new \emph{piece} $\hat{i} \in [n]$ as soon as it gets stuck (possibly at a \emph{local optimum}). In these favorable situations, many starting \emph{pieces} $\hat{i}$ would be on a connected path, according to the underlying DAG $G([n],E)$ representation of problem \eqref{eq:example_pl_opt}, towards a globally optimizing \emph{piece} $i^* \in [n]$, i.e. $F^* = \nu^{(i^*)}$. \texttt{ULO} also profits from this phenomenon but spends a substantial time in phase (b) to obtain lower bounds certificates. Yet, in the \textit{nominal (hard/easy)} experiments, despite its global optimization focus (unlike \texttt{RAM}), \texttt{ULO} sometimes beats \texttt{RAM} and largely outperforms \texttt{ES}. More precisely, \texttt{RAM} is the fastest to reach any level $0<\epsilon \leq 5\cdot 10^{-2}$ of primal gap $\hat{F}-F^*\leq \epsilon$ for the \emph{nominal (hard)} experiment but \texttt{ULO} takes the win in the \emph{nominal (easy)} experiment when $\zeta=1$ and $\zeta=5$.
Phase (b) sometimes turns out to be a burden. Considering settings like the \textit{big-n} experiment wherein $n=m=10^3$, phase (b)'s relative cost becomes prohibitive if several iterations of \texttt{ULO} ($k = 1,\dots,K$) are run with substantial sizes for $S_k$ in Algorithm \ref{alg:ulo}. This is unfortunately the case on the two last graphs of Figure \ref{fig:bign_experiment} but as we explain hereafter, \texttt{ULO} still performs relatively well.

\subparagraph*{\underline{\texttt{ULO} detailed analysis.}}  On Figure \ref{fig:bign_experiment2_details}, one can observe the evolution of lower bounds $\check{F}$ produced by \texttt{ULO} on the go. The two last graphs allow us to nuance our last statements about \texttt{ULO}'s relative failure on experiment \textit{big-n}. Indeed, we can see that if $\tilde{\epsilon} = 5\cdot 10^{-2}$, i.e. $5\%$ relative accuracy is tolerated in Algorithm \ref{alg:ulo}, then \texttt{ULO} exits with optimality guarantees after $63[s]$ on average when $(\omega,\zeta) = (0,1)$ and exits with even better $1\%$ relative accuracy after roughly $90[s]$ when $(\omega,\zeta) = (1/2,5)$. In comparison, \texttt{ES} terminates with its (exact) optimality guarantees after $76[s]$ and $150[s]$ respectively for equivalent settings. In what concerns the \textit{nominal} experiments, upper and lower bounds both progress relatively smoothly with time making it quite predictable to know when \texttt{ULO} will reach a given level of $\tilde{\epsilon}$ relative accuracy.\\

\noindent We end this discussion by commenting on the value of \eqref{eq:exp_pl_opt} with hyper-parameter $\omega$. Although not displayed on Figure \ref{fig:bign_experiment}, one can notice on the first two graphs of Figure \ref{fig:bign_experiment2_details} that the value of $F^*$ on the right-hand side ($\omega=1/2$) is larger than left-hand side's ($\omega=0$). This was of course expected since problem's data was common to both situations and the parameters $(n,m,I,\zeta,p,R)$ were fixed. More interestingly, as $\omega$ grows up to $1$, the weight put on the common max-affine (convex) part of \emph{pieces} (see description of $f^{(i)}$ in Table \ref{tab:polp_prog}) becomes more significant. Eventually, \eqref{eq:exp_pl_opt} becomes a simple linear program and its intrinsic enumerative complexity with $n$ vanishes. We finally underline that \texttt{ULO} did, on average, take less time on the more convex case ($\omega = 1/2$) to make its upper bound $\hat{F}$ meet $F^*$. It took around $220[s]$ for the case $\omega=0$ against $175[s]$ for $\omega=1/2$.
 
\newpage 
\vspace{-50pt}
\begin{figure}[H]%
    \hspace{-50pt}
    \vspace{-20pt}
    \subfloat{{\includegraphics[scale=0.4]{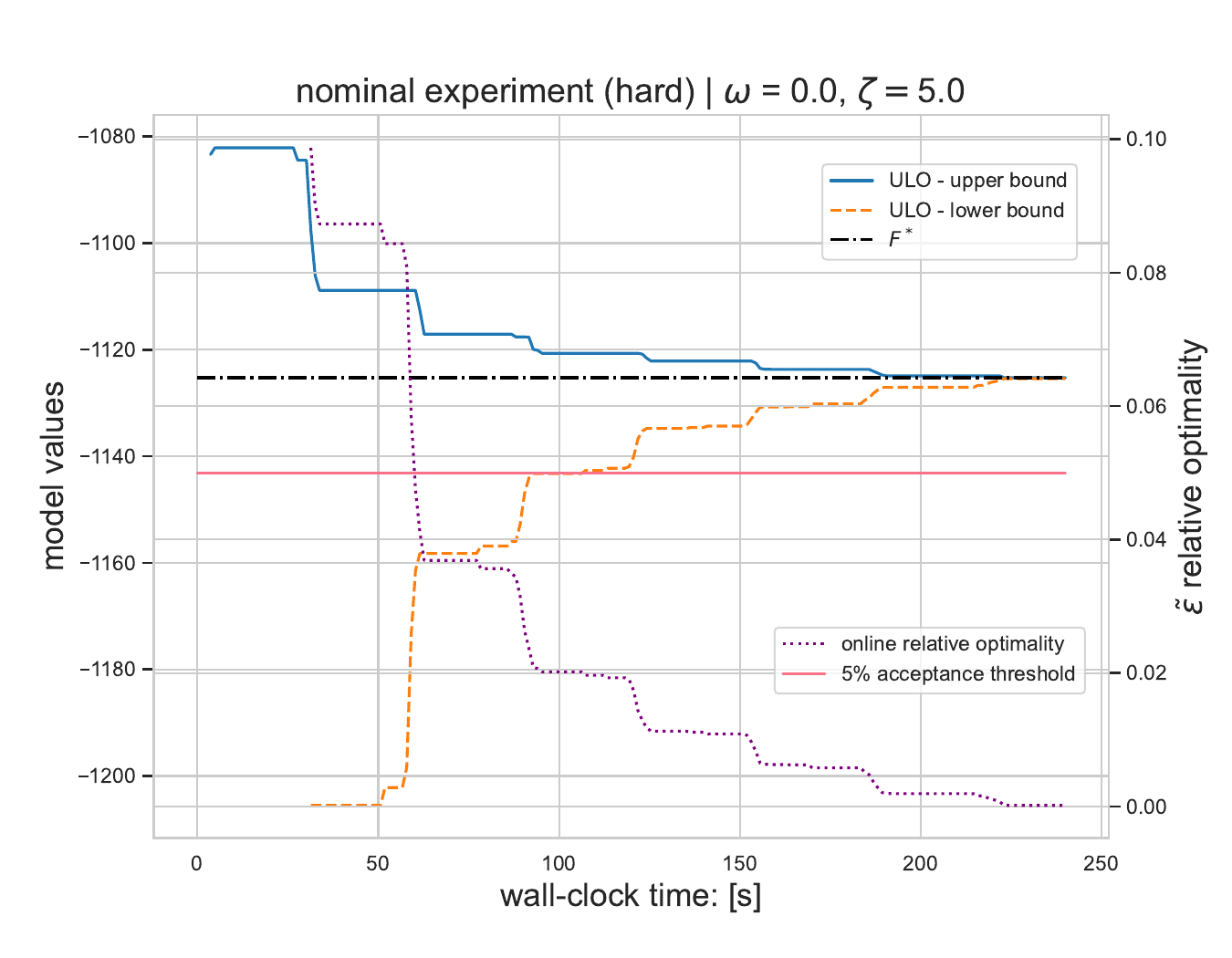} }}%
    \subfloat{{\includegraphics[scale=0.4]{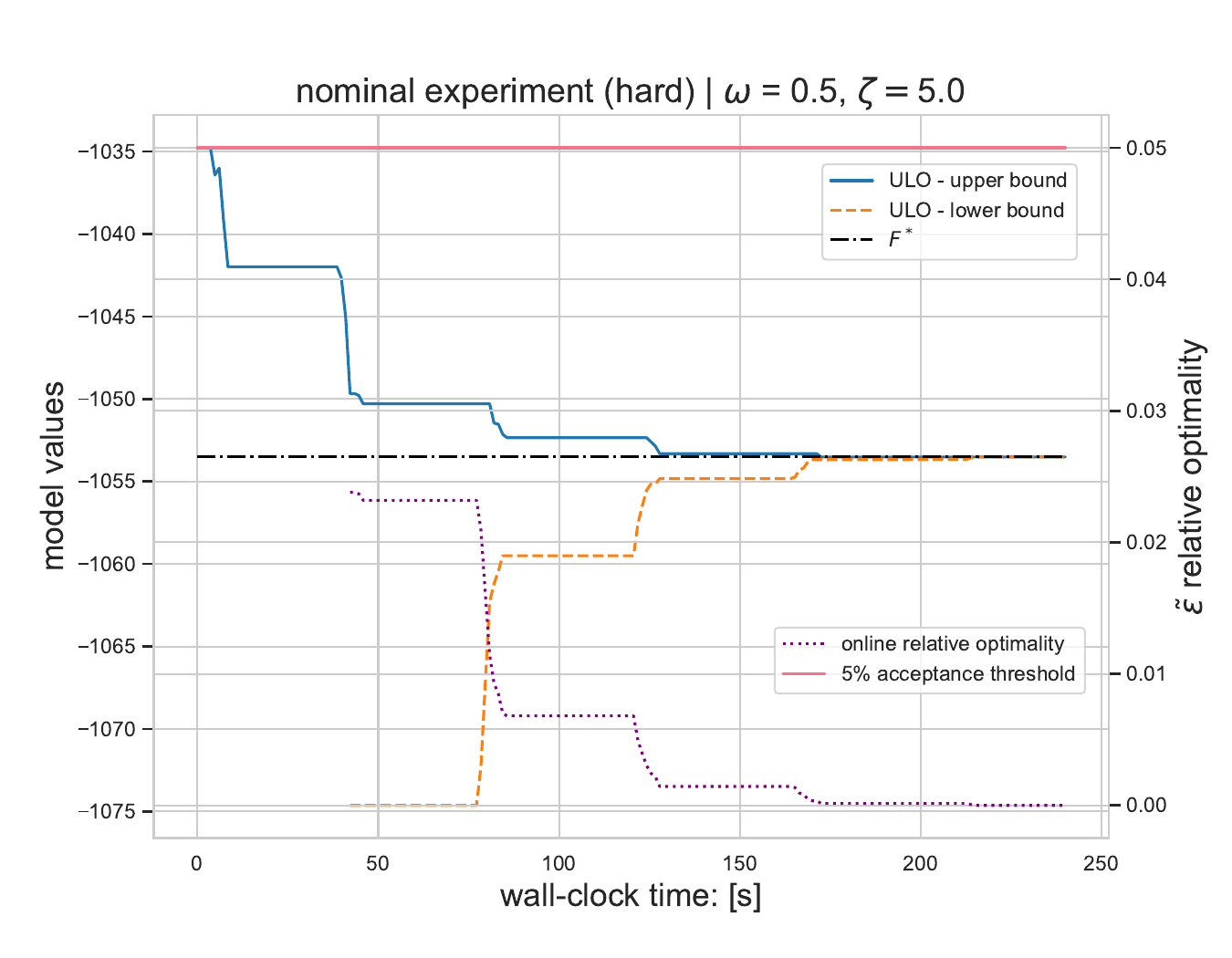} }}%
\label{fig:bign_experiment_details}%
\vspace{-20pt}
\end{figure}
\begin{figure}[H]%
    \hspace{-50pt}
    \vspace{-40pt}
    \subfloat{{\includegraphics[scale=0.4]{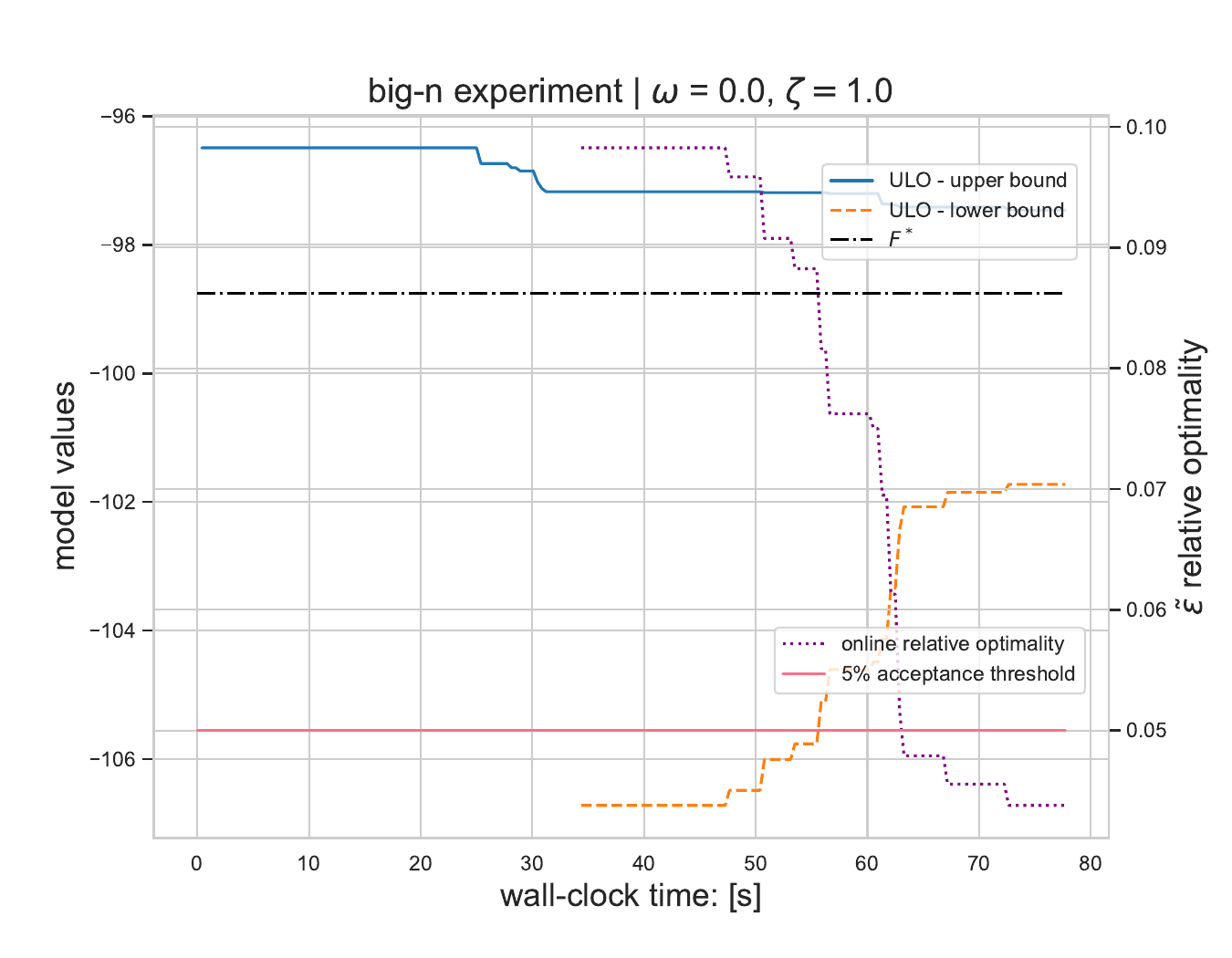} }}%
    \subfloat{{\includegraphics[scale=0.4]{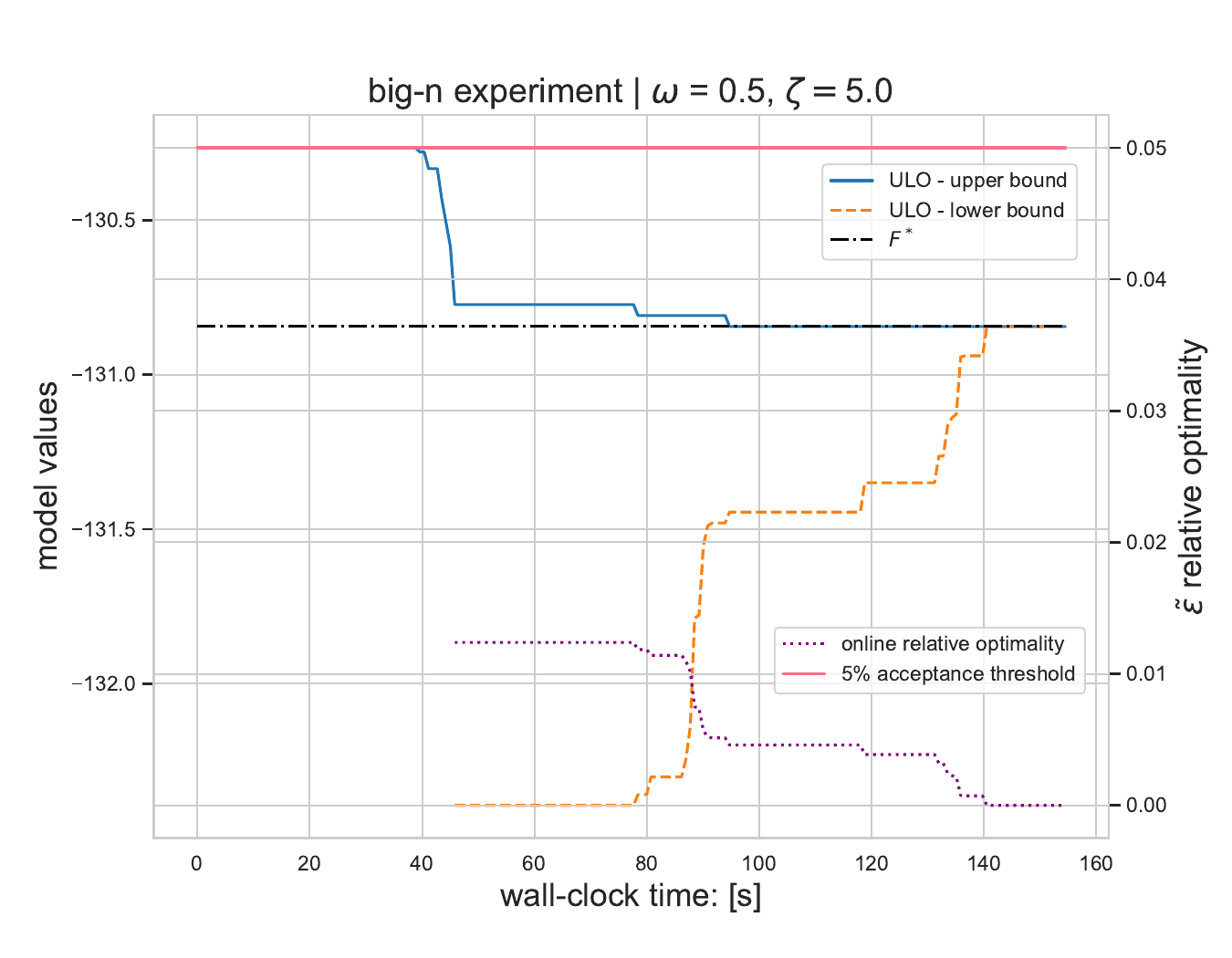} }}%
\vspace{15pt}
\caption{\eqref{eq:exp_pl_opt} experiments: details of \texttt{ULO} optimality certificates.}
\label{fig:bign_experiment2_details}%
\end{figure}

\section{Conclusion}

 In this paper, we have introduced and motivated the framework of \emph{minimum structured} optimization problems, dubbed as \eqref{eq:min_problem}. Although solvable in a straightforward fashion by a simple \texttt{enumeration scheme}, we have shown how global upper/lower models could be of help to significantly reduce its computational complexity. Based on these models, we have devised and analyzed a new global optimization algorithm, \texttt{ULO}, for which exactness guarantees are provided. We have first conducted theoretical experiments to assess when \texttt{ULO} should exhibit a better performances than the \texttt{enumeration scheme}. Then we have also tested \texttt{ULO} and another local-search heuristic, on a practical optimization task. The results are very convincing, \texttt{ULO} is definitely the algorithm of choice for instances of \eqref{eq:min_problem} linked to bigger ratios $m/n$, as is typically the case for \emph{optimistic robust} linear programs (Example \ref{example:orlp}). We leave the possible parallelization of phase (a) and (b) in \texttt{ULO} with asynchronous communication steps for further research, which could enhance even more its practical performance.

\newpage 

\vspace{-5pt}
\section*{Disclosure statement}
\vspace{-10pt}
No potential conflict of interest was reported by the author(s).\vspace{-10pt}
\section*{Funding}
\vspace{-10pt}
\noindent 
Guillaume Van Dessel is funded by the UCLouvain university as a teaching assistant. 
\vspace{-17pt}
\bibliographystyle{abbrv}
\bibliography{references.bib}

\begin{thebibliography}{10}

\bibitem{Bagirov22}
A.~Bagirov, S.~Taheri, N.~Karmitsa, N.~Sultanova, and S.~Asadi.
\newblock Robust piecewise linear l1-regression via nonsmooth dc optimization.
\newblock {\em Optimization Methods and Software}, 37(4):1289--1309, 2022.

\bibitem{Balas18}
E.~Balas.
\newblock {\em Disjunctive Programming}.
\newblock Springer Nature Switzerland AG, 2018.

\bibitem{Barratt20}
S.~Barratt, G.~Angeris, and S.~Boyd.
\newblock Minimizing a sum of clipped convex functions.
\newblock {\em Optimization Letters}, 14:2443--2459, 2020.

\bibitem{Bertsimas09}
D.~Bertsimas, O.~Nohadani, and M.~T. Kwong.
\newblock Robust optimization for unconstrained simulation-based problems.
\newblock {\em Acta Numerica}, 58(1):161--178, 2009.

\bibitem{Ding24}
L.~Ding, Z.~Chen, X.~Wang, and W.~Yin.
\newblock Efficient algorithms for sum-of-minimum optimization.
\newblock In R.~Salakhutdinov, Z.~Kolter, K.~Heller, A.~Weller, N.~Oliver,
  J.~Scarlett, and F.~Berkenkamp, editors, {\em Proceedings of the 41st
  International Conference on Machine Learning}, volume 235 of {\em Proceedings
  of Machine Learning Research}, pages 10927--10959. PMLR, 21--27 Jul 2024.

\bibitem{Ho21}
V.~T. Ho, H.~A. {Le Thi}, and T.~{Pham Dinh}.
\newblock {DCA}-based algorithms for {DC} fitting.
\newblock {\em Journal of Computational and Applied Mathematics}, 389:113353,
  2021.

\bibitem{Schulze87}
A.~Kripfganz and R.~Schulze.
\newblock Piecewise affine functions as a difference of two convex functions.
\newblock {\em Optimization}, 18(1):23--29, 1987.

\bibitem{NSNC20}
J.~Li, A.~M.-C. So, and W.-K. Ma.
\newblock Understanding notions of stationarity in nonsmooth optimization: A
  guided tour of various constructions of subdifferential for nonsmooth
  functions.
\newblock {\em IEEE Signal Processing Magazine}, 37(5):18--31, 2020.

\bibitem{Liu18}
T.-Y. Liu and H.~Jiang.
\newblock Minimizing sum of truncated convex functions and its applications.
\newblock {\em Journal of Computational and Graphical Statistics}, 28(1):1--10,
  2019.

\bibitem{LY21}
D.~G. Luenberger and Y.~Ye.
\newblock {\em Linear and Nonlinear Programming}.
\newblock Springer, 2021.

\bibitem{Nemi08}
A.~S. Nemirovski and M.~J. Todd.
\newblock Interior-point methods for optimization.
\newblock {\em Acta Numerica}, 17:191--234, 2008.

\bibitem{Arkadi94}
Y.~Nesterov and A.~Nemirovskii.
\newblock {\em Interior-Point Polynomial Algorithms in Convex Programming}.
\newblock Society for Industrial and Applied Mathematics, 1994.

\bibitem{Norton2017}
M.~Norton, A.~Takeda, and A.~Mafusalov.
\newblock Optimistic robust optimization with applications to machine learning.
\newblock {\em ArXiv}, abs/1711.07511, 2017.

\bibitem{Pansari17}
P.~Pansari and M.~Kumar.
\newblock Truncated max-of-convex models.
\newblock In {\em 2017 IEEE Conference on Computer Vision and Pattern
  Recognition (CVPR)}, pages 664--672, Los Alamitos, CA, USA, jul 2017. IEEE
  Computer Society.

\bibitem{Aras22}
A.~Selvi, A.~Ben-Tal, R.~Brekelmans, and D.~den Hertog.
\newblock Convex maximization via adjustable robust optimization.
\newblock {\em INFORMS Journal on Computing}, 34(4):2091--2105, 2022.

\bibitem{plmr20}
A.~Siahkamari, A.~Gangrade, B.~Kulis, and V.~Saligrama.
\newblock Piecewise linear regression via a difference of convex functions.
\newblock In H.~D. III and A.~Singh, editors, {\em Proceedings of the 37th
  International Conference on Machine Learning}, volume 119 of {\em Proceedings
  of Machine Learning Research}, pages 8895--8904. PMLR, 13--18 Jul 2020.

\bibitem{Sinha16}
A.~Sinha, P.~Malo, and K.~Deb.
\newblock Solving optimistic bilevel programs by iteratively approximating
  lower level optimal value function.
\newblock In {\em 2016 IEEE Congress on Evolutionary Computation (CEC)}, pages
  1877--1884, 2016.

\bibitem{Spielman04}
D.~A. Spielman and S.-H. Teng.
\newblock Smoothed analysis of algorithms: Why the simplex algorithm usually
  takes polynomial time.
\newblock {\em J. ACM}, 51(3):385?463, may 2004.

\bibitem{Wong11}
E.~Wong.
\newblock {\em Active-Set Methods for Quadratic Programming}.
\newblock Doctoral thesis, University of California, 2011.

\bibitem{Wytock16}
M.~Wytock, S.~Diamond, F.~Heide, and S.~Boyd.
\newblock A new architecture for optimization modeling frameworks.
\newblock In {\em 2016 6th Workshop on Python for High-Performance and
  Scientific Computing (PyHPC)}, pages 36--44, 2016.

\bibitem{Zuo23}
X.~Zuo and Y.~Jiang.
\newblock Solution methodologies for minimizing a sum of pointwise minima of
  two functions.
\newblock {\em Optimization Letters}, 17:75--87, 2023.

\end{thebibliography}

\end{document}